\newcommand{\al}{\alpha}
\newcommand{\be}{\beta}
\newcommand{\ga}{\gamma}
\newcommand{\de}{\delta}
\newcommand{\la}{\lambda}
\newcommand{\om}{\omega}
\newcommand{\eps}{\varepsilon}
\newcommand{\vv}{\varphi}
\theoremstyle{plain}
\numberwithin{equation}{section}
\newtheorem{thm}{Theorem}[section]
\newtheorem{lem}[thm]{Lemma}
\newtheorem{prop}[thm]{Proposition}
\newtheorem{cor}[thm]{Corollary}
\theoremstyle{definition}
\newtheorem{ip}[thm]{Inverse Problem}
\theoremstyle{remark}
\newtheorem{remark}[thm]{Remark}
\DeclareMathOperator*{\Res}{Res}
\begin{document}

\begin{center}
{\Large\bf Necessary and sufficient conditions for solvability of \\[0.2cm] an inverse problem for higher-order differential operators}
\\[0.5cm]
{\bf Natalia P. Bondarenko}
\end{center}

\vspace{0.5cm}

{\bf Abstract.} We consider an inverse spectral problem that consists in the recovery of the differential expression coefficients for higher-order operators with separated boundary conditions from the spectral data (eigenvalues and weight numbers). This paper is focused on the most principal issue of the inverse spectral theory, namely, on the necessary and sufficient conditions for the solvability of the inverse problem. In the framework of the method of the spectral mappings, we consider the linear main equation of the inverse problem and prove the unique solvability of this equation in the self-adjoint case. The main result is obtained for the first-order system of general form and so can be applied to higher-order differential operators with regular and distribution coefficients. From the theorem on the main equation solvability, we deduce the necessary and sufficient conditions on the spectral data for a class of arbitrary order differential operators with distribution coefficients. As a corollary of our general results, we obtain the characterization of the spectral data for the fourth-order differential equation in terms of asymptotics and simple structural properties. 

\medskip

{\bf Keywords:} inverse spectral problem; higher-order differential operators; distribution coefficients; method of spectral mappings; necessary and sufficient conditions.

\medskip

{\bf AMS Mathematics Subject Classification (2020):} 34A55 34B05 34B09 34L05 46F10
  
\vspace{1cm}

\section{Introduction} \label{sec:intr}

This paper is concerned with inverse spectral problems for differential equations of form
\begin{align} \nonumber
\ell_n(y) := & y^{(n)} + \sum_{k = 0}^{\lfloor n/2\rfloor - 1} (\tau_{2k}(x) y^{(k)})^{(k)} \\ \label{eqv} + & \sum_{k = 0}^{\lfloor (n-1)/2\rfloor - 1}  \bigl((\tau_{2k+1}(x) y^{(k)})^{(k+1)} + (\tau_{2k+1}(x) y^{(k+1)})^{(k)}\bigr) = \la y, \: x \in (0,1),
\end{align}
where $n \ge 2$, the notation $\lfloor a \rfloor$ means rounding a real number $a$ down,  the coefficients $\{ \tau_{\nu} \}_{\nu = 0}^{n-2}$ in general can be generalized functions (distributions), the functions $i^{n + \nu} \tau_{\nu}$ are assumed to be real-valued, $\la$ is the spectral parameter. 

We investigate the recovery of the coefficients $\{ \tau_{\nu} \}_{\nu = 0}^{n-2}$ from the eigenvalues $\{ \la_{l,k} \}_{l \ge 1}$ and the weight numbers $\{ \be_{l,k} \}_{l \ge 1}$ of the boundary value problems $\mathcal L_k$, $k = \overline{1,n-1}$, for equation \eqref{eqv} with the separated boundary conditions 
\begin{equation} \label{bc}
y^{[j-1]}(0) = 0, \quad j = \overline{1,k}, \qquad
y^{[s-1]}(1) = 0, \quad s = \overline{1,n-k}.
\end{equation}
Thus, the problem $\mathcal L_k$ has $k$ boundary conditions at $x = 0$ and $(n-k)$ boundary conditions at $x = 1$. The quasi-derivatives $y^{[j]}$ and the weight numbers $\be_{l,k}$ will be rigorously defined in Section~\ref{sec:prelim}.

Our spectral data $\{ \la_{l,k}, \be_{l,k} \}_{l \ge 1, \, k = \overline{1,n-1}}$ generalize the spectral data $\{ \la_l, \be_l \}_{l \ge 1}$ of the classical Sturm-Liouville problem
\begin{gather} \label{StL}
-y'' + q(x) y = \la y, \quad x \in (0, 1), \\ \label{bcStL}
y(0) = y(1) = 0.
\end{gather}
Here $\{ \la_l \}_{l \ge 1}$ are the eigenvalues of the boundary value problem \eqref{StL}--\eqref{bcStL} and $\be_l := \left( \int_0^1 y_l^2(x) \, dx \right)^{-1}$, where $\{ y_l(x) \}_{l \ge 1}$ are the eigenfunctions normalized by the condition $y_l'(0) = 1$. Inverse problems for the second-order Sturm-Liouville equation \eqref{StL} have been studied fairly completely (see the monographs \cite{Mar77, Lev84, PT87, FY01, Krav20} and references therein). In particular, it is well-known that the potential $q(x)$ can be uniquely reconstructed from the spectral data $\{ \la_l, \be_l \}_{l \ge 1}$ by the method of Gelfand and Levitan \cite{GL51}. Nevertheless, this method appeared to be ineffective for higher-order differential equations
\begin{equation} \label{eqvp}
y^{(n)} + \sum_{s = 0}^{n-2} p_s(x) y^{(s)} = \la y, \quad x \in (0,1).
\end{equation}

The inverse problem by the spectral data $\{ \la_{l,k}, \be_{l,k} \}_{l \ge 1, \, k = \overline{1,n-1}}$ for equation \eqref{eqvp}
with the coefficients $p_s \in C^s[0,1]$, $s = \overline{0,n-2}$,
has been introduced by Leibenzon \cite{Leib66}, who has proved the uniqueness of its solution. In \cite{Leib71, Leib72}, Leibenzon developed a constructive method for finding the solution and obtained the necessary and sufficient conditions for the solvability of the inverse problem. However, Yurko \cite{Yur02} has shown that the spectral data of Leibenzon uniquely specify the coefficients $\{ p_s \}_{s = 0}^{n-2}$ only under the so-called \textit{separation condition}: the eigenvalues of any two neighboring problems $\mathcal L_k$ and $\mathcal L_{k+1}$ must be distinct. Furthermore, Yurko introduced another spectral characteristics, which is now called \textit{the Weyl-Yurko matrix}. It generalizes the spectral data $\{ \la_{l,k}, \be_{l,k} \}_{l \ge 1, \, k = \overline{1,n-1}}$ of Leibenzon and uniquely determines the higher-order differential operators in the general case, without any restrictions on their spectra. As a result, Yurko has created the inverse problem theory for equation \eqref{eqvp} with $p_s \in W_2^{s + \nu}$, $s = \overline{0,n-2}$, $\nu \ge 0$, on a finite interval and on the half-line (see \cite{Yur92, Yur00, Yur02}). The inverse scattering problem on the line requires a different approach (see \cite{Beals85, Beals88}). We also mention that inverse spectral problems for higher-order differential operators in other statements were considered in \cite{Bar74, Khach76, McK81, McL86, PK97, CPS98, Glad05, BK15, BK21, PB20} and other studies.

Generalizations of the inverse spectral problems by the Weyl-Yurko matrix and by the spectral data $\{ \la_{l,k}, \be_{l,k} \}_{l \ge 1, \, k = \overline{1,n-1}}$ to the higher-order differential operators with distribution coefficients have been investigated by Bondarenko \cite{Bond21, Bond22-alg, Bond23-mmas, Bond23-reg, Bond23-loc, Bond23-res}. In particular, the papers \cite{Bond21, Bond23-mmas, Bond23-reg} are mostly focused on uniqueness theorems. In \cite{Bond22-alg}, nonlinear inverse problems have been reduced to the linear equation
\begin{equation} \label{maineq}
(I - \tilde R(x)) \psi(x) = \tilde \psi(x),
\end{equation}
in the Banach space $m$ of bounded infinite sequences. Equation \eqref{maineq} is considered for each fixed $x \in [0,1]$. Here $\psi(x), \tilde \psi(x) \in m$, $\tilde R(x)$ is a compact operator and $I$ is the identity operator in $m$. The vector $\tilde \psi(x)$ and the operator $\tilde R(x)$ are constructed by using the given spectral data, while the unknown vector $\psi(x)$ is related to the coefficients of equation \eqref{eqv}. The further details regarding the main equation \eqref{maineq} can be found in Section~\ref{sec:main}. 

For existence of the inverse problem solution, the solvability of equation \eqref{maineq} is crucial. However, this issue is very difficult for investigation. Leibenzon \cite{Leib72} and Yurko \cite{Yur00, Yur02} in their studies imposed the requirement of the existence of a bounded inverse operator $(I - \tilde R(x))^{-1}$. But, in general, it is difficult to verify this requirement. The only relatively simple situation is the case of small $\| R(x) \|$. This case of local solvability was considered in \cite{Yur00, Yur02} for regular coefficients and in \cite{Bond23-loc} for distributions. For $n = 2$, the unique solvability of the main equation \eqref{maineq} can be proved in the self-adjoint case. Recently, it has been proved for $n = 3$ (see \cite{Bond23-res}). Nevertheless, to the best of the author's knowledge, there were no such results for $n > 3$ even in the case of regular coefficients. This paper aims to study the solvability of the main equation \eqref{eqv} in the self-adjoint case for arbitrary even and odd orders $n$ and to obtain necessary and sufficient conditions on the spectral data $\{ \la_{l,k}, \be_{l,k} \}_{l \ge 1\, k = \overline{1,n-1}}$ for equation \eqref{eqv} with distribution coefficients. 

It is worth mentioning that boundary value problems for linear differential equations of form \eqref{eqv} and inverse spectral problems for them appear in various applications. The majority of applications deal with $n = 2$.
Direct and inverse Sturm-Liouville problems arise in classical and quantum mechanics, geophysics, material science, acoustics, and other branches of science and engineering (see, e.g., the references in \cite{FY01, Krav20}). 
The third-order differential equations are applied to describing elastic beam vibrations \cite{Greg87} and thin membrane flow of viscous liquid \cite{BP96}. Linear differential operators of orders $n = 4$ and $n = 6$ arise in geophysics \cite{Bar74} and in vibration theory \cite{Glad05, MZ13}. Furthermore, characterization of the spectral data has a fundamental significance for spectral theory of linear differential operators.

In order to deal with equation \eqref{eqv}, we apply the regularization approach, which has been developed in \cite{MS16} and a number of subsequent studies. Namely, we reduce equation \eqref{eqv} to the first-order system
\begin{equation} \label{sys}
Y'(x) = (F(x) + \Lambda) Y(x), \quad x \in (0, 1),
\end{equation}
where $Y(x)$ is a column-vector function related to $y(x)$, $\Lambda$ is the $(n \times n)$-matrix whose entry at the position $(n,1)$ equals $\la$ and all the other entries equal zero, the matrix function $F(x) = [f_{k,j}(x)]_{k,j = 1}^n$ with integrable entries is the so-called associated matrix, which is constructed by the coefficients $\{ \tau_{\nu} \}_{\nu = 0}^{n-2}$ of the differential expression $\ell_n(y)$ in a special way. 

Constructions of associated matrices for various classes of differential operators were obtained in \cite{EM99, MS16, MS19, Vlad04, Vlad17, VNS21, KMS23, Bond23-reg} and other studies. As an example, consider equation \eqref{eqv} for $n = 2$:
\begin{equation} \label{StL1}
y'' + \tau_0 y = \la y. 
\end{equation}
Suppose that $\tau_0 \in W_2^{-1}[0,1]$, that is, $\tau_0 = \sigma'$, where $\sigma$ is some function of $L_2[0,1]$. Then, equation \eqref{StL1} can be reduced to the $(2 \times 2)$ system \eqref{sys} with the following associated matrix (see \cite{SS03}):
\begin{equation} \label{F2}
F(x) = \begin{bmatrix}
            -\sigma(x) & 1 \\
            -\sigma^2(x) & \sigma(x)
       \end{bmatrix}.
\end{equation}

In this paper, we consider the main equation \eqref{maineq} constructed for the first-order system \eqref{sys} in the general form. We formulate sufficient conditions for the unique solvability of the main equation in Theorem~\ref{thm:inverse}. Our conditions include the asymptotics, some structural properties of the spectral data, and also the self-adjointness requirements. The proof of Theorem~\ref{thm:inverse} is based 
on construction and contour integration of some functions, which are meromorphic in the complex plane of the spectral parameter. The obtained contour integrals, on the one hand, can be estimated as the radii of the contours tend to infinity. On the other hand, they can be calculated by the Residue Theorem. Although the idea of the proof arises from the cases $n = 2$ and $n = 3$, which were considered in \cite{FY01, Yur02, Bond21-tamkang} and \cite{Bond23-res}, respectively, the solvability of the main equation for $n > 3$ is a fundamentally novel result and the proofs require several new constructions. 

Since Theorem~\ref{thm:inverse} is obtained for the system \eqref{sys} in the general form, then it can be applied to different classes of differential operators with distribution as well as integrable coefficients. Nevertheless, it is important to note that in general the matrix function $F(x)$ cannot be uniquely recovered from the spectral data (see Example 1 in \cite{Bond23-mmas}). Therefore, in the next part of this paper, we consider the system \eqref{sys} associated with equation \eqref{eqv}, in which $\tau_{\nu} \in W_2^{\nu-1}[0,1]$ and $i^{n + \mu} \tau_{\nu}$ are real-valued functions for $\nu = \overline{0,n-2}$. For this class of operators, in \cite{Bond23-loc} the coefficients of the differential expression $\ell_n(y)$ have been reconstructed as some series by using the solution $\psi(x)$ of the main equation \eqref{maineq}.
Moreover, the convergence of those series have been proved in the corresponding functional spaces, including the space $W_2^{-1}[0,1]$ of generalized functions. Theorem~\ref{thm:inverse} together with the results of \cite{Bond23-loc} readily imply the solvability conditions of the inverse spectral problem (Theorem~\ref{thm:nsc}). For $n = 2, 3$, our conditions coincide with the previously known results (see \cite{HM03, Bond23-res}). For $n = 4$, we obtain a novel theorem (Theorem~\ref{thm:nsc4}), which completely characterizes the corresponding spectral data in terms of the asymptotics recently derived in \cite{Bond23-asympt4} and structural properties.

The paper is organized as follows. In Section~\ref{sec:prelim}, we define the spectral data $\{\la_{l,k}, \be_{l,k} \}_{l \ge 1, \, k = \overline{1,n-1}}$ for the system \eqref{sys} and provide other preliminaries. In Section~\ref{sec:sa}, the self-adjoint case is considered and some necessary conditions on the spectral data are established. In Section~\ref{sec:main}, we provide the construction of the main equation \eqref{main} from \cite{Bond22-alg} and discuss some useful properties of the functions, which participate in the main equation. Section~\ref{sec:inverse} contains Theorem~\ref{thm:inverse} on the sufficient conditions for the unique solvability of the main equation together with its proof. In Section~\ref{sec:ip}, we apply Theorem~\ref{thm:inverse} to the inverse problem for equation \eqref{eqv} with $\tau_{\nu} \in W_2^{\nu - 1}[0,1]$. Section~\ref{sec:ex} contains examples for $n = 2, 3, 4$.

\smallskip

Throughout the paper, we use the following \textbf{notations}:

\begin{enumerate}
\item $\de_{j,k}$ is the Kronecker delta. 
\item In estimates, the same symbol $C$ is used for various positive constants which do not depend on $x$, $\lambda$, $l$, etc.
\item $F^0(x) \equiv [\de_{k+1,j}]_{k,j = 1}^n$.
\item Along with $F(x)$, we consider matrix functions $\tilde F(x)$, $F^{\star}(x)$, $\tilde F^{\star}(x)$, $F^0(x)$. If a symbol $\gamma$ denotes an object related to $F(x)$, then the notations $\tilde \ga$, $\gamma^{\star}$, $\tilde \ga^{\star}$, $\ga^0$ are used for the similar object related to $\tilde F(x)$, $F^{\star}(x)$, $\tilde F^{\star}(x)$, $F^0(x)$, respectively.
\item The notation $a_{\langle k \rangle}(\la_0)$ is used for the $k$-th coefficient of the Laurent series for a function $a(\la)$ at a point $\la = \la_0$:
$$
a(\la) = \sum_{k = -q}^{\infty} a_{\langle k \rangle}(\la_0) (\la - \la_0)^k.
$$
\end{enumerate}

\section{Preliminaries} \label{sec:prelim}

Denote by $\mathfrak F_n$ the class of $(n \times n)$ matrix functions $F(x) = [f_{k,j}(x)]_{k,j = 1}^n$ satisfying the conditions
$$
f_{k,j} = \de_{k+1,j}, \, k < j, \quad f_{k,k} \in L_2[0,1], \quad 
f_{k,j} \in L_1[0,1], \, k \ge j, \quad \mbox{trace}(F(x)) = 0.
$$
Thus, the structure of $F \in \mathfrak F_n$ can be symbolically presented as follows:
$$
\begin{bmatrix}
L_2 & 1 & 0 & \dots & 0 & 0 \\
L_1 & L_2 & 1 & \dots & 0 & 0 \\
L_1 & L_1 & L_2 & \dots & 0 & 0 \\
\hdotsfor{6} \\
L_1 & L_1 & L_1 & \dots & L_2 & 1 \\
L_1 & L_1 & L_1 & \dots & L_1 & L_2
\end{bmatrix}.
$$

In this section, we consider the system \eqref{sys} with an arbitrary matrix $F \in \mathfrak F_n$ and define the corresponding spectral data. This section is mainly based on the results of \cite{Bond21, Bond22-alg, Bond22-asympt, Bond23-loc}.

\subsection{Eigenvalues}
Let $F \in \mathfrak F_n$.
Using the entries of $F(x)$, define the quasi-derivatives
\begin{equation} \label{quasi}
y^{[0]} := y, \quad y^{[k]} := (y^{[k-1]})' - \sum_{j = 1}^k f_{k,j} y^{[j-1]}, \quad k = \overline{1,n},
\end{equation}
and the domain
\begin{equation} \label{defDF}
\mathcal D_F := \{ y \colon y^{[k]} \in AC[0,1], \, k = \overline{0,n-1} \}.
\end{equation}

For $y \in \mathcal D_F$, define the column vector function $\vec y(x) = [y^{[j-1]}(x)]_{j = 1}^n$. Obviously, if $y \in \mathcal D_F$, then 
$y^{[n]} \in L_1[0,1]$ and the equation 
\begin{equation} \label{eqvn}
y^{[n]} = \la y, \quad x \in (0,1), 
\end{equation}
is equivalent to the system \eqref{sys} with $Y(x) = \vec y(x)$. Indeed, the first $(n-1)$ rows of \eqref{sys} correspond to the definitions \eqref{quasi}  of the quasi-derivatives and the $n$-th row, to equation \eqref{eqvn}. Below, we consider solutions of \eqref{eqvn} belonging to the domain $\mathcal D_F$.

For $k = \overline{1,n}$, denote by $\mathcal C_k(x, \la)$ the solution of equation \eqref{eqvn} satisfying the initial conditions
\begin{equation} \label{initC}
\mathcal C_k^{[j-1]}(0, \la) = \de_{k,j}, \quad j = \overline{1,n}.
\end{equation}
Clearly, the matrix function $\mathcal C(x, \la) := [\mathcal C_k^{[j-1]}(x, \la)]_{j,k = 1}^n$ is a fundamental solution of the first-order system \eqref{sys}. Therefore, the solutions $\{ \mathcal C_k(x, \la) \}_{k = 1}^n$ exist and are unique. Moreover, their quasi-derivatives $\mathcal C_k^{[j-1]}(x, \la)$ are entire functions of $\la$ for each fixed $x \in [0,1]$ and $k,j = \overline{1,n}$.

For $k = \overline{1,n-1}$, denote by $\mathcal L_k$ the boundary value problem for equation \eqref{eqvn} with the boundary conditions \eqref{bc}.
It can be shown in the standard way that, for each $k = \overline{1,n-1}$, the problem $\mathcal L_k$ has a countable set of eigenvalues $\{ \la_{l,k} \}_{l \ge 1}$, which coincide with the zeros of the entire characteristic function
$$
\Delta_{k,k}(\la) := \det\left( [\mathcal C_r^{[n - j]}(1,\la)]_{j,r = k+1}^n\right).
$$

Furthermore, the eigenvalues satisfy the following asymptotic relations (see \cite{Bond22-asympt}):
\begin{equation} \label{asymptla}
\la_{l,k} = (-1)^{n-k} \left( \frac{\pi}{\sin \frac{\pi k}{n}} (l + \chi_k + \varkappa_{l,k})\right)^n, \quad l \ge 1, \, k = \overline{1,n-1},
\end{equation}
where $\{ \varkappa_{l,k} \} \in l_2$ and $\{ \chi_k \}_{k = 1}^{n-1}$ are constants which do not depend on the matrix $F(x)$. Hence, the constants $\{ \chi_k \}_{k = 1}^{n-1}$ can be determined by the eigenvalues $\{ \la_{l,k}^0 \}_{l \ge 1}$ of the matrix function $F^0(x)$ with the zero entries $f_{s,j}^0(x) \equiv 0$ for $s \ge j$:
$$
\chi_k = \lim_{l \to \infty} \left(\tfrac{1}{\pi} \sin (\tfrac{\pi k}{n}) \sqrt[n]{(-1)^{n-k}\la_{l,k}^0} - l \right), \quad k = \overline{1,n-1}.
$$

\subsection{Weyl solutions and Weyl-Yurko matrix}

For $k = \overline{1,n}$, denote by $\Phi_k(x, \la)$ the solution of equation \eqref{eqvn} satisfying the boundary conditions
\begin{equation} \label{bcPhi}
\Phi_k^{[j-1]}(0,\la) = \de_{k,j}, \quad j = \overline{1,k}, \qquad \Phi_k^{[s-1]}(1,\la) = 0, \quad s = \overline{1, n-k}.
\end{equation}

The functions $\{ \Phi_k(x, \la) \}_{k = 1}^n$ are called \textit{the Weyl solutions} of equation \eqref{eqvn}. Let us summarize the properties of the Weyl solutions, which have been discussed in \cite{Yur02} for the case of higher-order differential operators with regular coefficients and in \cite{Bond21, Bond22-alg} for the system \eqref{sys} in more detail.
For each fixed $x \in [0,1]$ and $k = \overline{1,n}$, the quasi-derivatives $\Phi_k^{[j]}(x, \la)$, $j = \overline{0,n-1}$, are meromorphic in the $\la$-plane and have poles at the eigenvalues $\{ \la_{l,k} \}_{l \ge 1}$. Furthermore, the Weyl solutions are ranked by their growth as $|\la| \to \infty$. In order to estimate them, put $\la = \rho^n$ and divide the $\rho$-plane into the sectors
$$
\Gamma_s := \left\{ \rho \in \mathbb C \colon \frac{\pi(s-1)}{n} < \arg \rho < \frac{\pi s}{n}\right\}, \quad s = \overline{1,2n}.
$$

In each fixed sector $\Gamma_s$, denote by $\{ \om_k \}_{k = 1}^n$ the roots of the equation $\om^n = 1$ numbered so that
\begin{equation} \label{order}
\mbox{Re} \, (\rho \om_1) < \mbox{Re} \, (\rho \om_2) < \dots < \mbox{Re} \, (\rho \om_n), \quad \rho \in \Gamma_s.
\end{equation}

Consider the matrix $F^0(x) = [\de_{k + 1, j}]_{k,j = 1}^n$ and the corresponding eigenvalues $\{ \la_{l,k}^0 \}_{l \ge 1}$ of the boundary value problems $\mathcal L_k^0$, $k = \overline{1,n-1}$. For a fixed sector $\Gamma_s$, denote $\rho_{l,k}^0 = \sqrt[n]{\la_{l,k}^0} \in \overline{\Gamma_s}$.  It can be shown that, for sufficiently large $l$, the eigenvalues $\la_{l,k}^0$ are real (see Lemma~\ref{lem:0}), so $\rho_{l,k}^0$ lie on the boundary of the sector $\Gamma_s$.
Introduce the region
\begin{equation} \label{defGs}
\Gamma_{s,\rho^*,\de} := \{ \rho \in \Gamma_s \colon |\rho| > \rho^*, \, |\rho - \rho_{l,k}^0| > \de, \, l \ge 1, \, k = \overline{1,n-1} \},
\end{equation}
for some positive numbers $\rho^*$ and $\de$.

\begin{prop}[\cite{Bond21, Bond22-alg}] \label{prop:estPhi}
The following estimate holds:
$$
|\Phi_k^{[j-1]}(x, \rho^n)| \le C |\rho|^{j-k} |\exp(\rho \om_k x)|, \quad k,j = \overline{1,n}, \quad x \in [0,1], \quad \rho \in \overline{\Gamma}_{s,\rho^*,\de}
$$
for each fixed $s = \overline{1,m}$, a sufficiently small $\de > 0$, and some $\rho^* > 0$. The numbers $\{ \om_k \}_{k = 1}^m$ are supposed to be numbered in the order \eqref{order} associated with the sector $\Gamma_s$.
\end{prop}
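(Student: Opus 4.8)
The plan is to realize $\Phi_k$ explicitly through a fundamental system of solutions of \eqref{eqvn} adapted to the sector $\Gamma_s$, and then to estimate the coefficients of this realization by Cramer's rule. First I would invoke the classical Birkhoff-type asymptotic theory for the first-order system \eqref{sys} (in the form valid for $F \in \mathfrak F_n$ with merely integrable entries): in the fixed sector there is a fundamental system $\{ y_r(x, \la) \}_{r=1}^n$ of solutions of \eqref{eqvn} whose quasi-derivatives obey
$$ y_r^{[j-1]}(x, \rho^n) = (\rho \om_r)^{j-1} \exp(\rho \om_r x)(1 + O(\rho^{-1})), \quad j = \overline{1,n}, $$
uniformly for $x \in [0,1]$ and $\rho \in \overline{\Gamma}_s$ with $|\rho| \ge \rho^*$, where $\{\om_r\}$ is ordered by \eqref{order}.

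Next I would write $\Phi_k(x,\la) = \sum_{r=1}^n a_{k,r}(\la) y_r(x,\la)$ and determine the coefficients from the $n$ boundary conditions \eqref{bcPhi}. The $k$ conditions at $x = 0$ read $\sum_r a_{k,r} y_r^{[j-1]}(0,\la) = \de_{k,j}$, $j = \overline{1,k}$, and the $n-k$ conditions at $x = 1$ read $\sum_r a_{k,r} y_r^{[s-1]}(1,\la) = 0$, $s = \overline{1,n-k}$. Substituting the asymptotics, the block at $x=1$ involves the factors $\exp(\rho\om_r)$ and is dominated by the fastest-growing solutions $y_{k+1}, \dots, y_n$; these $n-k$ conditions force the coefficients $a_{k,k+1}, \dots, a_{k,n}$ to be exponentially small relative to $a_{k,k}$, while the block at $x=0$ together with the normalization $\Phi_k^{[k-1]}(0) = 1$ yields $a_{k,k}(\la) = (\rho \om_k)^{1-k}(1 + O(\rho^{-1}))$ and keeps $a_{k,1}, \dots, a_{k,k-1}$ under control.

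To make this rigorous, the main step is a determinant estimate: after inserting the asymptotics, the determinant of the coefficient matrix factors, up to $(1 + O(\rho^{-1}))$, into a power of $\rho$, the product $\prod_{r > k}\exp(\rho \om_r)$ of the dominant exponentials coming from the $x=1$ block, and a nonvanishing Vandermonde-type determinant in the $\om_r$. The region $\Gamma_{s,\rho^*,\de}$ is precisely designed so that, away from the $\de$-neighborhoods of the model zeros $\rho_{l,k}^0$, this leading factor is bounded away from zero and hence cannot be annihilated by the $O(\rho^{-1})$ corrections; this guarantees invertibility of the coefficient matrix and controls its inverse. Evaluating the numerator determinants in the same fashion and cancelling yields the stated sizes of $a_{k,r}(\la)$. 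Finally, substituting back and using the asymptotics once more, the $r = k$ term dominates: for $r < k$ one has $|\exp(\rho\om_r x)| \le |\exp(\rho\om_k x)|$ on $[0,1]$ by \eqref{order}, and for $r > k$ the exponential smallness of $a_{k,r}$ beats the growth of $y_r$, so that in all cases $|a_{k,r} y_r^{[j-1]}(x,\rho^n)| \le C|\rho|^{j-k}|\exp(\rho\om_k x)|$, which is the claim.

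The hard part is the determinant analysis of the previous paragraph: one must track exactly which exponential $\exp(\rho\om_r)$ dominates in each row of the two blocks so as to extract the correct power of $\rho$ and the correct product of exponentials, and must use the separation from $\{\rho_{l,k}^0\}$ to keep the leading term of the characteristic determinant bounded below uniformly on $\overline{\Gamma}_{s,\rho^*,\de}$. The uniformity in $x \in [0,1]$, combined with absorbing every growing component except the one carrying $\om_k$ into the single bound $|\exp(\rho\om_k x)|$, is what makes the estimate delicate rather than routine.
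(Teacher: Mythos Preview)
The paper does not prove this proposition; it is quoted from \cite{Bond21, Bond22-alg} and stated without proof. Your sketch is the standard Birkhoff-type argument that underlies those references (and Yurko \cite{Yur02} in the regular-coefficient case): build an asymptotic fundamental system in the sector, expand $\Phi_k$ in it, solve the resulting $n\times n$ linear system from the boundary conditions \eqref{bcPhi} by Cramer's rule, and use that the determinant is a nonvanishing Vandermonde-type factor times $\prod_{r>k}\exp(\rho\om_r)$ up to $O(\rho^{-1})$, bounded below on $\overline{\Gamma}_{s,\rho^*,\de}$ precisely because the $\de$-discs around the model zeros are excised. There is nothing to compare against in the present paper, and your outline is both correct and in line with the cited sources.
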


Clearly, the matrix function $\Phi(x, \la) := [\Phi_k^{[j-1]}(x, \la)]_{j,k = 1}^n$ is a solution of the system \eqref{sys}. Therefore, $\Phi(x, \la)$ is related to the fundamental solution $\mathcal C(x, \la)$ as follows:
\begin{equation} \label{defM}
\Phi(x, \la) = \mathcal C(x, \la) M(\la),
\end{equation}
where $M(\la) = [M_{j,k}(\la)]_{j,k = 1}^n$ is some matrix function called \textit{the Weyl-Yurko matrix}. The Weyl-Yurko matrix for the first time was introduced by Yurko \cite{Yur92, Yur00, Yur02} for investigation of inverse spectral problems for higher-order differential operators with regular coefficients. 

It follows from \eqref{initC}, \eqref{bcPhi}, and \eqref{defM} that $M(\la)$ is a unit lower-triangular matrix:
$$
M(\la) = \begin{bmatrix}
            1 & 0 & \dots & 0 \\
            M_{2,1}(\la) & 1 & \dots & 0 \\
            \hdotsfor{4} \\
            M_{n,1}(\la) & M_{n,2}(\la) & \dots & 1
\end{bmatrix}.   
$$

Furthermore, the entries $M_{j,k}(\la)$ for $j > k$ are meromorphic in $\la$ with poles at the eigenvalues $\{ \la_{l,k} \}_{l \ge 1}$. 
In other words, the poles of the $k$-th column coincide with the zeros of the corresponding characteristic function $\Delta_{k,k}(\la)$.

\subsection{Weight matrices and weight numbers}

Denote by $\mathfrak F_{n,simp}$ a class of matrix functions $F(x)$ of $\mathfrak F_n$ such that the corresponding eigenvalues $\{ \la_{l,k} \}_{l \ge 1, \, k = \overline{1,n-1}}$ fulfill the following simplifying assumptions:

\medskip

\textbf{(A-1)} For each fixed $k = \overline{1,n-1}$, the eigenvalues $\{ \la_{l,k} \}_{l \ge 1}$ are simple.

\smallskip

\textbf{(A-2)} $\{ \la_{l,k} \}_{l \ge 1} \cap \{ \la_{l,k+1} \}_{l \ge 1} = \varnothing$ for $k = \overline{1,n-1}$.

\medskip

Note that, in view of the asymptotics \eqref{asymptla}, the conditions (A-1) and (A-2) hold for all sufficiently large indices $l$.
In terms of the Weyl-Yurko matrix, the assumptions (A-1) and (A-2) mean that all the poles of $M(\la)$ are simple and neighbouring columns do not have common poles, respectively. Hence, under the assumption (A-1), the Laurent series of $M(\la)$ at each pole $\la  = \la_{l,k}$ has the form
$$
M(\la) = \frac{M_{\langle -1 \rangle}(\la_{l,k})}{\la - \la_{l,k}} + M_{\langle 0 \rangle}(\la_{l,k}) + M_{\langle 1 \rangle}(\la_{l,k})(\la - \la_{l,k}) + \dots,
$$
where $M_{\langle j \rangle}(\la_{l,k})$ are $(n \times n)$ matrix coefficients. Define the weight matrices
\begin{equation} \label{defN}
\mathcal N(\la_{l,k}) := (M_{\langle 0 \rangle}(\la_{l,k}))^{-1} M_{\langle -1 \rangle}(\la_{l,k}), \quad
\mathcal N(\la_{l,k}) = [\mathcal N_{j,r}(\la_{l,k})]_{j,r = 1}^n.
\end{equation}

Due to \cite[Lemma 4]{Bond22-alg}, under the assumption (A-2), the weight matrices have the following structure:
\begin{equation} \label{structN}
\mathcal N_{j,r}(\la_{l,k}) \ne 0 \quad \Leftrightarrow \quad j = r+1, \: \Delta_{r,r}(\la_{l,k}) = 0.
\end{equation}
Thus, in this case, the weight matrices $\{ \mathcal N(\la_{l,k}) \}_{l \ge 1, \, k = \overline{1,n-1}}$ are uniquely specified by the \textit{weight numbers}
\begin{equation} \label{betaM}
\be_{l,k} := \mathcal N_{k+1,k}(\la_{l,k}) =  \Res_{\la = \la_{l,k}} M_{k+1,k}(\la), \quad l \ge 1, \, k = \overline{1,n-1}.
\end{equation}
In particular, \eqref{structN} implies that $\be_{l,k} \ne 0$. Using the method of \cite{Bond22-asympt, Bond23-asympt4}, the following asymptotics of the weight numbers can be obtained:
\begin{equation} \label{asymptbe}
\be_{l,k} = -n \la_{l,k} (1 + \varkappa_{l,k}^0), \quad \{ \varkappa_{l,k}^0 \} \in l_2, \quad l \ge 1, \, k = \overline{1,n-1}.
\end{equation}

The assumptions (A-1) and (A-2) were imposed in a number of previous studies \cite{Leib66, Leib71, Leib72, Yur00, Yur02, Bond23-loc}. Under these assumptions, coefficients of the differential expression \eqref{eqv} are typically uniquely specified by the spectral data $\{ \la_{l,k}, \be_{l,k} \}_{l \ge 1, \, k = \overline{1,n-1}}$. Uniqueness theorems of this kind have been proved in \cite{Leib66, Yur00, Yur02, Bond23-loc} for various classes of regular and distributional coefficients. However, the matrix function $F(x)$ in general is not uniquely determined by the spectral data (see Example 1 in \cite{Bond23-mmas}).

\subsection{Matrix $F^{\star}(x)$}

Along with $F(x) \in \mathfrak F_n$, consider the matrix function
\begin{equation} \label{defFs}
F^{\star}(x) = [f_{k,j}^{\star}(x)]_{k,j = 1}^n, \quad f_{k,j}(x) = (-1)^{k+j+1} f_{n-j+1,n-k+1}(x).
\end{equation}

Obviously, $F^{\star}(x)$ also belongs to $\mathfrak F_n$. Using the entries of $F^{\star}(x)$, define the quasi-derivatives
\begin{equation} \label{quasiz}
z^{[0]} := z, \quad z^{[k]} := (z^{[k-1]})' - \sum_{j = 1}^k f^{\star}_{k,j} z^{[j-1]}, \quad k = \overline{1,n},
\end{equation}
the domain
$$
\mathcal D_{F^{\star}} := \{ z \colon z^{[k]} \in AC[0,1], \, k = \overline{0,n-1} \},
$$
and consider the equation
\begin{equation} \label{eqvz}
(-1)^n z^{[n]} = \mu z, \quad x \in (0, 1),
\end{equation}
analogous to \eqref{eqvn}. Equation \eqref{eqvz} is equivalent to the first-order system
$$
\frac{d}{dx}\vec z(x) = (F^{\star}(x) + (-1)^n\Lambda) \vec z(x), \quad x \in (0,1),
$$
where $\vec z = [z^{[j-1]}]_{j = 1}^n$ is a column vector and the quasi-derivatives are understood in the sense \eqref{quasiz}.
We agree that, for $y \in \mathcal D_F$, the quasi-derivatives are defined by \eqref{quasi} and, for $z \in \mathcal D_{F^{\star}}$, by \eqref{quasiz}. The solutions of \eqref{eqvn} and \eqref{eqvz} are considered in the domains $\mathcal D_F$ and $\mathcal D_{F^{\star}}$, respectively. 

For $y \in \mathcal D_F$ and $z \in \mathcal D_{F^{\star}}$, define the Lagrange bracket:
\begin{equation} \label{defLagr}
\langle z, y \rangle = \sum_{k = 0}^{n-1} (-1)^k z^{[k]} y^{[n-k-1]}.
\end{equation}

If $y$ and $z$ satisfy equations \eqref{eqvn} and \eqref{eqvz}, respectively, then
\begin{equation} \label{wron}
\frac{d}{dx} \langle z, y \rangle = (\la - \mu) zy,
\end{equation}
see \cite[Section 2.1]{Bond22-alg}.

Analogously to $\{ \mathcal C_k(x, \la) \}_{k = 1}^n$ and $\{ \Phi_k(x, \la) \}_{k = 1}^n$, we define the solutions $\{ \mathcal C_k^{\star}(x, \la) \}_{k = 1}^n$ and $\{ \Phi_k^{\star}(x, \la) \}_{k = 1}^n$ of equation \eqref{eqvz} satisfying the initial conditions \eqref{initC} and the boundary conditions \eqref{bcPhi}, respectively. Furthermore, put $\mathcal C^{\star}(x, \la) := [\mathcal C^{\star[j-1]}_k(x, \la)]_{j,k = 1}^n$, $\Phi^{\star}(x, \la) := [\Phi^{\star[j-1]}_k(x, \la)]_{j,k = 1}^n$, and
define the Weyl-Yurko matrix $M^{\star}(\la) = [M^{\star}_{j,k}(\la)]_{j,k = 1}^n$ by the relation
\begin{equation} \label{defM*}
\Phi^{\star}(x, \la) = \mathcal C^{\star}(x, \la) M^{\star}(\la).
\end{equation}

\begin{prop}[\cite{Bond22-alg}] \label{prop:M}
The Weyl-Yurko matrices $M(\la)$ and $M^{\star}(\la)$ are related as follows:
\begin{equation} \label{relM}
(M^{\star}(\la))^T = J (M(\la))^{-1} J^{-1}, 
\end{equation}
where $J = [(-1)^j\de_{j,n-k+1}]_{j,k = 1}^n$ and $T$ denotes the matrix transpose.
\end{prop}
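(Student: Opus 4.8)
The plan is to derive \eqref{relM} from a matrix conservation law for the Lagrange bracket \eqref{defLagr}. First I would rewrite the bracket as a bilinear form: reading off \eqref{defLagr} one sees that $\langle z, y\rangle = \vec z^{\,T} B \vec y$, where $B = [(-1)^{a-1}\de_{b,n-a+1}]_{a,b=1}^n$ is the anti-diagonal sign matrix. A direct comparison with the matrix $J$ in the statement gives $B = -J$; in particular $B$ is invertible with $B^{-1} = -J^{-1}$. The point of this reformulation is that, by \eqref{wron}, for any solution $y$ of \eqref{eqvn} and any solution $z$ of \eqref{eqvz} taken at the \emph{same} spectral parameter $\la = \mu$, the scalar $\langle z, y\rangle$ is independent of $x$. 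Applying this entrywise to the columns of matrix solutions, I obtain that $Z(x)^T B\, Y(x)$ is constant in $x$ whenever $Y$ solves \eqref{sys} and $Z$ solves the first-order system equivalent to \eqref{eqvz}, both at parameter $\la$.

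Next I would specialize this to the two relevant pairs of solutions. For the fundamental solutions, $\mathcal C(x,\la)$ and $\mathcal C^{\star}(x,\la)$ both satisfy the initial conditions \eqref{initC}, so $\mathcal C(0,\la) = \mathcal C^{\star}(0,\la) = I$; hence $(\mathcal C^{\star}(x,\la))^T B\,\mathcal C(x,\la)$ is constant and equals its value $B$ at $x=0$. For the Weyl solutions the same conservation law shows that $Q := (\Phi^{\star}(x,\la))^T B\,\Phi(x,\la)$ is a constant matrix. Substituting the factorizations $\Phi = \mathcal C M$ and $\Phi^{\star} = \mathcal C^{\star} M^{\star}$ from \eqref{defM} and \eqref{defM*}, and using the identity just obtained for $\mathcal C, \mathcal C^{\star}$, this collapses to $Q = (M^{\star})^T B M$. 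Thus it remains only to compute the constant $Q$ from the boundary data and show $Q = B$: indeed $(M^{\star})^T B M = B$ gives $(M^{\star})^T = B M^{-1} B^{-1} = J M^{-1} J^{-1}$, the two minus signs from $B = -J$ cancelling.

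To evaluate $Q$ I would combine its two descriptions. Since $M$ and $M^{\star}$ are unit lower-triangular (the triangularity argument for $M$ recorded after \eqref{defM} applies verbatim to $M^{\star}$), the product $(M^{\star})^T B M$ automatically vanishes for all $(r,k)$ with $r+k > n+1$ and equals $(-1)^{r-1}$ on the anti-diagonal $r+k = n+1$, because the diagonal entries of $M$ and $M^{\star}$ are $1$. The only entries not yet pinned down are those with $r+k \le n$. For these I would use that $Q$ is constant and evaluate $\langle \Phi^{\star}_r, \Phi_k\rangle$ at $x = 1$: by the boundary conditions \eqref{bcPhi}, $\Phi_k^{[n-1-p]}(1,\la) = 0$ unless $p \le k-1$, while $\Phi^{\star[p]}_r(1,\la) = 0$ unless $p \ge n-r$, so every term in \eqref{defLagr} vanishes once $r+k \le n$. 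This yields $Q = B$ and finishes the argument.

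The main obstacle is precisely the determination of the constant $Q$ in the last step: a naive evaluation at a single endpoint leaves products of the high-order quasi-derivatives that the boundary conditions do not fix. The clean resolution is to let the triangular structure of $M$ and $M^{\star}$ dispose of the anti-diagonal and the region $r+k > n+1$, and to reserve the explicit endpoint computation for the region $r+k \le n$, where the conditions \eqref{bcPhi} force every surviving term to vanish; this split is what lets one read off the value of $Q$ without ever confronting an undetermined quasi-derivative.
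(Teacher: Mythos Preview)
Your argument is correct. The paper does not actually prove this proposition; it is quoted from \cite{Bond22-alg} without proof, so there is no in-paper argument to compare against. Your route via the matrix conservation law $(\Phi^{\star})^T B\,\Phi \equiv \text{const}$ is the natural one and is presumably what the cited reference does as well: the Lagrange identity \eqref{wron} at $\mu=\la$ gives the constancy, the initial conditions \eqref{initC} yield $(\mathcal C^{\star})^T B\,\mathcal C \equiv B$, and then the split between the triangular structure of $(M^{\star})^T B M$ (handling $r+k\ge n+1$) and the evaluation of $\langle\Phi_r^{\star},\Phi_k\rangle$ at $x=1$ via \eqref{bcPhi} (handling $r+k\le n$) cleanly identifies the constant as $B$. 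All steps check out, including the sign bookkeeping $B=-J$ and the index ranges in the endpoint computation.
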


Consider the boundary value problems $\mathcal L_k^{\star}$, $k = \overline{1,n-1}$, for equation \eqref{eqvz} with the boundary conditions \eqref{bc}, where the quasi-derivatives are defined by \eqref{quasiz}. Define the spectral data $\{ \la_{l,k}^{\star}, \be_{l,k}^{\star} \}_{l \ge 1, \, k = \overline{1,n-1}}$ analogously to $\{ \la_{l,k}, \be_{l,k} \}_{l \ge 1, \, k = \overline{1,n-1}}$.

\begin{lem} \label{lem:simps}
Suppose that $F \in \mathfrak F_{n,simp}$. Then $F^{\star} \in \mathfrak F_{n,simp}$ and
$$
\la_{l,k}^{\star} = \la_{l,n-k}, \quad \be_{l,k}^{\star} = \be_{l,n-k}, \quad l \ge 1, \, k = \overline{1,n-1}.
$$
\end{lem}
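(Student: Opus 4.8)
The plan is to reduce everything to two ingredients that are already available: the duality between equations \eqref{eqvn} and \eqref{eqvz} expressed through the Lagrange bracket \eqref{defLagr}, and the relation \eqref{relM} between the Weyl--Yurko matrices. I would treat the eigenvalues and the weight numbers separately, and deduce $F^{\star} \in \mathfrak F_{n,simp}$ along the way.

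First, for the eigenvalues, I would derive the bilinear identity $(\mathcal C^{\star}(x,\la))^T J \, \mathcal C(x,\la) = J$, valid for all $x$ and $\la$. It follows from \eqref{wron}: taking $\mu = \la$ makes $\langle z, y \rangle$ constant in $x$; applying this with $z = \mathcal C^{\star}_a(\cdot,\la)$ and $y = \mathcal C_b(\cdot,\la)$, and noting that \eqref{defLagr} together with the definition of $J$ gives $\langle z, y \rangle = -\vec z^T J \vec y$, the value of the constant matrix is read off at $x = 0$ from \eqref{initC}, where $\mathcal C(0,\la) = \mathcal C^{\star}(0,\la) = I$. Next I would observe that, up to fixed nonzero signs, $\Delta^{\star}_{k,k}(\la)$ is the minor of $\mathcal C^{\star}(1,\la)$ on rows $\{1,\dots,n-k\}$ and columns $\{k+1,\dots,n\}$, while $\Delta_{n-k,n-k}(\la)$ is the minor of $\mathcal C(1,\la)$ on rows $\{1,\dots,k\}$ and columns $\{n-k+1,\dots,n\}$. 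The identity rewrites as $(\mathcal C^{\star})^T = J \mathcal C^{-1} J^{-1}$; since conjugation by $J$ only flips indices via $p \mapsto n-p+1$ and inserts signs, the first minor equals, up to a nonzero constant, the minor of $\mathcal C^{-1}(1,\la)$ on rows $\{1,\dots,n-k\}$ and columns $\{k+1,\dots,n\}$. Applying Jacobi's identity for complementary minors of an inverse, together with $\det \mathcal C(1,\la) = 1$ (which holds by Liouville's formula, as $\mathrm{trace}(F + \Lambda) = 0$), this equals, up to a nonzero constant, the complementary minor of $\mathcal C(1,\la)$ on rows $\{1,\dots,k\}$ and columns $\{n-k+1,\dots,n\}$, namely $\Delta_{n-k,n-k}$. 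Hence $\Delta^{\star}_{k,k}(\la) = c_k \Delta_{n-k,n-k}(\la)$ with $c_k \ne 0$, so these entire functions share their zeros together with multiplicities. This yields $\la^{\star}_{l,k} = \la_{l,n-k}$; and since simplicity (A-1) and disjointness of neighbouring columns (A-2) are preserved under the reindexing $k \mapsto n-k$, it also gives $F^{\star} \in \mathfrak F_{n,simp}$.

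Second, for the weight numbers, I would turn \eqref{relM} into an entrywise statement. Writing out the conjugation by $J$ gives $(J A J^{-1})_{p,q} = (-1)^{p-q} A_{n-p+1,\,n-q+1}$, whence $M^{\star}_{k+1,k}(\la) = -\big(M(\la)^{-1}\big)_{n-k+1,\,n-k}$. As $M(\la)$ is unit lower triangular, its subdiagonal inverse entry equals $-M_{n-k+1,\,n-k}(\la)$, so in fact $M^{\star}_{k+1,k}(\la) = M_{n-k+1,\,n-k}(\la)$. Taking residues at the common pole $\la^{\star}_{l,k} = \la_{l,n-k}$ and using \eqref{betaM} for both problems, I obtain $\be^{\star}_{l,k} = \Res_{\la = \la_{l,n-k}} M_{n-k+1,\,n-k}(\la) = \be_{l,n-k}$, as required.

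I expect the main obstacle to be the first step, specifically the careful bookkeeping of the row and column index sets and of the accompanying sign constants under the anti-diagonal flip $p \mapsto n-p+1$ when identifying $\Delta^{\star}_{k,k}$ and $\Delta_{n-k,n-k}$ as complementary minors and invoking Jacobi's identity; the weight-number computation is then a short residue calculation once the entry relation $M^{\star}_{k+1,k} = M_{n-k+1,\,n-k}$ is in hand. A useful consistency check is that the asymptotics \eqref{asymptla} are invariant under $k \mapsto n-k$, since $\sin \tfrac{\pi k}{n} = \sin \tfrac{\pi (n-k)}{n}$, which matches the claimed equality of the eigenvalue sequences.
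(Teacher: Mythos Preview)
Your argument is correct, and for the weight numbers it is essentially identical to the paper's: both derive the entrywise identity $M^{\star}_{k+1,k}(\la)=M_{n-k+1,n-k}(\la)$ from \eqref{relM} and then read off $\be^{\star}_{l,k}=\be_{l,n-k}$ via \eqref{betaM}.

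The eigenvalue step, however, follows a genuinely different route. The paper does not compare characteristic determinants at all; instead it argues through the Weyl--Yurko matrix: from $M^{\star}_{k+1,k}=M_{n-k+1,n-k}$ and the fact that $\be_{l,n-k}\ne 0$ (a consequence of \eqref{structN} for $F\in\mathfrak F_{n,simp}$), the poles of $M^{\star}_{k+1,k}$ are precisely $\{\la_{l,n-k}\}$; since the poles of the $k$-th column of $M^{\star}$ lie in $\{\la^{\star}_{l,k}\}$, one has $\{\la_{l,n-k}\}\subseteq\{\la^{\star}_{l,k}\}$, and the reverse inclusion is forced by the asymptotics \eqref{asymptla}. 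Your approach, by contrast, establishes the bilinear identity $(\mathcal C^{\star})^TJ\,\mathcal C=J$ and then uses Jacobi's complementary-minor identity together with $\det\mathcal C\equiv1$ to prove directly that $\Delta^{\star}_{k,k}=c_k\,\Delta_{n-k,n-k}$ as entire functions. This is more self-contained: it avoids the detour through $\be_{l,k}\ne 0$ and the asymptotic argument needed to rule out extra zeros, and it immediately gives equality of zero sets with multiplicities. The paper's argument has the advantage of being shorter once \eqref{relM} is available; your argument has the advantage of being purely algebraic and of working even without the assumption $F\in\mathfrak F_{n,simp}$ (it shows $\Delta^{\star}_{k,k}\equiv c_k\Delta_{n-k,n-k}$ for any $F\in\mathfrak F_n$). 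Note that the observation $\sin\tfrac{\pi k}{n}=\sin\tfrac{\pi(n-k)}{n}$, which you list as a consistency check, is in fact what pins down the index-by-index equality $\la^{\star}_{l,k}=\la_{l,n-k}$ once the zero sets coincide as multisets.
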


\begin{proof}
Relation \eqref{relM} implies 
\begin{equation} \label{relMk}
M^{\star}_{k+1,k}(\la) = M_{n-k+1,n-k}(\la), \quad k = \overline{1,n-1}.
\end{equation}

Let the assumptions (A-1) and (A-2) hold for $\{ \la_{l,k} \}_{l \ge 1, \, k = \overline{1,n-1}}$.
Taking \eqref{betaM} and $\be_{l,k} \ne 0$ into account, we conclude that all the poles of $M^{\star}_{k+1,k}(\la)$ are simple and coincide with $\{ \la_{l,n-k} \}_{l \ge 1}$. On the other hand, the poles of the $k$-th column of $M^{\star}(\la)$ belong to the set $\{ \la_{l,k}^{\star} \}_{l \ge 1}$. The set $\{ \la_{l,k}^{\star} \}_{l \ge 1} \setminus \{ \la_{l,n-k} \}_{l \ge 1}$ is empty because of the asymptotics \eqref{asymptla} for $\la_{l,k}^{\star}$ and $ \la_{l,n-k}$. Hence $\la_{l,k}^{\star} = \la_{l,n-k}$ for $l \ge 1$, $k = \overline{1,n-1}$. This implies (A-1) and (A-2) for $\{ \la_{l,k}^{\star} \}_{l \ge 1, \, k = \overline{1,n-1}}$. Thus $F^{\star} \in \mathfrak F_{n,simp}$. The relation $\be_{l,k}^{\star} = \be_{l,n-k}$ follows from \eqref{betaM} and \eqref{relMk}.
\end{proof}

\section{Self-adjoint case} \label{sec:sa}

Denote by $\mathfrak F_n^+$ and $\mathfrak F_{n,simp}^+$ the subclasses of matrix functions of $\mathfrak F_n$ and $\mathfrak F_{n,simp}$, respectively, satisfying the additional condition
\begin{equation} \label{condf}
f_{k,j}(x) = (-1)^{k+j+1} \overline{f_{n-j+1,n-k+1}(x)},
\end{equation}
where the bar denotes the complex conjugate.
The condition \eqref{condf} is a kind of self-adjointness. In particular, the matrix \eqref{F2} belongs to $\mathfrak F_2^+$ if $\sigma(x)$ is real-valued. In this section, we study the properties of the spectral data for $F \in \mathfrak F_{n,simp}^+$.

If $F \in \mathfrak F_n^+$, then $F^{\star}(x) = [\overline{f_{k,j}(x)}]_{k,j = 1}^n$. Consequently, 
$$
\mathcal C_k^{\star}(x, \la) = \overline{\mathcal C_k(x, (-1)^n\overline{\la})}, \quad \Phi_k^{\star}(x, \la) = \overline{\Phi_k(x, (-1)^n\overline{\la})}, \quad M_{j,k}^{\star}(\la) = \overline{M_{j,k}((-1)^n\overline{\la})}, \quad j,k = \overline{1,n}.
$$

In view of Lemma~\ref{lem:simps}, for $F \in \mathfrak F_{n,simp}^+$, we have
\begin{equation} \label{sasd}
\la_{l,k} = (-1)^n \overline{\la_{l,n-k}}, \quad
\be_{l,k} = (-1)^n \overline{\be_{l,n-k}}, \quad l \ge 1, \, k = \overline{1,n-1}.
\end{equation}

In particular, if $n = 2p$, $p \in \mathbb N$, then the boundary value problem $\mathcal L_p$ is self-adjoint and $\la_{l,p}$, $\be_{l,p}$ are real for all $l \ge 1$. Moreover, the following lemma holds.

\begin{lem} \label{lem:positive}
Suppose that $n = 2p$, $p \in \mathbb N$, and $F \in \mathfrak F_{n,simp}^+$. Then $(-1)^{p+1} \be_{l,p} > 0$ for all $l \ge 1$.
\end{lem}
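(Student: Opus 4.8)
The plan is to express $\be_{l,p}$ as the reciprocal of a sign times the $L_2$-norm squared of a suitable eigenfunction, and then simply read off the sign. First I would identify the eigenfunction of $\mathcal L_p$ attached to $\la_{l,p}$ with the next Weyl solution. Writing $\Phi(x,\la) = \mathcal C(x,\la) M(\la)$ column-wise and using that $M$ is unit lower-triangular gives
$$
\Phi_p(x,\la) = \mathcal C_p(x,\la) + \sum_{r > p} \mathcal C_r(x,\la) M_{r,p}(\la).
$$
Under (A-1) and (A-2), the structure \eqref{structN} forces $\mathcal N(\la_{l,p})$ to have the single nonzero entry $\mathcal N_{p+1,p} = \be_{l,p}$, whence $\Res_{\la = \la_{l,p}} M_{r,p}(\la) = \be_{l,p}\, M_{r,p+1}(\la_{l,p})$ for every $r$. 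Summing against the entire functions $\mathcal C_r$ and recognizing the resulting series as the $(p+1)$-st column of $\mathcal C M$, I would obtain
$$
\Res_{\la = \la_{l,p}} \Phi_p(x,\la) = \be_{l,p}\, \Phi_{p+1}(x, \la_{l,p}) =: \be_{l,p}\, y_{l,p}(x).
$$
Inspecting the boundary conditions \eqref{bcPhi} shows that $y_{l,p}$ satisfies all conditions of $\mathcal L_p$ and is nontrivial (since $y_{l,p}^{[p]}(0) = 1$), so it is an eigenfunction.

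Second, I would exploit self-adjointness to pass to $\mathcal L_p^\star$. For $n = 2p$ one has $(-1)^n = 1$, so $\Phi_{p+1}^\star(x,\la) = \overline{\Phi_{p+1}(x,\overline\la)}$; since $\la_{l,p}$ is real ($\mathcal L_p$ being self-adjoint) and $\la_{l,p}^\star = \la_{l,n-p} = \la_{l,p}$ by Lemma~\ref{lem:simps}, the function $z(x) := \Phi_{p+1}^\star(x,\la_{l,p}) = \overline{y_{l,p}(x)}$ is an eigenfunction of $\mathcal L_p^\star$, i.e.\ a solution of \eqref{eqvz} with $\mu = \la_{l,p}$. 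I would then apply the Lagrange identity \eqref{wron} to this $z$ and to $y = \Phi_p(\cdot,\la)$ (a solution of \eqref{eqvn}), integrating over $(0,1)$:
$$
\langle z, \Phi_p(\cdot,\la)\rangle\Big|_0^1 = (\la - \la_{l,p}) \int_0^1 z(x)\,\Phi_p(x,\la)\,dx.
$$

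Third, I would evaluate the Lagrange bracket at the endpoints from the boundary data. At $x = 1$ every summand vanishes: if $k \le p-1$ then $z^{[k]}(1) = 0$ because $z$ is an eigenfunction of $\mathcal L_p^\star$, while if $k \ge p$ then $\Phi_p^{[n-k-1]}(1,\la) = 0$ by \eqref{bcPhi}. At $x = 0$, the initial conditions \eqref{bcPhi} for $z = \Phi_{p+1}^\star$ and $y = \Phi_p$ leave exactly the single term $k = p$, yielding $\langle z,\Phi_p(\cdot,\la)\rangle(0) = (-1)^p$; hence the left-hand side equals the constant $(-1)^{p+1}$. Letting $\la \to \la_{l,p}$ and inserting the residue computed above gives
$$
(-1)^{p+1} = \be_{l,p} \int_0^1 |y_{l,p}(x)|^2\,dx,
$$
so $(-1)^{p+1}\be_{l,p} = \left( \int_0^1 |y_{l,p}|^2\,dx\right)^{-1} > 0$, as claimed.

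The main obstacle is the first step: the residue computation for $\Phi_p$, which hinges on the rank-one structure \eqref{structN} of the weight matrix and on identifying the residual series with $\Phi_{p+1}$. The endpoint evaluation of the Lagrange bracket is the other delicate point, as it requires combining the $\la$-independent boundary conditions of $\Phi_p$ with the fact that, precisely at $\la = \la_{l,p}$, the function $z$ has become a genuine eigenfunction — which is what annihilates the boundary term at $x = 1$ that would otherwise survive for a generic parameter $\mu$.
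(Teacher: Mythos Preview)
Your proof is correct and follows essentially the same route as the paper: derive $\Res_{\la=\la_{l,p}}\Phi_p = \be_{l,p}\Phi_{p+1}(\cdot,\la_{l,p})$ from the rank-one structure of $\mathcal N(\la_{l,p})$, then pair $\Phi_{p+1}^\star(\cdot,\la_{l,p})$ with $\Phi_p(\cdot,\la)$ via the Lagrange identity and evaluate the boundary terms. The only differences are cosmetic --- you obtain the residue relation directly from $M_{\langle -1\rangle}=M_{\langle 0\rangle}\mathcal N$ instead of quoting the corresponding lemma from \cite{Bond22-alg}, and you are in fact more explicit than the paper about why the $k=p-1$ term of the Lagrange bracket vanishes at $x=1$ (it requires the extra boundary condition that $z=\Phi_{p+1}^\star(\cdot,\la_{l,p})$ inherits as an eigenfunction of $\mathcal L_p^\star$, not merely \eqref{bcPhi}).
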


\begin{proof} 
Fix $l \in \mathbb N$. By Lemma 3 in \cite{Bond22-alg}, we have the relation
$$
\Phi_{\langle -1 \rangle}(x, \la_{l,p}) = \Phi_{\langle 0 \rangle}(x, \la_{l,p}) \mathcal N(\la_{l,p}).
$$

In view of \eqref{structN} and \eqref{betaM}, this implies
\begin{equation} \label{smPhi}
\Phi_{p,\langle -1 \rangle}(x, \la_{l,p}) = \Phi_{p+1}(x, \la_{l,p}) \be_{l,p}.
\end{equation}

Note that $\Phi_{p+1}(x, \la_{l,p})$ is the eigenfunction of the problem $\mathcal L_p$ corresponding to the real eigenvalue $\la_{l,p}$. The identity \eqref{wron} implies
\begin{equation} \label{wronPhi}
\langle \Phi^{\star}_{p+1}(x, \la_{l,p}), \Phi_p(x, \la) \rangle \Big|_0^1 = (\la - \la_{l,p}) \int_0^1 \Phi_{p+1}^{\star}(x, \la_{l,p}) \Phi_p(x, \la) \, dx.
\end{equation}

Using \eqref{defLagr} and \eqref{bcPhi}, we calculate
$$
\langle \Phi^{\star}_{p+1}(x, \la_{l,p}), \Phi_p(x, \la) \rangle = \begin{cases}
    (-1)^p, & x = 0, \\
    0, & x = 1.
\end{cases}
$$

Consequently, it follows from \eqref{wronPhi} that
$$
(-1)^{p + 1} = \lim_{\la \to \la_{l,p}} (\la - \la_{l,p}) \int_0^1 \Phi_{p+1}^{\star}(x, \la_{l,p}) \Phi_p(x, \la) \, dx = 
\int_0^1 \Phi_{p+1}^{\star}(x, \la_{l,p}) \Phi_{p, \langle -1 \rangle}(x, \la_{l,p}) \, dx.
$$

Taking the relations \eqref{smPhi}, $\Phi^{\star}_{p+1}(x, \la_{l,p}) = \overline{\Phi_{p+1}(x, \overline{\la_{l,p}})}$, and $\la_{l,p} \in \mathbb R$ into account, we conclude that
$$
(-1)^{p + 1} = \be_{l,p} \int_0^1 |\Phi_{p+1}(x, \la_{l,p})|^2 \, dx.
$$

Since the integral is positive, this yields the claim.
\end{proof}

In addition, we obtain the following lemma for the eigenvalues corresponding to the matrix $F^0(x) = [\de_{k+1,j}]_{k,j = 1}^n$.

\begin{lem} \label{lem:0}
The eigenvalues $\{ \la_{l,k}^0 \}_{l \ge 1, \, k = \overline{1,n-1}}$ are real for all sufficiently large indices $l$.
\end{lem}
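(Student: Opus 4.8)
The plan is to exploit that the matrix $F^0$ is simultaneously real and self-adjoint, so that its starred spectral data admits two different descriptions whose comparison forces reality. First I would record the two structural facts about $F^0 = [\de_{k+1,j}]_{k,j=1}^n$. The self-adjointness condition \eqref{condf} is immediate: off the superdiagonal both sides vanish, while on the superdiagonal $j = k+1$ the sign $(-1)^{k+j+1} = (-1)^{2k+2}$ equals $+1$, so $F^0 \in \mathfrak F_n^+$; being real, it moreover satisfies $(F^0)^{\star} = \overline{F^0} = F^0$. Second, by the asymptotics \eqref{asymptla} the eigenvalues $\{\la_{l,k}^0\}_{l \ge 1}$ are simple and fulfill (A-1), (A-2) for all large $l$, so that the self-adjoint machinery of Section~\ref{sec:sa} and Lemma~\ref{lem:simps} apply to $F^0$ for such indices.

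Next I would compute the starred eigenvalues $\la_{l,k}^{0\star}$ of $\mathcal L_k^{\star}$ for $F^0$ in two ways. On one hand, since $(F^0)^{\star} = F^0$, the quasi-derivatives \eqref{quasiz} reduce to ordinary ones and the starred problem becomes the boundary value problem for $(-1)^n z^{[n]} = \mu z$ with the same conditions \eqref{bc}; multiplying the equation by $(-1)^n$ shows that $\mu$ is a starred eigenvalue if and only if $(-1)^n \mu$ is an eigenvalue of $\mathcal L_k^0$, whence $\la_{l,k}^{0\star} = (-1)^n \la_{l,k}^0$. On the other hand, the relation $M_{j,k}^{\star}(\la) = \overline{M_{j,k}((-1)^n\overline{\la})}$ established in Section~\ref{sec:sa} for $F \in \mathfrak F_n^+$, upon comparing the poles of the $k$-th column (which are $\{\la_{l,k}^{0\star}\}$ on the left and $\{(-1)^n\overline{\la_{l,k}^0}\}$ on the right), yields $\la_{l,k}^{0\star} = (-1)^n\overline{\la_{l,k}^0}$. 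Equating the two expressions gives $(-1)^n\la_{l,k}^0 = (-1)^n\overline{\la_{l,k}^0}$, i.e. $\la_{l,k}^0 = \overline{\la_{l,k}^0}$, which is the desired reality.

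The main obstacle is that the two computations are, strictly speaking, identities between the pole sets of meromorphic functions, so what they deliver directly is that the spectrum $\{\la_{l,k}^0\}_{l \ge 1}$ is invariant under complex conjugation, rather than the reality of each individual eigenvalue. Upgrading this set-symmetry to eventual pointwise reality is the delicate step: I would use \eqref{asymptla} to show that, for large $l$, the eigenvalue $\la_{l,k}^0$ lies within less than half of the spectral gap from the real asymptotic point $(-1)^{n-k}\bigl(\tfrac{\pi}{\sin(\pi k/n)}(l + \chi_k)\bigr)^n$, so that its complex conjugate --- which is again an eigenvalue by the symmetry just proved --- falls in the same asymptotic slot and, by simplicity (A-1) together with the bijectivity of the enumeration by $l$, must coincide with $\la_{l,k}^0$ itself. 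The technical heart is thus controlling the imaginary part of $\la_{l,k}^0$ relative to the gap, equivalently verifying that the constant $\chi_k$ is real; this can be read off from the explicit Birkhoff-regular characteristic determinant of $F^0$, which is a finite exponential sum in $\rho = \sqrt[n]{\la}$ invariant, up to a column permutation and a sign, under $\rho \mapsto \overline{\rho}$.
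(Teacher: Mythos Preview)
Your approach is essentially the same as the paper's: exploit that $F^0$ is both real (so $(F^0)^{\star}=F^0$, giving $\la_{l,k}^{0\star}=(-1)^n\la_{l,k}^0$) and in $\mathfrak F_n^+$ (giving $\la_{l,k}^{0\star}=(-1)^n\overline{\la_{l,k}^0}$ via the relation for $M^{\star}$), deduce that each spectrum $\{\la_{l,k}^0\}_{l\ge 1}$ is conjugation-symmetric as a set, and then use simplicity for large $l$ from the asymptotics to force individual reality. The only cosmetic difference is that the paper routes the second identity through the index $n-k$ via Lemma~\ref{lem:simps}, and your final paragraph on $\chi_k$ makes explicit a detail the paper leaves terse; both are correct.
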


\begin{proof}
On the one hand, $(F^0)^{\star} = F^0$, so $\la_{l,k} = (-1)^n\la_{l,k}^{\star}$. Using Lemma~\ref{lem:simps}, we conclude that $\la_{l,k} = (-1)^n \la_{l,n-k}$. On the other hand, $F^0 \in \mathfrak F_n^+$, so $\la_{l,k} = (-1)^n \overline{\la_{l,n-k}}$. Consequently, the spectrum $\{ \la_{l,k} \}_{l \ge 1}$ of each problem $\mathcal L_k$ is symmetric with respect to the real axis. Furthermore, according to the asymptotics \eqref{asymptla}, the eigenvalues $\{ \la_{l,k} \}_{l \ge 1}$ are simple for all sufficiently large indices $l$, so they are real. 
\end{proof}

\section{Main equation} \label{sec:main}

In this section, we provide the main equation \eqref{main} of the method of spectral mappings for the system \eqref{sys} basing on the results of \cite{Bond22-alg}. First, we need some additional notations.

Consider two matrix functions $F(x)$ and $\tilde F(x)$ of the class $\mathfrak F_{n,simp}$. We agree that, if a symbol $\ga$ denotes an object related to $F(x)$, then the symbol $\tilde \ga$ with tilde will denote the analogous object related to $\tilde F(x)$. In addition, define the matrix $\tilde F^{\star}(x)$ similarly to \eqref{defFs}. Note that, for solutions related to the matrix functions $\tilde F(x)$ and $\tilde F^{\star}(x)$, the quasi-derivatives are defined similarly to \eqref{quasi} and \eqref{quasiz} with $\tilde f_{k,j}$ and $\tilde f_{k,j}^{\star}$ instead of $f_{k,j}$ and $f_{k,j}^{\star}$, respectively.
For technical simplicity, assume that 
\begin{equation} \label{difla}
\{ \la_{l,k} \}_{l \ge 1, \, k = \overline{1,n-1}} \cap 
\{ \tilde \la_{l,k} \}_{l \ge 1, \, k = \overline{1,n-1}} = \varnothing.
\end{equation}
The opposite case requires minor changes.

For convenience, introduce the notations
\begin{gather} \nonumber 
    V := \{ (l,k,\eps) \colon l \in \mathbb N, \, k = \overline{1,n-1}. \, \eps = 0, 1 \},  \\ \nonumber
    \la_{l,k,0} := \la_{l,k}, \quad \la_{l,k,1} := \tilde \la_{l,k}, \quad \be_{l,k,0} := \be_{l,k}, \quad \be_{l,k,1} := \tilde \be_{l,k}, \\ \label{defvv}
    \vv_{l,k,\eps}(x) := \Phi_{k+1}(x, \la_{l,k,\eps}), \quad \tilde \vv_{l,k,\eps}(x) := \tilde \Phi_{k+1}(x, \la_{l,k,\eps}), \quad (l,k,\eps) \in V.
\end{gather}

Thus, the indices $0$ and $1$ are used for the values related to $F(x)$ and $\tilde F(x)$, respectively. Note that the Weyl solution $\Phi_{k+1}(x, \la)$ and $\tilde \Phi_{k+1}(x, \la)$ have poles $\{ \la_{l,k+1,0} \}$ and $\{ \la_{l,k+1,1} \}$, respectively. Therefore, under the assumptions (A-2) and \eqref{difla}, the numbers $\{ \la_{l,k,\eps} \}$ are regular points of $\Phi_{k+1}(x, \la)$ and $\tilde \Phi_{k+1}(x, \la)$, so \eqref{defvv} correctly defines the functions $\vv_{l,k,\eps}(x)$ and $\tilde \vv_{l,k,\eps}(x)$.

Introduce the auxiliary functions
\begin{equation} \label{defD}
\tilde D_{k,k_0}(x, \mu, \la) := \frac{\langle\tilde \Phi_k^{\star}(x, \mu), \tilde \Phi_{k_0}(x, \la)\rangle}{\la - \mu}, \quad k, k_0 = \overline{1,n},
\end{equation}
where the Lagrange bracket is defined by \eqref{defLagr} and the quasi-derivatives for $\tilde \Phi_k^{\star}(x, \mu)$ and $\tilde \Phi_{k_0}(x, \la)$ are generated by the matrices $\tilde F^{\star}(x)$ and $\tilde F(x)$, respectively.

\begin{lem} \label{lem:singD}
The function $\tilde D_{k,k_0}(x, \mu, \la)$ has singularities at $\mu = \la_{l,n-k,1}$ if $k < n$, at $\la = \la_{l,k_0,1}$ if $k_0 < n$, and at $\la = \mu$ if $k + k_0 = n + 1$.
\end{lem}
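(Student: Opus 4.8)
The plan is to read off the singularity structure of $\tilde D_{k,k_0}(x,\mu,\la)$ directly from its defining formula \eqref{defD}, namely
$$
\tilde D_{k,k_0}(x, \mu, \la) = \frac{\langle \tilde \Phi_k^{\star}(x, \mu), \tilde \Phi_{k_0}(x, \la)\rangle}{\la - \mu}.
$$
The numerator is a Lagrange bracket of two Weyl solutions, and a product/sum of meromorphic functions is meromorphic with poles at the union of the poles of its factors; the denominator contributes a pole along $\la = \mu$. So I would first recall, from the properties of Weyl solutions summarized before Proposition~\ref{prop:estPhi} (applied to $\tilde F$ and $\tilde F^{\star}$), that $\tilde \Phi_{k_0}^{[j-1]}(x,\la)$ is meromorphic in $\la$ with poles exactly at the eigenvalues $\{\tilde\la_{l,k_0}\} = \{\la_{l,k_0,1}\}$, and that by Lemma~\ref{lem:simps} the starred solution $\tilde\Phi_k^{\star[j-1]}(x,\mu)$ is meromorphic in $\mu$ with poles at $\{\tilde\la_{l,k}^{\star}\} = \{\tilde\la_{l,n-k}\} = \{\la_{l,n-k,1}\}$.

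Next I would treat the three claimed singularity families separately. For the first, $\mu = \la_{l,n-k,1}$: since the Lagrange bracket \eqref{defLagr} is a finite sum of products $\pm\, \tilde\Phi_k^{\star[r]}(x,\mu)\,\tilde\Phi_{k_0}^{[s]}(x,\la)$, the $\mu$-dependence enters only through $\tilde\Phi_k^{\star}$, whose poles sit precisely at $\{\la_{l,n-k,1}\}$ when $k<n$ (the column $k=n$ gives the regular, non-decaying solution and contributes no poles, which is why the restriction $k<n$ appears). For the second family $\la = \la_{l,k_0,1}$ the argument is symmetric, using the $\la$-dependence through $\tilde\Phi_{k_0}$, valid for $k_0<n$. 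For the third, $\la = \mu$: this is the explicit zero of the denominator, so a pole can survive only if the numerator does not vanish identically on the diagonal; I would invoke the Lagrange-type identity \eqref{wron}, which gives $\frac{d}{dx}\langle z, y\rangle = (\la-\mu)\,zy$ for solutions $z$ of \eqref{eqvz} and $y$ of \eqref{eqvn}, to see that the bracket $\langle\tilde\Phi_k^{\star}(x,\mu),\tilde\Phi_{k_0}(x,\la)\rangle$ becomes $x$-independent (constant) when $\la=\mu$, and this constant is generically nonzero precisely when the two boundary-condition index sets are complementary, i.e. $k+k_0 = n+1$. In that complementary case the bracket evaluates at a boundary point to a nonzero constant (as in the computation in the proof of Lemma~\ref{lem:positive}, where $\langle\Phi_{p+1}^{\star},\Phi_p\rangle = (-1)^p$ at $x=0$), so the quotient has a genuine pole along $\la=\mu$; for $k+k_0\neq n+1$ the constant numerator vanishes and the apparent singularity is removable.

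I anticipate that the main obstacle is the diagonal case $\la=\mu$: one must argue carefully that the numerator does not already vanish to first order in $(\la-\mu)$ and cancel the denominator. The clean way is to show that at $\la=\mu$ the bracket is $x$-independent by \eqref{wron}, then evaluate it at a convenient endpoint using the boundary conditions \eqref{bcPhi} to determine exactly when it is nonzero; the condition $k+k_0=n+1$ is exactly the combinatorial matching that makes the relevant product of Kronecker deltas survive. A secondary subtlety is bookkeeping the exclusion of the index values $k=n$ and $k_0=n$, for which the corresponding Weyl solution is entire (pole-free) in its spectral argument, so the first two singularity families do not arise; this is why the statement qualifies them with $k<n$ and $k_0<n$. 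The remaining verifications — that the bracket is meromorphic and that no further singularities appear — are routine consequences of the meromorphy of the Weyl solutions already recorded in the preliminaries.
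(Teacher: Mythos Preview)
Your proposal is correct and follows essentially the same approach as the paper: both use the Lagrange identity \eqref{wron} together with the boundary values \eqref{bcPhi} to isolate the diagonal contribution, and both read off the $\mu$- and $\la$-poles from the meromorphy of the Weyl solutions combined with Lemma~\ref{lem:simps}. The only cosmetic difference is that the paper integrates \eqref{wron} to write $\tilde D_{k,k_0}$ explicitly as $\pm\int \tilde\Phi_k^{\star}\tilde\Phi_{k_0}$ plus (when $k+k_0=n+1$) the term $(-1)^{k+1}/(\la-\mu)$, whereas you argue the removability/non-removability at $\la=\mu$ directly from the constancy of the bracket on the diagonal; the paper's integral representation is a mild bonus, since it is reused verbatim in the proof of Lemma~\ref{lem:resz}.
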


\begin{proof}
Using \eqref{wron} and \eqref{defD}, we obtain
$$
\frac{d}{dx} \tilde D_{k_0,k}(x, \mu, \la) = \tilde \Phi_k^{\star}(x, \mu) \tilde \Phi_{k_0}(x, \la).
$$
The boundary conditions \eqref{bcPhi} for $\tilde \Phi_k^{\star}(x, \mu)$ and $\tilde \Phi_{k_0}(x, \la)$ together with \eqref{defLagr} imply
\begin{align*}
\langle \tilde \Phi_k^{\star}(x, \mu), \tilde \Phi_{k_0}(x, \la) \rangle_{|x = 0} & = \begin{cases}
                        0, &  k + k_0 > n + 1, \\
                        (-1)^{k+1}, & k + k_0 = n + 1,
                    \end{cases}, \\
\langle \tilde \Phi_k^{\star}(x, \mu), \tilde \Phi_{k_0}(x, \la) \rangle_{|x = 1} & = 0, \quad k + k_0 < n + 1.
\end{align*}
Hence
$$
\tilde D_{k_0,k}(x, \mu, \la) = \begin{cases}
\int_0^x \tilde \Phi_k^{\star}(t, \mu) \tilde \Phi_{k_0}(t, \la) \, dt, & k + k_0 > n + 1, \\
\frac{(-1)^{k+1}}{\la - \mu} + \int_0^x \tilde \Phi_k^{\star}(t, \mu) \tilde \Phi_{k_0}(t, \la) \, dt, & k + k_0 = n + 1, \\
-\int_x^1 \tilde \Phi_k^{\star}(t, \mu) \tilde \Phi_{k_0}(t, \la) \, dt, & k + k_0 < n + 1.
\end{cases}
$$
Consequently, the singularities of $\tilde D_{k_0,k}(x, \la)$ coincide with the poles $\{ \tilde \la_{l,k}^{\star} \}_{l \ge 1}$ of $\tilde \Phi_k^{\star}(t, \mu)$ for $k < n$ and with the poles $\{ \tilde \la_{l,k_0} \}_{l \ge 1}$ of $\tilde \Phi_{k_0}(t,\la)$ for $k_0 < n$. In addition, $\la = \mu$ is a pole in the case $k + k_0 = n + 1$. By Lemma~\ref{lem:simps}, $\tilde \la_{l,k}^{\star} = \tilde \la_{l,n-k}$, which concludes the proof.
\end{proof}

For $(l,k,\eps), (l_0,k_0,\eps_0) \in V$, denote
\begin{equation}
\label{defG}
    \tilde G_{(l,k,\eps), (l_0, k_0, \eps_0)}(x) :=  
    (-1)^{n-k} \be_{l,k,\eps} \tilde D_{n-k+1, k_0 + 1}(x, \la_{l,k,\eps}, \la_{l_0,k_0,\eps_0}).
\end{equation}

By Lemma~\ref{lem:singD}, $(\mu, \la) = (\la_{l,k,\eps}, \la_{l_0,k_0,\eps_0})$ is a regular point of $\tilde D_{n-k+1,k_0 + 1}(x, \mu, \la)$, so the definition \eqref{defG} is correct.

\begin{prop}[\cite{Bond22-alg}]
The following relations hold:
\begin{align} \label{infphi}
\vv_{l_0, k_0,\eps_0}(x) & = \tilde \vv_{l_0,k_0,\eps_0}(x) + \sum_{(l,k,\eps) \in V}(-1)^{\eps} \vv_{l,k,\eps}(x) \tilde G_{(l,k,\eps), (l_0, k_0,\eps_0)}(x), \quad (l_0,k_0,\eps_0) \in V, \\ \label{recPhik}
\Phi_{k_0}(x, \la) & = \tilde \Phi_{k_0}(x, \la) + \sum_{(l,k,\eps) \in V} (-1)^{\eps + n - k} \be_{l,k,\eps} \tilde \vv_{l,k,\eps}(x) \tilde D_{n-k+1,k_0}(x, \la_{l,k,\eps}, \la), \quad k_0 = \overline{1,n}.
\end{align}
\end{prop}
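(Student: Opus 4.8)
The plan is to establish the two relations by the contour-integration technique of the method of spectral mappings, obtaining the reconstruction formula \eqref{recPhik} and its non-tilde companion for a generic $\la$, and then extracting \eqref{infphi} from the latter by specialization. The geometric input I would isolate first is a biorthogonality relation for the tilde Weyl solutions: repeating the endpoint computation from the proof of Lemma~\ref{lem:singD}, but now with $\la = \mu$ so that the Lagrange bracket is $x$-independent by \eqref{wron}, gives $\langle \tilde\Phi_k^\star(x,\mu),\tilde\Phi_{k_0}(x,\mu)\rangle = (-1)^{k+1}\de_{k+k_0,n+1}$. Thus $\{\tilde\Phi_k(x,\mu)\}_{k=1}^n$ and $\{(-1)^{k+1}\tilde\Phi_{n+1-k}^\star(x,\mu)\}_{k=1}^n$ are dual systems, so the inverse of the matrix $\tilde\Phi(x,\mu)$ is expressed through $\tilde\Phi^\star(x,\mu)$; this is precisely the mechanism that makes the kernels $\tilde D_{n-k+1,k_0}$ of \eqref{defD} enter the final formulas.

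The core is a Cauchy-type representation in the spectral variable. For fixed $x$, $\la$ and $k_0$ I would consider the vector function $g(\mu) := \Phi(x,\mu)\tilde\Phi(x,\mu)^{-1}\,[\tilde\Phi_{k_0}^{[j-1]}(x,\la)]_{j=1}^n$, which is meromorphic in $\mu$, equals $[\Phi_{k_0}^{[j-1]}(x,\la)]_{j=1}^n$ at $\mu = \la$ and, since both problems share the leading behaviour of $F^0$, satisfies $\Phi(x,\mu)\tilde\Phi(x,\mu)^{-1} \to I$ as $|\mu| \to \iy$. By \eqref{defM}, the $x$-dependent factor $\tilde{\mathcal C}(x,\mu)^{-1}\mathcal C(x,\mu)$ is entire, so the only poles of $g$ in $\mu$ are the two spectra $\{\la_{l,k}\}$ and $\{\tilde\la_{l,k}\}$, which are simple. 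Integrating $g(\mu)/(\mu-\la)$ over a family of expanding contours $\gamma_N$ in the $\mu$-plane (circles $|\rho| = R_N$ chosen between consecutive eigenvalues and inside the admissible regions $\Gamma_{s,\rho^*,\de}$) and applying the residue theorem yields
\begin{equation*}
[\Phi_{k_0}^{[j-1]}(x,\la)]_j = [\tilde\Phi_{k_0}^{[j-1]}(x,\la)]_j + \sum_{\mu_j} \Res_{\mu = \mu_j} \frac{g(\mu)}{\la - \mu},
\end{equation*}
once the integral over $\gamma_N$ is shown to tend to the free term.

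Evaluating the residues converts this into the series. At a pole $\la_{l,k}$ of $\Phi$, the residue relation $\Res_{\mu=\la_{l,k}}\Phi_k(x,\mu) = \be_{l,k}\Phi_{k+1}(x,\la_{l,k})$ (Lemma~3 of \cite{Bond22-alg} combined with \eqref{structN} and \eqref{betaM}) produces the $\eps = 0$ terms, while the poles of $\tilde\Phi(x,\mu)^{-1}$ at $\tilde\la_{l,k}$ produce the $\eps = 1$ terms with the opposite sign $(-1)^{\eps}$; the factor $(-1)^{n-k}$ and the kernel index $n-k+1$ come from the antidiagonal sign pattern of \eqref{defLagr} together with $\tilde\la_{l,k}^\star = \tilde\la_{l,n-k}$ (Lemma~\ref{lem:simps}). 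This gives the companion identity, in which the non-tilde solutions $\vv_{l,k,\eps}$ of \eqref{defvv} appear; setting $k_0 \to k_0+1$ and $\la = \la_{l_0,k_0,\eps_0}$ — a regular point of both sides by (A-2) and \eqref{difla} — and rewriting through \eqref{defvv} and \eqref{defG} yields exactly \eqref{infphi}. The relation \eqref{recPhik}, in which the tilde solutions $\tilde\vv_{l,k,\eps}$ appear instead, is produced by the same machinery applied to an integrand built from $\tilde\Phi(x,\mu)$ together with the difference $M(\mu) - \tilde M(\mu)$, whose $(k+1,k)$ entry carries residues $\be_{l,k}$ and $-\tilde\be_{l,k}$ at the two spectra; here one must additionally verify, using the biorthogonality and Lemma~\ref{lem:singD}, that the spurious poles carried by $\tilde\Phi_{k+1}$ and by $\tilde D_{n-k+1,k_0}$ cancel in the sum over $k$ and reassemble the free term $\tilde\Phi_{k_0}(x,\la)$.

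The main obstacle, to my mind, is the analytic control of the contour integral as $N \to \iy$. Because the Weyl solutions of different indices grow at the sharply different rates $|\exp(\rho\om_k x)|$ of Proposition~\ref{prop:estPhi}, the integrand cannot be bounded crudely; it must be estimated sector by sector in $\Gamma_{s,\rho^*,\de}$, and the decay of the remainder hinges on the cancellation of the dominant exponentials in $\Phi(x,\mu)\tilde\Phi(x,\mu)^{-1} - I$. Establishing this uniform decay (so that the integral over $\gamma_N$ indeed converges to the free term) and, in tandem, the uniform-in-$x$ convergence of the resulting series — which follows from the $l_2$-asymptotics \eqref{asymptla} and \eqref{asymptbe} — is the technical heart of the argument; the residue bookkeeping, while lengthy, is then routine.
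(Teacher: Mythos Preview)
This proposition is quoted from \cite{Bond22-alg} and is not proved in the present paper; there is therefore no ``paper's own proof'' to compare against. Your outline is the standard contour-integral derivation of the method of spectral mappings (the spectral-mapping matrix $\mathcal P(x,\mu)=\Phi(x,\mu)\tilde\Phi(x,\mu)^{-1}$, the biorthogonality $\langle\tilde\Phi_k^\star,\tilde\Phi_{k_0}\rangle=(-1)^{k+1}\de_{k+k_0,n+1}$, residue evaluation, and sectorial control of the remainder), which is exactly the scheme of \cite{Yur02, Bond22-alg}. In that sense your plan matches the source.

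Two remarks on accuracy. First, your function $g(\mu)$ does not literally equal the column $[\Phi_{k_0}^{[j-1]}(x,\la)]_j$ at $\mu=\la$; what one actually integrates is $\mathcal P(x,\mu)/(\la-\mu)$ itself, and the identity term comes from the limit $\mathcal P(x,\mu)\to I$ along the contours, after which one multiplies by $\tilde\Phi(x,\la)$. This is a bookkeeping slip, not a conceptual one. Second, relation \eqref{recPhik} (with $\tilde\vv$ rather than $\vv$ in the sum) is not obtained from a separate integrand involving $M(\mu)-\tilde M(\mu)$ as you suggest; in \cite{Bond22-alg} it comes from the \emph{inverse} spectral-mapping matrix $\tilde\Phi(x,\mu)\Phi(x,\mu)^{-1}$, i.e.\ from swapping the roles of $F$ and $\tilde F$ in the same contour argument and then relabelling. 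Your proposed route through $M-\tilde M$ would produce the right pole set but does not directly organize the residues into the $\tilde\vv_{l,k,\eps}\,\tilde D_{n-k+1,k_0}$ structure without an extra regrouping step; the symmetric swap is cleaner.

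The part you flag as the main obstacle --- the sectorwise decay of $\mathcal P(x,\mu)-I$ along expanding contours --- is indeed the technical heart, and your description of how Proposition~\ref{prop:estPhi} and the exponential cancellation are used is accurate.
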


The relation \eqref{infphi} can be treated as an infinite system of linear equations with respect to $\{ \vv_{l,k,\eps}(x) \}_{(l,k,\eps) \in V}$ for each fixed $x \in [0,1]$. This system plays an important role for solving inverse spectral problems (see \cite{Bond22-alg, Bond23-loc}). The relation \eqref{recPhik} can be used for finding the Weyl solutions $\{ \Phi_k(x, \la)\}_{k = 1}^n$ from the solution $\{ \vv_{l,k,\eps}(x) \}_{(l,k,\eps) \in V}$ of the system \eqref{infphi}. In order to study the solvability of the system \eqref{infphi}, we reduce it to a linear equation in a suitable Banach space by the method of \cite{Yur02, Bond22-alg}.

Define the numbers
\begin{equation} \label{defxi}
\xi_l  := \sum_{k = 1}^{n-1} \left( l^{-(n-1)} |\la_{l,k} - \tilde \la_{l,k}| + l^{-n} |\be_{l,k} - \tilde \be_{l,k}| \right), \quad l \ge 1, 
\end{equation}
and the functions
\begin{equation} \label{defw}
w_{l,k}(x) := l^{-k} \exp(-xl \cot(k\pi/n)).
\end{equation}
The numbers $\{ \xi_l \}_{l \ge 1}$ characterize the difference between the spectral data $\{ \la_{l,k}, \be_{l,k} \}_{l \ge 1, \, k = \overline{1,n-1}}$ and $\{ \tilde \la_{l,k}, \tilde \be_{l,k} \}_{l \ge 1, \, k = \overline{1,n-1}}$. The functions $w_{l,k}(x)$ are related to the growth of the functions $\vv_{l,k,\eps}(x)$: $|\vv_{l,k,\eps}(x)| \le C w_{l,k}(x)$. The latter estimate can be easily deduced from Proposition~\ref{prop:estPhi} and the asymptotics \eqref{asymptla}.

Apply the following transform to the functions in the system \eqref{infphi}:
\begin{equation} \label{defpsi}
\begin{bmatrix}
\psi_{l,k,0}(x) \\ \psi_{l,k,1}(x)
\end{bmatrix} := 
w_{l,k}^{-1}(x)
\begin{bmatrix}
\xi_l^{-1} & -\xi_l^{-1} \\ 0 & 1
\end{bmatrix}
\begin{bmatrix}
\vv_{l,k,0}(x) \\ \vv_{l,k,1}(x)
\end{bmatrix}, 
\end{equation}
\begin{multline} \label{defR}
\begin{bmatrix}
\tilde R_{(l_0,k_0,0),(l,k,0)}(x) & \tilde R_{(l_0,k_0,0),(l,k,1)}(x) \\
\tilde R_{(l_0,k_0,1),(l,k,0)}(x) & \tilde R_{(l_0,k_0,1),(l,k,1)}(x)
\end{bmatrix} := \\ 
\frac{w_{l,k}(x)}{w_{l_0,k_0}(x)}
\begin{bmatrix}
\xi_{l_0}^{-1} & -\xi_{l_0}^{-1} \\ 0 & 1
\end{bmatrix}
\begin{bmatrix}
\tilde G_{(l,k,0),(l_0,k_0,0)}(x) & \tilde G_{(l,k,1),(l_0,k_0,0)}(x) \\
\tilde G_{(l,k,0),(l_0,k_0,1)}(x) & \tilde G_{(l,k,1),(l_0,k_0,1)}(x)
\end{bmatrix}
\begin{bmatrix}
\xi_l & 1 \\ 0 & -1
\end{bmatrix}.
\end{multline}
Analogously to $\psi_{l,k,\eps}(x)$, define $\tilde \psi_{l,k,\eps}(x)$. 

For brevity, denote $v = (l,k,\eps)$, $v_0 = (l_0,k_0,\eps_0)$, $v,v_0 \in V$. Define the vectors $\psi(x) := [\psi_v(x)]_{v \in V}$ and $\tilde \psi(x) = [\tilde \psi_v(x)]_{v \in V}$.

Consider the Banach space $m$ of bounded infinite sequences $\al = [\al_v]_{v \in V}$ with the norm $\| \al \|_m = \sup_{v \in V}|\al_v|$. Define the linear operator $\tilde R(x) = [\tilde R_{v_0,v}(x)]_{v_0, v \in V}$ acting on an element $\al = [\al_v]_{v \in V}$ of $m$ by the following rule:
$$
(\tilde R(x) \al)_{v_0} = \sum_{v \in V} \tilde R_{v_0,v}(x) \al_v.
$$

\begin{prop}[\cite{Bond22-alg}] \label{prop:Rpsi}
For each fixed $x \in [0,1]$, the vectors $\psi(x)$ and $\tilde \psi(x)$ belong to $m$ and $\tilde R(x)$ is a bounded operator from $m$ to $m$. Moreover, the operator $\tilde R(x)$ can be approximated by finite-dimensional operators with respect to the operator norm $\| . \|_{m \to m}$, so $\tilde R(x)$ is compact.
\end{prop}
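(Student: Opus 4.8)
The plan is to deduce all three assertions from two pointwise estimates for the functions $\vv_{l,k,\eps}(x)$: the growth bound $|\vv_{l,k,\eps}(x)| \le C w_{l,k}(x)$ recorded after \eqref{defw}, and a companion difference bound
\[
|\vv_{l,k,0}(x) - \vv_{l,k,1}(x)| \le C \xi_l w_{l,k}(x),
\]
together with their tilde analogues. Granting these, the membership $\psi(x), \tilde\psi(x) \in m$ is immediate from \eqref{defpsi}: the component $\psi_{l,k,1}(x) = w_{l,k}^{-1}(x)\vv_{l,k,1}(x)$ is bounded by the growth estimate, while $\psi_{l,k,0}(x) = w_{l,k}^{-1}(x)\xi_l^{-1}(\vv_{l,k,0}(x)-\vv_{l,k,1}(x))$ is bounded by the difference estimate, and taking the supremum over $(l,k,\eps) \in V$ yields $\| \psi(x) \|_m \le C$; the same computation handles $\tilde\psi(x)$.

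To obtain the difference bound, I would use that $\vv_{l,k,\eps}(x) = \Phi_{k+1}(x, \la_{l,k,\eps})$ differs between $\eps = 0,1$ only through the spectral argument, and that by (A-2) and \eqref{difla} both points $\la_{l,k}, \tilde\la_{l,k}$ lie in a region where $\Phi_{k+1}(x, \cdot)$ is analytic and Proposition~\ref{prop:estPhi} applies. A Cauchy estimate on a contour of radius $\asymp \de$ around the segment joining $\la_{l,k}$ and $\tilde\la_{l,k}$ then bounds $|\partial_{\la}\Phi_{k+1}(x,\la)|$ by $C l^{-(n-1)} w_{l,k}(x)$, the factor $l^{-(n-1)}$ arising from $\partial_{\la} = (n\rho^{n-1})^{-1}\partial_{\rho}$ and $\rho_{l,k} \asymp l$ by \eqref{asymptla}. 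Combining this with $|\la_{l,k}-\tilde\la_{l,k}| \le l^{n-1}\xi_l$, which holds by the very definition \eqref{defxi}, gives the claim.

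For boundedness of $\tilde R(x)$ I would multiply out the three matrices in \eqref{defR} and observe that the left and right factors are engineered so that each singular prefactor $\xi_{l_0}^{-1}$ or $\xi_l$ acts on a first or second finite difference (in the parameters $\la_{l,k,\eps}$, $\la_{l_0,k_0,\eps_0}$ and the numbers $\be_{l,k,\eps}$) of the kernel $\tilde G_{(l,k,\eps),(l_0,k_0,\eps_0)}(x)$. Each such difference is $O(\xi_l)$ or $O(\xi_{l_0})$ by the same analyticity argument applied to $\tilde D_{n-k+1,k_0+1}(x,\mu,\la)$ and to the weight numbers, so the apparent singularities cancel. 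Using the integral representations of $\tilde D_{n-k+1,k_0+1}$ from the proof of Lemma~\ref{lem:singD} together with Proposition~\ref{prop:estPhi} and the asymptotics \eqref{asymptla}, \eqref{asymptbe}, I would derive estimates of the form $|\tilde R_{v_0,v}(x)| \le C\,\xi_l\,u_{l,l_0}(x)$, where the decay kernel $u_{l,l_0}$ collects the exponential ratio $w_{l,k}(x)/w_{l_0,k_0}(x)$ and the $|\la_{l,k}-\la_{l_0,k_0}|^{-1}$-type factor, and satisfies $\sup_{l_0, x}\sum_l u_{l,l_0}(x)^2 < \infty$. Since $\{\xi_l\} \in l_2$ (a consequence of \eqref{asymptla} and \eqref{asymptbe}), Cauchy--Schwarz gives $\sup_{v_0}\sum_v |\tilde R_{v_0,v}(x)| \le C \| \xi \|_{l_2}\,\sup_{l_0}\| u_{\cdot,l_0}(x) \|_{l_2} < \infty$ uniformly in $x$, so $\tilde R(x)$ is bounded on $m$.

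Compactness would then follow by truncation: let $\tilde R^{(N)}(x)$ retain only the entries with $l, l_0 \le N$, a finite-rank operator, and note that the same summable estimates give $\| \tilde R(x) - \tilde R^{(N)}(x) \|_{m \to m} = \sup_{v_0}\sum_{v:\, l > N}|\tilde R_{v_0,v}(x)| \to 0$ as $N \to \infty$, whence $\tilde R(x)$ is a norm-limit of finite-rank operators and therefore compact. The hard part will be the boundedness step, on two counts: verifying that the transformation \eqref{defpsi}--\eqref{defR} really does convert every singular factor $\xi^{-1}$ into a compensating finite difference, and producing kernel bounds on $\tilde D_{n-k+1,k_0+1}$ with enough joint decay in $(l,k)$ and $(l_0,k_0)$, uniform in $x \in [0,1]$, for the double series to converge. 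The most delicate contributions are the near-diagonal terms with $l \approx l_0$ and $k + k_0 = n$, where the factor $|\la_{l,k}-\la_{l_0,k_0}|^{-1}$ of $\tilde D$ degenerates and must be absorbed by the accompanying difference of spectral data.
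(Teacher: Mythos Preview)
The paper does not actually prove this proposition: it is quoted verbatim from \cite{Bond22-alg} and stated without argument. Consequently there is no in-paper proof to compare against. Your outline is the standard route one expects in the cited source, and the key mechanisms are identified correctly: the transforms \eqref{defpsi}--\eqref{defR} are designed precisely so that every $\xi_l^{-1}$ or $\xi_{l_0}^{-1}$ lands on a finite difference in the spectral arguments, and the Schwarz-lemma/Cauchy-estimate argument you describe is exactly what the present paper invokes (cf.\ the passage around \eqref{estD2}) to control such differences. Your derivation of $|\vv_{l,k,0}(x)-\vv_{l,k,1}(x)|\le C\xi_l w_{l,k}(x)$ from Proposition~\ref{prop:estPhi}, the asymptotics \eqref{asymptla}, and the definition \eqref{defxi} is correct in structure.

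One small wrinkle in the compactness step: if you truncate in \emph{both} indices $l,l_0\le N$, then the remainder $\tilde R(x)-\tilde R^{(N)}(x)$ also contains the block $\{l_0>N,\ l\le N\}$, which your tail sum $\sup_{v_0}\sum_{v:\,l>N}|\tilde R_{v_0,v}(x)|$ does not cover. The clean fix is to truncate only in the column index $l\le N$; the resulting operator factors through the finite-dimensional space of sequences supported on $\{v:l\le N\}$, hence is finite-rank, and then $\|\tilde R(x)-\tilde R^{(N)}(x)\|_{m\to m}=\sup_{v_0}\sum_{v:\,l>N}|\tilde R_{v_0,v}(x)|\to 0$ follows directly from your estimate $|\tilde R_{v_0,v}(x)|\le C\xi_l\,u_{l,l_0}(x)$ with $\{\xi_l\}\in l_2$ and $\sup_{l_0}\sum_l u_{l,l_0}(x)^2<\infty$, via Cauchy--Schwarz applied to the tail of $\xi$.
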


\begin{prop}[\cite{Bond22-alg}] \label{prop:maineq}
Suppose that $F, \tilde F \in \mathfrak F_{n,simp}$. Let $\psi(x)$, $\tilde \psi(x)$, and $\tilde R(x)$ be constructed by using the matrix functions $F(x)$, $\tilde F(x)$ and their spectral data as described above. Then, for each fixed $x \in [0,1]$, the following relation is fulfilled
\begin{equation} \label{main}
(I - \tilde R(x)) \psi(x) = \tilde \psi(x)
\end{equation}
in the Banach space $m$, where $I$ is the identity operator. Furthermore, for each fixed $x \in [0,1]$, the operator $(I - \tilde R(x))$ has a bounded inverse, so equation \eqref{main} is uniquely solvable with respect to $\psi(x)$.
\end{prop}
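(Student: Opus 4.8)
The plan is to prove the statement in two stages: first, that the operator equation \eqref{main} is nothing but a rewriting of the infinite linear system \eqref{infphi} under the change of variables \eqref{defpsi}--\eqref{defR}; second, that $I - \tilde R(x)$ is boundedly invertible.

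For the first stage I would recast \eqref{infphi} in block form, grouping the two values $\eps = 0, 1$ into column vectors $\vec\vv_{l,k}(x) := [\vv_{l,k,\eps}(x)]_{\eps = 0}^{1}$ and similarly $\vec{\tilde\vv}_{l,k}(x)$. The factor $(-1)^{\eps}$ in the summand turns into left multiplication by $D := \diag(1,-1)$, so that \eqref{infphi} becomes
\[
\vec\vv_{l_0,k_0}(x) = \vec{\tilde\vv}_{l_0,k_0}(x) + \sum_{l,k} \tilde{\mathbf G}_{(l,k),(l_0,k_0)}(x)\, D\, \vec\vv_{l,k}(x),
\]
where $\tilde{\mathbf G}$ is the $2 \times 2$ block assembling the four values $\tilde G_{(l,k,\eps),(l_0,k_0,\eps_0)}$. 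Writing $A_l$ for the matrix appearing in \eqref{defpsi}, the transform reads $\vec\psi_{l,k} = w_{l,k}^{-1} A_l \vec\vv_{l,k}$, whence $\vec\vv_{l,k} = w_{l,k} A_l^{-1}\vec\psi_{l,k}$. Multiplying the block system on the left by $w_{l_0,k_0}^{-1} A_{l_0}$ and substituting this expression for $\vec\vv_{l,k}$ into the sum reproduces \eqref{main} exactly, provided the middle factor coincides with the block in \eqref{defR}; this comes down to the single matrix identity $D A_l^{-1} = B_l$, where $B_l$ is the right factor in \eqref{defR}. That identity is verified at once from the explicit form of $A_l^{-1}$. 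Absolute convergence of the sums, which legitimizes the rearrangement, is already furnished by Proposition~\ref{prop:Rpsi}.

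For the second, more substantial stage I would first invoke the compactness of $\tilde R(x)$ from Proposition~\ref{prop:Rpsi}: by the Fredholm alternative, $I - \tilde R(x)$ has a bounded inverse as soon as it is injective. To settle injectivity I would use the symmetry between $F$ and $\tilde F$. Since the numbers $\xi_l$ in \eqref{defxi} are invariant under interchanging the two spectral data sets, swapping $F \leftrightarrow \tilde F$ interchanges $\vv \leftrightarrow \tilde\vv$, hence $\psi \leftrightarrow \tilde\psi$, while the kernel $\tilde G$ is replaced by its analogue $G$ built from the spectral data of $F$ itself. Applying the first stage to the swapped pair $(\tilde F, F)$ then yields a second, reverse main equation $(I - R(x))\tilde\psi(x) = \psi(x)$ with a compact operator $R(x)$.

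The crux is the operator identity
\[
(I - R(x))(I - \tilde R(x)) = (I - \tilde R(x))(I - R(x)) = I, \qquad x \in [0,1],
\]
equivalently $R(x) + \tilde R(x) = R(x)\tilde R(x)$, which I would establish by directly composing the two kernels. Here both families of relations among the Weyl solutions enter: \eqref{infphi} and \eqref{recPhik} for the pair $(F,\tilde F)$ together with their swapped counterparts. Substituting \eqref{recPhik} into the defining sums and regrouping produces a double sum over $V \times V$; the products of the auxiliary functions $\tilde D$ are then collapsed with the help of the singularity structure recorded in Lemma~\ref{lem:singD} and the residue relations reflecting the pole positions of the Weyl solutions. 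I expect this double-sum collapse to be the main obstacle: one must both justify absolute convergence --- relying on the estimates of Proposition~\ref{prop:estPhi} and on the normalizing factors $w_{l,k}$ and $\xi_l$ --- and check that the cross terms telescope to the Kronecker symbol. Once the identity is secured, $I - R(x)$ serves as a two-sided bounded inverse of $I - \tilde R(x)$, giving the unique solvability of \eqref{main} and completing the proof.
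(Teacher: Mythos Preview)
Your proposal is correct and follows the route the paper indicates. The paper does not prove Proposition~\ref{prop:maineq} itself but imports it from \cite{Bond22-alg}, remarking only that \eqref{main} is obtained from \eqref{infphi} via the transforms \eqref{defpsi}--\eqref{defR} (your first stage) and that $(I-\tilde R(x))^{-1}$ is found \emph{explicitly} in \cite[Theorem~1]{Bond22-alg}; your second stage---building the two-sided inverse $I-R(x)$ from the swapped pair $(\tilde F,F)$---is exactly that construction, so the Fredholm detour is in fact unnecessary once you have the identity $(I-R(x))(I-\tilde R(x))=I$.
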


The relation \eqref{main} is called \textit{the main equation} of the inverse problem. Obviously, \eqref{main} is deduced from the system \eqref{infphi} by using the notations \eqref{defpsi} and \eqref{defR}. It is worth mentioning that, in \cite{Bond22-alg}, the results of Propositions \ref{prop:Rpsi} and \ref{prop:maineq} were obtained for the general case without the separation assumption (A-2). However, this assumption simplifies the form of the functions $\tilde G_{(l,k,\eps),(l_0,k_0,\eps_0)}(x)$ and $\tilde R_{v_0,v}(x)$, which will be used in the proofs. Furthermore, it is important to note that the unique solvability of the main equation \eqref{main} was proved in \cite{Bond22-alg} under the assumption that $\{ \la_{l,k}, \be_{l,k} \}$ are the spectral data of some problems $\mathcal L_k$, $k = \overline{1,n-1}$. In this case, the inverse operator $(I - \tilde R(x))^{-1}$ can be found explicitly (see \cite[Theorem~1]{Bond22-alg}). But, in this paper, we will consider the main equation \eqref{main} constructed by numbers $\{ \la_{l,k}, \be_{l,k} \}$ that are not necessarily related to some matrix function $F(x)$ and obtain sufficient conditions for the invertibility of the operator $(I - \tilde R(x))$.

\section{Main equation solvability} \label{sec:inverse}

In this section, we prove the unique solvability of the main equation \eqref{main} under some simple conditions on the given data $\{ \la_{l,k}, \be_{l,k} \}_{l \ge 1\, k= \overline{1,n-1}}$. These conditions include only the asymptotics and some structural properties. We emphasize that the numbers $\{ \la_{l,k}, \be_{l,k} \}_{l \ge 1\, k= \overline{1,n-1}}$ are not assumed to be the spectral data of the system \eqref{sys}. The main results are formulated in Theorem~\ref{thm:inverse}. Its proof contains several lemmas and their proofs inside of it. A reader can skip the proofs of those auxiliary lemmas to get the main idea.
The central role in the proofs is played by the meromorphic functions $B_j(x, \la)$, $j = \overline{1,n}$ defined by \eqref{defB}. On the one hand, these functions are estimated as $|\la| \to \infty$ and it is shown that their integrals over large contours tend to zero. On the other hand, those integrals are calculated by using the Residue Theorem. This idea arises from the proofs for $n = 2$ (see Lemma 1.3.6 in \cite{Yur02} and Lemma 5.2 in \cite{Bond21-tamkang}) and $n = 3$ (see Lemma 6.1 in \cite{Bond23-res}). However, the generalization to the case of arbitrary integer $n$ requires much technical work.

\begin{thm} \label{thm:inverse}
Suppose that $\tilde F \in \mathfrak F_{n,simp}^+$ and complex numbers $\{ \la_{l,k}, \be_{l,k} \}_{l \ge 1, \, k = \overline{1,n}}$ satisfy the assumptions (A-1) and (A-2), the asymptotics \eqref{asymptla} and \eqref{asymptbe}, the self-adjointness conditions \eqref{sasd}, the additional requirements
\begin{equation} \label{addhyp}
\begin{array}{l}
\text{if} \:\: n = 2p \colon  \quad (-1)^{p+1}\be_{l,p} > 0, \quad l \ge 1, \\
\text{if} \:\: n = 2p+1 \colon  \quad (-1)^{p+1} \mbox{Re}\, \la_{l,p} > 0,  \quad l \ge 1,
\end{array}
\end{equation}
and $\be_{l,k} \ne 0$ for $l \ge 1$, $k = \overline{1,n-1}$. Then, the linear operator $(I - \tilde R(x))$, which is constructed according to Section~\ref{sec:main}, has a bounded inverse operator in the Banach space $m$ for each fixed $x \in [0,1]$.
\end{thm}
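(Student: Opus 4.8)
The plan is to use the Fredholm alternative. Since $\tilde R(x)$ is compact by Proposition~\ref{prop:Rpsi}, for each fixed $x$ the operator $(I - \tilde R(x))$ has a bounded inverse if and only if it is injective. Hence it suffices to show that the homogeneous equation $(I - \tilde R(x)) \al = 0$ has only the trivial solution $\al = 0$ in $m$. Fix $x \in [0,1]$ and suppose $\al = [\al_{l,k,\eps}]_{(l,k,\eps) \in V} \in m$ satisfies this homogeneous equation.

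First I would undo the transformations \eqref{defpsi}--\eqref{defR} applied to $\al$, producing functions $\{ \ga_{l,k,\eps}(x) \}$ that play the role of $\vv_{l,k,\eps}(x)$ and satisfy the homogeneous counterpart of the linear system \eqref{infphi}, i.e.\ \eqref{infphi} with the inhomogeneous terms $\tilde \vv_{l_0,k_0,\eps_0}$ removed. The growth bound $|\vv_{l,k,\eps}(x)| \le C w_{l,k}(x)$ transfers to the $\ga_{l,k,\eps}(x)$ through the weights \eqref{defw}. Using these functions together with the reference solutions $\tilde \Phi^{\star}_j(x, \la)$ and the kernels $\tilde D$ of \eqref{defD}, I would assemble the meromorphic functions $B_j(x, \la)$, $j = \overline{1,n}$, arranging that each $B_j(x, \cdot)$ is meromorphic in $\la$ with poles only at the points $\{ \la_{l,k} \}$ and with residues expressible through the $\ga_{l,k,\eps}(x)$ and the weight numbers $\be_{l,k}$.

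Next I would estimate $B_j(x, \la)$ as $|\la| \to \infty$. Choosing a sequence of expanding contours $\{ |\la| = R_N \}$ that avoid neighbourhoods of the poles (using the separation built into \eqref{defGs} and the asymptotics \eqref{asymptla}) and combining Proposition~\ref{prop:estPhi} with the weight structure \eqref{defw}, I would show that $\oint_{|\la| = R_N} B_j(x, \la) \, d\la \to 0$ as $N \to \infty$. On the other hand, the Residue Theorem evaluates the same integrals as $2\pi i$ times the sum of the residues over all poles $\{ \la_{l,k} \}$. Equating the two expressions yields an identity in which the only surviving data are the functions $\ga_{l,k,\eps}(x)$ weighted by $\be_{l,k}$.

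The crucial step, and the main obstacle, is to transform this residue identity into a manifestly sign-definite quantity. Here the self-adjointness symmetry \eqref{sasd} is used to pair the index $k$ with $n - k$, collapsing the sum into one of the form $\sum_{l,k} (\text{positive weight}) \, |\ga_{l,k,\eps}(x)|^2$ or its analogue, where the positivity of each weight is guaranteed precisely by the additional requirements \eqref{addhyp}: the sign condition on $\be_{l,p}$ in the even case $n = 2p$ and on $\mbox{Re}\, \la_{l,p}$ in the odd case $n = 2p+1$. Since such a sum of non-negative terms equals zero, every term must vanish, forcing $\ga_{l,k,\eps}(x) \equiv 0$ and hence $\al = 0$. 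The difficulty lies in (i) defining $B_j$ with exactly the right analytic and pole structure so that the residue bookkeeping produces these squared moduli, and (ii) securing the uniform estimates needed to annihilate the contour integrals, which is delicate because the bounds of Proposition~\ref{prop:estPhi} are only sectorial and must be patched consistently across the sectors $\Gamma_s$.
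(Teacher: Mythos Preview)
Your overall strategy matches the paper's: Fredholm alternative, undo the transform, build meromorphic functions, contour integrate, use residues plus the sign conditions \eqref{addhyp}. But there is a real gap in the step you call ``the crucial step,'' and it is not merely a matter of bookkeeping.

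The residue identity does \emph{not} collapse into a single sum $\sum_{l,k}(\text{positive weight})|\ga_{l,k,\eps}(x)|^2$ over all $k$. The sign hypothesis \eqref{addhyp} only controls the middle index $k=p$ (or $k=p,p+1$ when $n$ is odd); there is no positivity available for the other values of $k$. In the paper the intermediate functions $Z_{k_0}(x,\la)$ (built from the $\ga$'s and the kernels $\tilde D_{n-k+1,k_0}$, playing the role of the absent Weyl solutions) are shown, after nontrivial work, to have simple poles \emph{only} at $\{\la_{l,k_0,0}\}$; then $B_j(x,\la):=Z_j(x,\la)\overline{Z_{n-j+1}(x,(-1)^n\overline\la)}$ has the telescoping residue property $\Res_{\la=\la_{l,j,0}}B_j=\Res_{\la=\la_{l,j,0}}B_{j+1}$. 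Taking the alternating combination $\sum_{j=1}^p(-1)^jB_j$ (even $n$) makes everything cancel except the single term $\sum_l\be_{l,p}|z_{l,p,0}(x)|^2=0$, and \eqref{addhyp} then forces only $z_{l,p,0}(x)=0$. For odd $n$ the argument is more delicate still: one integrates $B_{p+1}$ over a half-plane bounded by the imaginary axis and obtains $\int_{\mathbb R}|Z_{p+1}(x,i\tau)|^2\,d\tau=0$, hence $Z_{p+1}\equiv 0$.

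What you are missing is the second half of the proof: having killed only the middle index, one must \emph{propagate} outward to all other $k$. This requires two separate Liouville-type lemmas. Upward ($k\ge n-p$): once $z_{l,k,0}=0$, the function $Z_{k+1}(x,\la)\,d_{k+1}(\la)/d_k(\la)$ becomes entire and, by the growth estimate together with the asymptotics of the infinite products $d_j$, decays at infinity, hence vanishes identically; this yields $z_{l,k,1}=0$ and $z_{l,k+1,0}=0$. Downward ($k\le p$): once $z_{l,k,0}=0$, the function $Z_k(x,\la)$ is entire and bounded, hence zero, giving $z_{l,k-1,\eps}=0$. Without these propagation steps the positivity argument leaves almost all of the $\ga_{l,k,\eps}$ untouched, and your sketch does not account for them. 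A secondary gap is that the pole structure you assume for $B_j$ (``poles only at $\{\la_{l,k}\}$'') is itself a lemma: a priori each $Z_{k_0}$ could have poles at all $\la_{l,k,\eps}$ with $k\ge k_0$ and both $\eps$, and showing that the $\eps=1$ poles and the $k>k_0$ poles cancel requires the structural identity \eqref{structN} for the model problem.
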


\begin{proof}[Proof of Theorem~\ref{thm:inverse}]
Fix $x \in [0,1]$. Consider the operator $(I - \tilde R(x))$ satisfying the hypotheses of the theorem. By virtue of Proposition~\ref{prop:Rpsi}, the operator $\tilde R(x)$ possesses the approximation property, so the Fredholm Theorem can be applied. Therefore, it is sufficient to prove that the homogeneous equation
\begin{equation} \label{homo}
(I - \tilde R(x)) \zeta(x) = 0, 
\end{equation}
has the only solution $\zeta(x) = 0$ in $m$.

Let $\zeta(x) = [\zeta_v(x)]_{v \in V} \in m$ be a solution of \eqref{homo}. This means
\begin{equation} \label{syszeta}
\zeta_{v_0}(x) = \sum_{v \in V} \tilde R_{v_0, v}(x) \zeta_v(x), \quad v_0 \in V.
\end{equation}

Apply the transform
$$
\begin{bmatrix}
z_{l,k,0}(x) \\ z_{l,k,1}(x)
\end{bmatrix} := w_{l,k}(x) 
\begin{bmatrix}
\xi_l & 1 \\
0 & 1
\end{bmatrix}
\begin{bmatrix}
\zeta_{l,k,0}(x) \\
\zeta_{l,k,1}(x)
\end{bmatrix},
$$
which is inverse to the transform in \eqref{defpsi}. Using \eqref{syszeta} and \eqref{defR}, we obtain the infinite system
\begin{equation} \label{infz}
z_{l_0,k_0,\eps_0}(x) = \sum_{(l,k,\eps) \in V} (-1)^{\eps} z_{l,k,\eps}(x) \tilde G_{(l,k,\eps), (l_0,k_0,\eps_0)}(x), \quad (l_0, k_0, \eps_0) \in V,
\end{equation}
which is the homogeneous analog of \eqref{infphi}. Since $\zeta(x) \in m$, then $|\zeta_{l,k,\eps}(x)| \le C$ and so
\begin{equation} \label{estz}
|z_{l,k,\eps}(x)| \le C w_{l,k}(x), \quad |z_{l,k,0}(x) - z_{l,k,1}(x)| \le C \xi_l w_{l,k}(x), \quad (l,k,\eps) \in V,
\end{equation}
where $\xi_l$ and $w_{l,k}(x)$ are defined in \eqref{defxi} and \eqref{defw}, respectively.

Introduce the functions
\begin{equation} \label{defZ}
Z_{k_0}(x, \la) := \sum_{(l,k,\eps) \in V} (-1)^{\eps + n -k} \be_{l,k,\eps} z_{l,k,\eps}(x) \tilde D_{n-k+1,k_0}(x, \la_{l,k,\eps}, \la), \quad k_0 = \overline{1,n},
\end{equation}
analogously to \eqref{recPhik}. It follows from \eqref{infz} and \eqref{defZ} that 
\begin{equation} \label{relZzk}
Z_{k + 1}(x, \la_{l,k,\eps}) = z_{l,k,\eps}(x), \quad (l,k,\eps) \in V.
\end{equation}

The following lemma shows that the functions $\{ Z_k(x, \la) \}_{k = 1}^n$ have the same growth for $|\la| \to \infty$ as the corresponding Weyl solutions $\{ \Phi_k(x, \la) \}_{k = 1}^n$ (see Proposition~\ref{prop:estPhi}).

\begin{lem} \label{lem:estZ}
For $k_0 = \overline{1,n}$, the function $Z_{k_0}(x, \la)$ satisfies the estimate
\begin{align} \label{estZ}
|Z_{k_0}(x, \rho^n)| & \le \sum_{l = 1}^{\infty} \sum_{k = 1}^{n-1} \frac{C \xi_l |\rho|^{-(k_0 - 1)} |\exp(\rho \om_{k_0} x)|}{|\rho - c_k l| + 1}, \\ \label{estZ2} & \le C |\rho|^{-(k_0-1)} |\exp(\rho \om_{k_0} x)|,
\quad x \in [0,1], \quad 
\rho \in \overline{\Gamma}_{s,\rho^*,\de},
\end{align}
for each fixed $s = \overline{1,m}$, a sufficiently small $\de > 0$, and some $\rho^* > 0$, where the region $\Gamma_{s, \rho^*, \de}$ was defined in \eqref{defGs}, the roots $\{ \om_k \}_{k = 1}^n$ are numbered in the order \eqref{order} associated with the sector $\Gamma_s$, and $\{ c_k \}_{k = 1}^{n-1}$ are such constants that $\rho_{l,k}^0 \sim c_k l$ as $l \to \infty$. (In view of the asymptotics \eqref{asymptla}, $c_k = \frac{\pi}{\sin\frac{\pi k}{n}} \epsilon_k$, where $|\epsilon_k| = 1$ and $\arg \epsilon_k$ depends on $\Gamma_s$).
\end{lem}

\begin{proof}
Let us estimate the series in \eqref{defZ}, which can be represented as follows:
\begin{align} \nonumber
Z_{k_0}(x) = & \sum_{l = 1}^{\infty} \sum_{k = 1}^{n-1} (-1)^{n-k} \bigl( (\be_{l,k,0} - \be_{l,k,1}) z_{l,k,0}(x) \tilde D_{n-k+1, k_0}(x, \la_{l,k,0}, \la) \\ \nonumber
& + \be_{l,k,1} (z_{l,k,0}(x) - z_{l,k,1}(x)) \tilde D_{n-k+1, k_0}(x, \la_{l,k,0}, \la) \\ \label{seriesZ}
& + \be_{l,k,1} z_{l,k,1}(x) (\tilde D_{n-k+1, k_0}(x, \la_{l,k,0}, \la) - \tilde D_{n-k+1,k_0}(x, \la_{l,k,1}, \la) \bigr).
\end{align}

We begin with the functions $\tilde D_{n-k+1, k_0}(x, \la_{l,k,\eps}, \la)$. Due to \eqref{defD} and \eqref{defLagr}, we have
\begin{align} \nonumber
\tilde D_{n-k+1,k_0}(x, \la_{l,k,\eps}, \la) & = \frac{\langle \tilde \Phi^{\star}_{n-k+1}(x, \la_{l,k,\eps}), \tilde \Phi_{k_0}(x, \la)\rangle}{\la - \la_{l,k,\eps}} \\ \label{relD}
& = \frac{1}{\la - \la_{l,k,\eps}} \sum_{j = 0}^{n-1} (-1)^j \tilde \Phi_{n-k+1}^{\star[j]}(x, \la_{l,k,\eps}) \tilde \Phi_{k_0}^{[n - j - 1]}(x, \la).
\end{align}

Obviously, the estimate of Proposition~\ref{prop:estPhi} is valid for $\tilde \Phi^{\star}_{n-k+1}(x, \la)$. Taking the asymptotics \eqref{asymptla} for $\la_{l,k,\eps}$ and the definition \eqref{defw} into account, we obtain
\begin{equation} \label{estPhi*}
|\tilde \Phi_{n-k+1}^{\star[j]}(x, \la_{l,k,\eps})| \le C |\rho_{l,k,\eps}|^{j - (n-k)} |\exp(\rho_{l,k,\eps} \om_{n-k+1} x)| \le C l^{j - n} w_{l,k}^{-1}(x),
\end{equation}
where $\rho_{l,k,\eps} = \sqrt[n]{\la_{l,k,\eps}}$.

Using \eqref{asymptla}, \eqref{relD}, \eqref{estPhi*}, and Proposition~\ref{prop:estPhi}, we get
\begin{align} \nonumber
|\tilde D_{n-k+1,k_0}(x, \la_{l,k,\eps}, \la)| & \le \frac{C \sum\limits_{j = 0}^{n-1} l^{j-n} w_{l,k}^{-1}(x) |\rho|^{n-j-k_0}|\exp(\rho \om_{k_0} x)|}{|\rho - \rho_{l,k,\eps}| \sum\limits_{j = 0}^{n-1}|\rho|^{n-j-1} l^j} \\ \label{estD1} & \le \frac{C |\rho|^{-(k_0-1)} l^{-n} w_{l,k}^{-1}(x) |\exp(\rho \om_{k_0} x)|}{|\rho - \rho_{l,k}^0|}, \quad \rho \in \overline{\Gamma}_{s, \rho^*, \de}, \quad \la = \rho^n.
\end{align}

It follows from \eqref{defxi} that $|\rho_{l,k,0} - \rho_{l,k,1}| \le C \xi_l$, where we choose the same branch of the root $\sqrt[n]{\la_{l,k,\eps}}$ for $\eps = 0, 1$.
Consequently, using the standard approach based on Schwarz's Lemma (see \cite[Lemmas 1.3.1 and 1.3.2]{Yur02}), we estimate the difference
\begin{equation} \label{estD2}
|\tilde D_{n-k+1, k_0}(x, \la_{l,k,0}, \la) - \tilde D_{n-k+1, k_0}(x, \la_{l,k,1}, \la)| \le \frac{C |\rho|^{-(k_0-1)} \xi_l l^{-n} w_{l,k}^{-1}(x) |\exp(\rho \om_{k_0} x)|}{|\rho - \rho_{l,k}^0|}
\end{equation}
for $\rho \in \overline{\Gamma}_{s, \rho^*, \de}$. In view of \eqref{asymptbe} and \eqref{defxi}, we have
\begin{equation} \label{estbeta}
|\be_{l,k,\eps}| \le C l^n, \quad |\be_{l,k,0} - \be_{l,k,1}| \le C l^n \xi_l.
\end{equation}

Using the estimates \eqref{estz}, \eqref{estD1}, \eqref{estD2}, and \eqref{estbeta} together with \eqref{seriesZ}, we arrive at \eqref{estZ}.

According to the asymptotics \eqref{asymptla} and \eqref{asymptbe} for the data $\{ \la_{l,k}, \be_{l,k} \}_{l \ge 1, \, k = \overline{1,n-1}}$ and
$\{ \tilde \la_{l,k}, \tilde \be_{l,k} \}_{l \ge 1, \, k = \overline{1,n-1}}$, we have $\{ \xi_l \} \in l_2$. Hence, the Cauchy-Bunyakovsky-Schwartz inequality implies
$$
\sum_{l,k} \frac{\xi_l}{|\rho - c_k l| + 1} \le \sqrt{\sum_l \xi_l^2} \sqrt{\sum_{l,k} \frac{1}{(|\rho - c_k l| + 1)^2}} \le C,
$$
which proves the estimate \eqref{estZ2}.
\end{proof}

Our next goal is to study analytic properties of the functions $Z_k(x, \la)$, $k = \overline{1,n}$. For this purpose, we consider the auxiliary functions
\begin{align} \label{defE}
& \tilde E_{k,k_0}(x, \mu, \la) := \frac{\langle \tilde \Phi^{\star}_k(x, \mu), \tilde {\mathcal C}_{k_0}(x, \la) \rangle}{\la - \mu}, \\ \label{defz}
& z_{k_0}(x, \la) := \sum_{(l,k,\eps) \in V} (-1)^{\eps + n - k} \be_{l,k,\eps} z_{l,k,\eps}(x) \tilde E_{n-k+1, k_0}(x, \la_{l,k,\eps}, \la).
\end{align}

Thus, the functions $z_{k_0}(x, \la)$ are defined analogously to $Z_{k_0}(x, \la)$ by replacing $\tilde \Phi_{k_0}(x, \la)$ by $\tilde{\mathcal C}_{k_0}(x, \la)$. It is easier to consider the functions $z_{k_0}(x, \la)$ than $Z_{k_0}(x, \la)$, because the functions $\{ \tilde{\mathcal C_k}(x, \la) \}_{k = 1}^n$ are entire in $\la$. Without loss of generality we assume that $\la_{l,k,\eps} \ne \la_{l_0,k_0,\eps}$ for $l \ne l_0$.

\begin{lem} \label{lem:resz}
For $k_0 = \overline{1,n}$ and each fixed $x \in [0,1]$, the function $z_{k_0}(x, \la)$ is analytic in the $\la$-plane except for the simple poles $\{ \la_{l,k,\eps} \}_{l \ge 1,  \, k = \overline{k_0,n-1}}$. In particular, $z_n(x,\la)$ is entire in $\la$. Moreover,
\begin{equation} \label{resz}
\Res_{\la = \la_{l,k,\eps}} z_{k_0}(x, \la) = \sum_{i = 1}^s (-1)^{\eps + k_0 - k_i} \be_{l,k_i,\eps} z_{l,k_i,\eps}(x) \tilde M^{\star}_{n - k_0 + 1, n - k_i + 1}(\la_{l,k_i,\eps}), \quad (l,k,\eps) \in V, \: k \ge k_0,
\end{equation}
where, for a fixed triple $(l,k,\eps) \in V$ and a fixed $k_0$, $\{ k_i \}_{i = 1}^s$ is the set of all the indices such that $\la_{l,k,\eps} = \la_{l,k_i,\eps}$ and $k_i \ge k_0$.
\end{lem}

\begin{proof}
Using \eqref{wron} and \eqref{defE}, we get
$$
\tilde E_{n-k+1, k_0}(x, \la_{l,k,\eps}, \la) = \frac{\langle \tilde \Phi^{\star}_{n-k+1}(x, \la_{l,k,\eps}), \tilde{\mathcal C}_{k_0}(x, \la) \rangle_{|x = 0}}{\la - \la_{l,k,\eps}} + \int_0^x \tilde \Phi^{\star}_{n-k+1}(t, \la_{l,k,\eps}) \tilde{\mathcal C}_{k_0}(t, \la) \, dt,
$$
where the integral, obviously, is entire in $\la$. Using \eqref{defLagr} and \eqref{initC}, we deduce
\begin{align*}
\langle \tilde \Phi^{\star}_{n-k+1}(x, \la_{l,k,\eps}), \tilde{\mathcal C}_{k_0}(x, \la) \rangle_{|x = 0} & = \sum_{j = 0}^{n-1} (-1)^j \tilde \Phi_{n-k+1}^{\star[j]}(0, \la_{l,k,\eps}) \tilde{\mathcal C}_{k_0}^{[n-j-1]}(0, \la) \\ & = (-1)^{n-k_0} \tilde \Phi_{n-k+1}^{\star[n-k_0]}(0, \la_{l,k,\eps}).
\end{align*}

The relation \eqref{defM*} in the element-wise form implies
$$
\tilde \Phi_r^{\star}(x, \la) = \tilde C_r^{\star}(x,\la) + \sum_{j = r+1}^n \tilde M_{j,r}^{\star}(\la)\tilde{\mathcal C}_j^{\star}(x, \la), \quad r = \overline{1,n}.
$$

Taking the initial conditions \eqref{initC} for $\tilde C_r^{\star}(x, \la)$ into account, we conclude that 
$$
\tilde \Phi_{n-k+1}^{\star[n - k_0]}(0, \la_{l,k,\eps}) = \tilde M_{n-k_0 + 1, n-k+1}^{\star}(\la_{l,k,\eps}).
$$
Note that this value equals zero for $k < k_0$. Consequently, the function $\tilde E_{n-k+1,k_0}(x, \la_{l,k,\eps}, \la)$ is analytic in $\la$ except for the simple pole $\la_{l,k,\eps}$ if $k \ge k_0$ and
\begin{equation} \label{resE}
\Res_{\la = \la_{l,k,\eps}} \tilde E_{n-k+1, k_0}(x, \la_{l,k,\eps}, \la) = (-1)^{n-k_0} \tilde M^{\star}_{n-k_0 + 1, n - k + 1}(\la_{l,k,\eps}).
\end{equation}

Combining \eqref{defz} and \eqref{resE}, we arrive at \eqref{resz}.
\end{proof}

Let us apply Lemma~\ref{lem:resz} to study analytic properties of $Z_k(x, \la)$.

\begin{lem} \label{lem:resZ}
For each fixed $k \in \{ 1, 2, \dots, n-1 \}$ and $x \in [0,1]$, the function $Z_k(x, \la)$ is analytic in $\la$ except for the simple poles $\{ \la_{l,k,0} \}_{l \ge 1}$. Moreover,
\begin{equation} \label{resZ}
\Res_{\la = \la_{l,k,0}} Z_k(x, \la) = \be_{l,k,0} z_{l,k,0}(x), \quad l \ge 1.
\end{equation}
The function $Z_n(x, \la)$ is entire in $\la$.
\end{lem}

\begin{proof}
It follows from Lemma~\ref{lem:estZ} that the series \eqref{defZ} converges absolutely and uniformly for $\rho$ on compact sets in $\overline{\Gamma}_{s,\rho^*, \de}$ and $\la = \rho^n$. Consequently, the functions $Z_{k_0}(x, \la)$, $k_0 = \overline{1,n}$, are analytic for such values of $\la$. Moreover, these functions can be analytically continued inside the circles that are cut out in $\overline{\Gamma}_{s,\rho^*, \de}$ with the possible exception of the values $\{ \la_{l,k,\eps} \}$. Therefore, it remains to compute the residues of $Z_{k_0}(x, \la)$ at these points.

Using the relation \eqref{defM} and Proposition~\ref{prop:M}, we obtain
$\tilde {\mathcal C}(x, \la) = \tilde \Phi(x, \la) (\tilde M(\la))^{-1}$ and so
\begin{equation} \label{relCPhi}
\tilde{\mathcal C}_{k_0}(x, \la) = \tilde \Phi_{k_0}(x, \la) + \sum_{j = k_0+1}^n (-1)^{j-k_0} \tilde M^{\star}_{n-k_0+1, n-j+1}(\la) \tilde \Phi_j(x, \la), \quad k_0 = \overline{1,n}.
\end{equation}

Substituting \eqref{relCPhi} into \eqref{defZ}, we derive
\begin{equation} \label{relZz}
Z_{k_0}(x, \la) = z_{k_0}(x, \la) - \sum_{j = {k_0}+1}^n (-1)^{j-{k_0}} \tilde M^{\star}_{n-{k_0}+1,n-j+1} (\la) Z_j(x, \la), \quad k_0 = \overline{1,n}.
\end{equation}

Let us prove the assertion of the lemma by induction for $k_0 = n, n-1, \dots, 2, 1$. For $k_0 = n$, the function $Z_n(x, \la) \equiv z_n(x, \la)$ is entire in $\la$ by virtue of Lemma~\ref{lem:resz}. Next, suppose that the assertion is already proved for $Z_{k_0+1}(x, \la)$, \dots, $Z_n(x, \la)$. Let us prove it for $Z_{k_0}(x, \la)$. Fix $(l,k,\eps) \in V$ and denote by $\{ k_i \}_{i = 1}^s$ the set of all the indices such that $\la_{l,k,\eps} = \la_{l,k_i,\eps}$, $k_i \ge k_0$, as in the statement of Lemma~\ref{lem:resz}. Consider the two cases.

\smallskip

\textit{Case 1:} $\eps = 0$. In view of \eqref{difla}, $\la_{l,k,0}$ is a regular point of $\tilde M^{\star}_{n-k_0+1,n-j+1}(\la)$. Using \eqref{resz}, \eqref{relZz}, and the induction hypothesis, we obtain
\begin{align*}
Z_{k_0, \langle -1 \rangle}(x, \la_{l,k,0}) = & \sum_{i = 1}^s (-1)^{ k_0 - k_i} \be_{l,k_i,0} z_{l,k_i,0}(x) \tilde M^{\star}_{n - k_0 + 1, n - k_i + 1}(\la_{l,k_i,0}) \\ & - \sum_{\substack{i = 1 \\ k_i \ne k_0}}^s (-1)^{k_i - k_0} \tilde M^{\star}_{n-k_0+1,n-k_i+1}(\la_{l,k_i,0}) Z_{k_i, \langle -1 \rangle}(x, \la_{l,k_i,0}).
\end{align*}

Thus, we obtain the formula \eqref{resZ} for $\la_{l,k,0} = \la_{l,k_0,0}$ and $Z_{k_0,\langle -1 \rangle}(x, \la_{l,k,0}) = 0$ otherwise.

\smallskip

\textit{Case 2:} $\eps = 1$. By the induction hypothesis, the functions $\{ Z_j(x, \la) \}_{j = k_0 + 1}^n$ are analytic at $\la_{l,k,1}$. The functions $\tilde M^{\star}_{n-k_0+1,n-j+1}(\la)$ have a pole $\la_{l,k,1}$ if $j = k_i + 1$, $i = \overline{1,s}$. Therefore, using \eqref{relZz} and \eqref{resz}, we obtain
\begin{align} \nonumber
Z_{k_0,\langle -1 \rangle}(x, \la_{l,k,1}) = & \sum_{i = 1}^s (-1)^{ k_0 - k_i + 1} \be_{l,k_i,1} z_{l,k_i,1}(x) \tilde M^{\star}_{n - k_0 + 1, n - k_i + 1}(\la_{l,k_i,1}) \\  \label{smZk} & - \sum_{i = 1}^s (-1)^{k_i - k_0 + 1} \tilde M^{\star}_{n-k_0 + 1, n - k_i, \langle -1 \rangle}(\la_{l,k_i,1}) Z_{k_i + 1}(x, \la_{l,k_i,1}).
\end{align}
By Lemma~\ref{lem:simps} 
\begin{equation} \label{tsimps}
\la_{l,k_i,1} = \tilde \la_{l,n-k_i}^{\star}, \quad \be_{l,k_i,1} = \tilde \be_{l,n-k_i}^{\star}.
\end{equation}
Substituting \eqref{relZzk} and \eqref{tsimps} into \eqref{smZk}, we arrive at the relation
\begin{align} \nonumber
Z_{k_0,\langle -1 \rangle}(x,\la_{l,k,1}) = & \sum_{i = 1}^s (-1)^{k_0 - k_i + 1} z_{l,k_i,1}(x) \\ \label{sm2} & \times \bigl( \tilde \be_{l,n-k_i}^{\star} \tilde M^{\star}_{n-k_i + 1, n-k_i, \langle 0 \rangle}(\tilde \la^{\star}_{l,n-k_i}) - \tilde M^{\star}_{n-k_0 + 1, n-k_i}(\tilde \la_{l,n-k_i}^{\star})\bigr).
\end{align}
It follows from \eqref{defN} that
\begin{equation} \label{relNM}
\tilde M_{\langle 0 \rangle}^{\star}(\tilde \la_{l,n-k_i}^{\star}) \tilde {\mathcal N}^{\star}(\tilde \la_{l,n-k_i}^{\star}) = \tilde M_{\langle -1 \rangle}^{\star}(\tilde \la_{l,n-k}^{\star}).
\end{equation}
By virtue of Lemma~\ref{lem:simps}, $\tilde F^{\star} \in \mathfrak F_{n, simp}$, so the matrices $\tilde {\mathcal N}^{\star}(\tilde \la_{l,n-k_i}^{\star})$ have the special structure \eqref{structN}. Therefore, the relation \eqref{relNM} implies that the expression in the brackets in \eqref{sm2} vanishes.
Hence, $\la_{l,k,1}$ is a regular point of $Z_{k_0}(x, \la)$. By induction, this concludes the proof.
\end{proof}

We need to show that $z_{l,k,\eps}(x) = 0$ for all $(l,k,\eps) \in V$. Fir this purpose, we will use the following two lemmas.

\begin{lem} \label{lem:Zup}
If $z_{l,n-p,0}(x) = 0$ for all $l \ge 1$, then $z_{l,k,\eps}(x) = 0$ for $(l,k,\eps) \in V$, $k = \overline{n-p, n-1}$.
\end{lem}

\begin{proof}
Suppose that $z_{l,k,0}(x) = 0$ for some $k \ge n-p$ and all $l \ge 1$. In view of \eqref{relZz} and Lemma~\ref{lem:resZ}, the function $Z_{k + 1}(x, \la)$ has the zeros $\{ \la_{l,k,0} \}_{l \ge 1}$ and the poles $\{ \la_{l,k+1,0} \}_{l \ge 1}$. 
Denote
$$
d_j(\la) := \prod_{l = 1}^{\infty} \left( 1 - \frac{\la}{\la_{l,j,0}}\right), \quad j = \overline{1,n-1}, \quad
d_n(\la) := 1.
$$
(For simplicity, we assume that $\la_{l,j,0} \ne 0$. The opposite case requires minor changes). Thus, the function
$$
\mathcal G_{k+1}(x, \la) := Z_{k+1}(x, \la) \frac{d_{k+1}(\la)}{d_k(\la)}
$$
is entire in $\la$.

Since $\{ \la_{l,j,0} \}$ satisfy the asymptotics \eqref{asymptla}, then the asymptotic properties of the function $d_j(\la)$ are analogous to $\Delta_{j,j}(\la)$, which is the characteristic function of the boundary value problem $\mathcal L_j$ with the boundary conditions \eqref{bc}. Consequently, using the methods of \cite{Bond22-asympt}, one can show that 
\begin{equation} \label{asymptd}
d_j(\rho^n) \asymp \rho^{s_j - \frac{n(n-1)}{2}} \exp(\rho (\om_{j+1} + \om_{j+2} + \dots + \om_n)), \quad \rho \in \overline{\Gamma}_{s, \rho^*, \de},
\end{equation}
where the notation $f(\rho) \asymp g(\rho)$ means
$$
    C_1 |g(\rho)| \le |f(\rho)| \le C_2 |g(\rho)|, \quad C_1, C_2 > 0,
$$
and $s_j$ is the sum of all the orders in the boundary conditions \eqref{bc}:
$$
s_j := \frac{j(j-1)}{2} + \frac{(n-j)(n-j-1)}{2}.
$$

Using the estimates \eqref{estZ2} and \eqref{asymptd}, we get
$$
|\mathcal G_{k+1}(x, \la)| \le C |\rho|^{-(n-k)} |\exp(\rho \om_{k+1}(x-1))|, \quad \la = \rho^n, \, \rho \in \overline{\Gamma}_s, \, |\rho| \ge \rho^*.
$$
Since $k \ge n-p$, then $\exp(\rho \om_{k+1}(x-1))$ is bounded as $|\rho| \to \infty$. Hence $\mathcal G_{k+1}(x, \la) \to 0$ as $|\la| \to \infty$. By Liouville's Theorem, $\mathcal G_{k+1}(x, \la) \equiv 0$ and so $Z_{k+1}(x, \la) \equiv 0$. Consequently, it follows from \eqref{resZ}, \eqref{relZz}, and the assumption $\be_{l,k+1}\ne 0$ that
\begin{align*}
z_{l,k,1}(x) & = Z_{k+1}(x, \la_{l,k,1}) = 0, \quad k < n, \\
z_{l,k+1,0}(x) & = \frac{1}{\be_{l,k+1,0}} Z_{k+1, \langle -1 \rangle}(x, \la_{l,k+1,0}) = 0, \quad k < n-1.
\end{align*}

By induction, this implies the assertion of the lemma.
\end{proof}

\begin{lem} \label{lem:Zdown}
If $z_{l,p,0}(x) = 0$ for all $l \ge 1$, then $z_{l,k,\eps}(x) = 0$ for $(l,k,\eps) \in V$, $k = \overline{1,p-1}$.
\end{lem}

\begin{proof}
Suppose that $z_{l,k,0}(x) = 0$ for some $k \in \{ 2, 3,\dots, p\}$ and all $l \ge 1$. Then, it follows from Lemma~\ref{lem:resZ} that $Z_k(x, \la)$ is entire. On the other hand, the estimate \eqref{estZ2} implies that $Z_k(x, \la)$ is bounded in the whole $\la$-plane. Hence $Z_k(x, \la) \equiv 0$, so the relation \eqref{relZz} implies $z_{l,k-1,\eps}(x) = 0$, $l\ge 1$, $\eps \in \{ 0, 1 \}$. Induction yields the assertion of the lemma.
\end{proof}

Introduce the auxiliary functions
\begin{equation} \label{defB}
B_j(x, \la) := Z_j(x, \la) \overline{Z_{n-j+1}(x, (-1)^n\overline{\la})}, \quad j = \overline{1,n}.
\end{equation}

\begin{lem} \label{lem:intB}
There exists a sequence of circles $\left\{ \la \in \mathbb C \colon |\la| = \Theta_v \right\}$ with radii $\Theta_v \to \infty$ such that 
\begin{equation} \label{intB}
\lim_{v \to \infty} \oint\limits_{|\la| = \Theta_v} B_j(x, \la) \, d\la = 0, \quad j = \overline{1,n}.
\end{equation}
\end{lem}

\begin{proof}
Fix $j \in \{ 1, 2, \dots, n \}$. The estimate \eqref{estZ} implies
\begin{equation} \label{estZZ}
|Z_j(x, \rho^n) \overline{Z_{n-j+1}(x, (-1)^n\overline{\rho^n})}| \le C |\rho|^{-(n-1)} \left( \sum_{l,k} \frac{\xi_l}{|\rho - c_k l| + 1}\right)^2, \quad \rho \in \overline{\Gamma}_{s, \rho^*, \de}.
\end{equation}

Choose such radii $\theta_r \to \infty$ that
$$
\{ \rho \in \mathbb C \colon |\rho| = \theta_r \} \subset \overline{\Gamma}_{s,\rho^*,\de}, \quad \theta_{r + 1} - \theta_r > 1, \quad r \ge 1,
$$
for $s = 1, 2$, $\rho^*$, and $\de$, for which the estimate \eqref{estZZ} holds. Denote by $n_{r,k}$ the closest integer to $\frac{\theta_r}{|c_k|}$. Then
$$
|B_j(x, \rho^n)| \le C |\rho|^{-(n-1)} \left( \sum_{l,k} \frac{\xi_l}{|n_{r,k} - l| + 1}\right)^2, \quad |\rho| = \theta_r.
$$

For simplicity, suppose that $\{ \xi_l \} \in l_1$. Then
\begin{align*}
|B_j(x, \rho^n)| & \le C |\rho|^{-(n-1)} \sum_l (\sqrt{\xi_l})^2 \sum_{l,k} \frac{(\sqrt{\xi_l})^2}{(|n_{r,k} - l| + 1)^2} \\
& \le C |\rho|^{-(n-1)} \sum_{k = 1}^{n-1} g_{r,k}, \quad |\rho| = \theta_r,
\end{align*}
where
$$
g_{r,k} := \sum_{l = 1}^{\infty} \frac{\xi_l}{(|n_{r,k} - l| + 1)^2}.
$$

Clearly,
$$
\sum_{r = 1}^{\infty} g_{r,k} \le \sum_{l= 1}^{\infty} \xi_l \sum_{r = 1}^{\infty} \frac{1}{(|n_r- l|+ 1)^2} \le
C \sum_{u = 1}^{\infty} \frac{1}{u^2} < \infty, \quad k = \overline{1,n-1}.
$$

Hence, for each fixed $k \in \{ 1, 2, \dots, n-1 \}$, we have $\{ g_{r,k} \}_{r \ge 1} \in l_1$. Therefore, one can choose a subsequence $\{ r_v \}_{v \ge 1}$ such that $g_{r_v, k} = o(r_v^{-1})$ as $v \to \infty$. This implies $g_{r_v,k} = o(\theta_{r_v}^{-1})$, $v \to \infty$. Put $\Theta_v := \theta_{r_v}^n$. Then $B_j(x, \la) = o(|\la|^{-1})$ for $|\la| = \Theta_v$, $v \to \infty$. This yields the claim for the case $\{ \xi_l \} \in l_1$.

If $\{ \xi_l \} \not\in l_1$, then one can apply the technique of \cite{Bond21-tamkang} to show that 
$$
\{ z_{l,k,\eps}(x) w_{l,k}^{-1}(x) \} \in l_2, \quad 
\{ (z_{l,k,0}(x) - z_{l,k,1}(x)) w_{l,k}^{-1}(x) \} \in l_1.
$$

Using these relations, one can derive the estimate
$$
|B_j(x, \rho^n)| \le C |\rho|^{-(n-1)} \left( \sum_{l,k} \frac{\kappa_l}{|n_{r,k} - l| + 1}\right)^2, \quad |\rho| = \theta_r,
$$
with some sequence $\{ \kappa_l \} \in l_1$. Then, the proof of the lemma can be completed analogously to the case $\{ \xi_l \} \in l_1$.
\end{proof}

\begin{lem} \label{lem:resB}
The following relation holds:
$$
\Res_{\la = \la_{l,j,0}} B_j(x, \la) = \Res_{\la = \la_{l,j,0}} B_{j+1}(x, \la) = \be_{l,j,0} z_{l,j,0}(x) \overline{z_{l,n-j,0}(x)}, \quad j = \overline{1,n-1}.
$$
At all the other points, the functions $B_j(x, \la)$ are analytical in $\la$ for each fixed $x \in [0,1]$.
\end{lem}

\begin{proof}
The assertion of the lemma immediately follows from \eqref{sasd}, \eqref{relZz}, \eqref{defB}, and Lemma~\ref{lem:resZ}.
\end{proof}

Proceed to the proof of Theorem~\ref{thm:inverse}. Consider two cases.

\smallskip

\textit{Case} $n = 2p$. Introduce the function
$$
B(x, \la) := \sum_{j = 1}^p (-1)^j B_j(x, \la).
$$

By Lemma~\ref{lem:intB},
\begin{equation} \label{intB2}
\lim_{v \to \infty} \oint\limits_{|\la| =\Theta_v} B(x, \la) \, d\la = 0.
\end{equation}

Lemma~\ref{lem:resB} implies that $B(x, \la)$ has the only poles $\{ \la_{l,p,0} \}_{l \ge 1}$ and
$$
\Res_{\la = \la_{l,p,0}} B(x, \la) = (-1)^p \be_{l,p,0} z_{l,p,0}(x) \overline{z_{l,p,0}(x)}.
$$
Therefore, calculating the integrals in \eqref{intB2} by the Residue Theorem, we obtain
$$
\sum_{l = 1}^{\infty} \be_{l,p,0} |z_{l,p,0}(x)|^2 = 0.
$$

By the hypothesis of Theorem~\ref{thm:inverse}, we have $(-1)^{p + 1}\be_{l,p,0} > 0$. This implies $z_{l,p,0}(x) = 0$ for all $l \ge 1$. Applying Lemmas~\ref{lem:Zup} and \ref{lem:Zdown}, we conclude that $z_{l,k,\eps}(x) = 0$ for all $(l,k,\eps) \in V$.

\smallskip

\textit{Case} $n = 2p + 1$. Calculating the integral in \eqref{intB} by the Residue Theorem and using Lemma~\ref{lem:resB}, we get by induction for $j = \overline{1,p}$ that
\begin{equation} \label{relsum}
\sum_{l = 1}^{\infty} \Res_{\la = \la_{l,j,0}} B_j(x,\la) = \sum_{l = 1}^{\infty} \be_{l,j,0} z_{l,j,0}(x) \overline{z_{l,n-j,0}(x)} = 0.
\end{equation}

Consider the radii $\Theta_v$, $v \ge 1$, from Lemma~\ref{lem:intB}. Denote by $\Upsilon_v$ the boundary of the half-circle 
$$
\left\{ \la \in \mathbb C \colon |\la| < \Theta_v, \, (-1)^{p+1} \mbox{Re}\, \la > 0 \right\}.
$$

By virtue of Lemma~\ref{lem:resB}, the function $B_{p+1}(x, \la)$ has the poles $\{ \la_{l,p,0} \}$ and $\{ \la_{l,p+1,0} \}$. By the hypotheses of Theorem~\ref{thm:inverse}, we have $\la_{l,p+1,0} = -\overline{\la_{l,p,0}}$ and $(-1)^{p+1}\mbox{Re} \la_{l,p,0} > 0$, so $(-1)^{p+1}\mbox{Re} \la_{l,p+1,0} < 0$. Therefore, using the Residue Theorem, Lemma~\ref{lem:resB}, and \eqref{relsum}, we obtain
\begin{align*}
\lim_{v \to \infty} \frac{1}{2 \pi i} \oint\limits_{\Upsilon_v} B_{p+1}(x, \la) \, d\la & = \lim_{v \to \infty} \sum_{|\la_{l,p,0}| < \Theta_v} \Res_{\la = \la_{l,p,0}} B_{p+1}(x, \la) \\ & = \sum_{l = 1}^{\infty} \be_{l,p,0} z_{l,p,0}(x) \overline{z_{l,p+1,0}(x)} = 0.
\end{align*}

On the other hand, $\Upsilon_v = [-i \Theta_v, i \Theta_v] \cup \Upsilon_v^+$, where $\Upsilon_v^+$ is the arc $\{ |\la| = \Theta_v, \, 0 \le (-1)^{p+1} \arg \la \le \pi \}$, and
$$
\lim_{v \to \infty} \frac{1}{2\pi i} \int\limits_{\Upsilon_v^+} B_{p+1}(x, \la) \, d\la = 0.
$$
Consequently,
$$
\int_{-i\infty}^{i\infty} B_{p+1}(x, \la) \, d\la = 0.
$$
Using \eqref{defB} and putting $\la = i\tau$, we arrive at the relation
$$
\int_{-\infty}^{\infty} |Z_{p+1}(x, i\tau)|^2 \, d\tau = 0,
$$
which implies $Z_{p+1}(x, \la) \equiv 0$. The relations \eqref{relZz} and \eqref{resZ} imply $z_{l,p,\eps}(x) = 0$, $\eps = 0,1$, and $z_{l,p+1,0}(x)= 0$, respectively, for all $l \ge 1$. Using Lemmas~\ref{lem:Zup} and~\ref{lem:Zdown}, we conclude that $z_{l,k,\eps}(x) = 0$ for all $(l,k,\eps) \in V$.

Thus, in the both cases $n = 2p$ and $n = 2p+1$, we obtain $\zeta(x) = 0$, which finishes the proof of Theorem~\ref{thm:inverse}.
\end{proof}

\section{Inverse spectral problem} \label{sec:ip}

In this section, we apply Theorem~\ref{thm:inverse} to obtain necessary and sufficient conditions for solvability of an inverse spectral problem for equation \eqref{eqv} with the coefficients $\tau_{\nu} \in W_2^{\nu-1}[0,1]$, $\nu = \overline{0,n-2}$. In other words,
\begin{itemize}
    \item $\tau_0$ belongs to the space of generalized functions $W_2^{-1}[0,1]$, whose antiderivatives belong to $L_2[0,1]$.
    \item $\tau_1$ belongs to $W_2^0[0,1] = L_2[0,1]$.
    \item For $k \ge 1$, $\tau_{k+1}$ belongs to the Sobolev space $W_2^k[0,1]$ of functions $f(x)$ such that $f^{(k)} \in L_2[0,1]$.
\end{itemize}

On the one hand, one can reduce equation \eqref{eqv} to the form \eqref{eqvp},
where
\begin{equation} \label{ptau}
p_s = \sum_{k = \lceil s/2\rceil}^{\min \{s, \lfloor n/2\rfloor - 1\}} C_k^{s-k} \left(\tau_{2k}^{(2k-s)} + \tau_{2k+1}^{(2k-s+1)}\right) + \sum_{k = \lceil (s-1)/2\rceil}^{\min \{ s,\lfloor (n-1)/2 \rfloor\}-1} 2 C_k^{s-k-1} \tau_{2k+1}^{(2k+1-s)},
\end{equation}
$C_k^j := \frac{k!}{j!(k-j)!}$ are the binomial coefficients, the notations $\lfloor a \rfloor$ and $\lceil a \rceil$ mean the rounding of $a$ down and up, respectively, and $\tau_{n-1}=0$. Clearly, $p_s \in W_2^{s-1}[0,1]$, $s = \overline{0,n-2}$.

On the other hand, equation \eqref{eqv} can be transformed to the first-order system \eqref{sys}. 
For this purpose, we apply the results of \cite{Bond23-mmas}. In \cite{Bond23-mmas}, associated matrices $F(x)$ were constructed for differential expression of the following general form with various singularity orders $\{ i_{\nu} \}_{\nu = 0}^{n-2}$:
\begin{align} \nonumber
y^{(n)} & + \sum_{k = 0}^{\lfloor n/2\rfloor} (-1)^{i_{2k} + k} \left( \sigma_{2k}^{(i_{2k})}(x) y^{(k)}\right)^{(k)} \\ \label{deflsi}
& + \sum_{k = 0}^{\lfloor (n-1)/2 \rfloor-1} (-1)^{i_{2k+1} 
 + k + 1}\left( \bigl( \sigma_{2k+1}^{(i_{2k+1})}(x) y^{(k)}\bigr)^{(k+1)} + \bigl(\sigma_{2k+1}^{(i_{2k+1})}(x) y^{(k+1)}\bigr)^{(k)}\right), 
\end{align}
where $\{ \sigma_{\nu}(x) \}_{\nu = 0}^{n-2}$ are regular functions on $(0,1)$. For the differential expression $\ell_n(y)$ in \eqref{eqv} with $\tau_{\nu} \in W_2^{\nu-1}[0,1]$, one can put $i_0 := 1$, $\sigma_0 := -\tau_0^{(-1)}$, $i_{\nu} := 0$ and $\sigma_{\nu} := (-1)^{\lfloor \nu/2 \rfloor +\nu} \tau_{\nu}$ for $\nu \ge 1$. 
Then, according to the results of \cite[Section~2]{Bond23-mmas}, the associated matrix $F(x)$ can be obtained as follows.
Define the matrix function $Q(x) =[q_{k,j}(x)]_{k,j = 0}^p$, $p= \lfloor n/2 \rfloor$, by the relations
\begin{gather*}
q_{0,1} := \sigma_0 + \sigma_1, \quad q_{1,0} := \sigma_0- \sigma_1, \quad q_{k,k} := \sigma_{2k}, \quad k = \overline{1, p-1}, \\ q_{k,k+1} := \sigma_{2k+1}, \quad q_{k+1,k} := -\sigma_{2k+1}, \quad k = \overline{1,n-p-2}.
\end{gather*}
For $n \ge 3$, construct $F(x) = [f_{k,j}(x)]_{k,j = 1}^n$ by the formulas
\begin{equation} \label{fkj}
f_{k,j} := (-1)^{k+n+1} q_{j-1,n-k}, \quad k = \overline{p+1,n}, \: j = \overline{1,n-p}, \quad
f_{k,k+1} := 1, \quad k =\overline{1,n-1}.
\end{equation}
All the other entries of $Q(x)$ and $F(x)$ are assumed to be zero.
For example,
\begin{align*}
n = 4 \colon & \quad 
Q(x) = \begin{bmatrix}
            0 & \sigma_0 + \sigma_1 & 0 \\
            \sigma_0 - \sigma_1 & \sigma_2 & 0 \\
            0 & 0 & 0 
      \end{bmatrix},
\quad
F(x) = \begin{bmatrix}
            0 & 1 & 0 & 0 \\
            0 & 0 & 1 & 0 \\
            -(\sigma_0+\sigma_1) & -\sigma_2 & 0 & 1 \\
            0 & \sigma_0 - \sigma_1 & 0 & 0
       \end{bmatrix}, \\
n = 5 \colon & \quad
Q(x) = \begin{bmatrix}
            0 & \sigma_0 + \sigma_1 & 0 \\
            \sigma_0 - \sigma_1 & \sigma_2 & \sigma_3 \\
            0 & -\sigma_3 & 0 
      \end{bmatrix},
\quad
F(x) = \begin{bmatrix}
        0 & 1 & 0 & 0 & 0 \\
        0 & 0 & 1 & 0 & 0 \\
        0 & -\sigma_3 & 0 & 1 & 0 \\
        \sigma_0 +\sigma_1 & \sigma_2 & -\sigma_3 & 0 & 1 \\
        0 & -(\sigma_0 - \sigma_1) & 0 & 0 & 0
\end{bmatrix}.
\end{align*}
The construction for $n = 2$ is different, see \eqref{F2}. 

Obviously, $\sigma_{\nu} \in L_2[0,1]$ for $\nu \ge 0$, so $F \in \mathfrak F_n$. Define the quasi-derivatives $y^{[k]}$ and the domain $\mathcal D_F$ by using \eqref{quasi} and \eqref{defDF}, respectively. Then, by Theorem~2.2 in \cite{Bond23-mmas}, for any $y \in \mathcal D_F$, the relation $\ell_n(y) = y^{[n]}$ holds, which implies \textit{the regularization} of the differential expression $\ell_n(y)$. Thus, equation \eqref{eqv} can be represented as \eqref{eqvn} or as the first-order system \eqref{sys}. Note that there are different ways to choose an associated matrix $F(x)$ for regularization of the differential expression $\ell_n(y)$. In particular, one can use the regularization of Mirzoev and Shkalikov \cite{MS16, MS19} or choose other singularity orders $i_0 \ge 1$, $i_{\nu} \ge 0$, $\nu \ge 1$ to represent $\ell_n(y)$ in the form \eqref{deflsi}. Anyway, the choice of an associated matrix does not influence on the spectral data $\{ \la_{l,k}, \mathcal N(\la_{l,k}) \}_{l \ge 1, \,k = \overline{1,n-1}}$ (see \cite{Bond23-reg}). For definiteness, we use the associated matrix constructed by formulas \eqref{fkj}.

Consider the boundary value problems $\mathcal L_k$, $k = \overline{1,n-1}$, for equation \eqref{eqv} with the boundary conditions \eqref{bc}. 
We will write that $\{ \tau_{\nu} \}_{\nu = 0}^{n-2} \in W_{simp}$ if $\tau_{\nu} \in W_2^{\nu-1}[0,1]$, $\nu = \overline{0,n-2}$, and the corresponding eigenvalues $\{ \la_{l,k} \}_{l \ge 1, \, k = \overline{1,n-1}}$ satisfy (A-1) and (A-2). Consider the following inverse spectral problem.

\begin{ip} \label{ip:tau}
Given the spectral data $\{ \la_{l,k}, \be_{l,k} \}_{l \ge 1, \, k = \overline{1,n-1}}$, find the coefficients $\{ \tau_{\nu} \}_{\nu = 0}^{n-2} \in W_{simp}$.
\end{ip}

It has been proved in \cite{Bond22-alg, Bond23-loc} that, under the assumptions (A-1) and (A-2), the spectral data $\{ \la_{l,k}, \be_{l,k} \}_{l \ge 1, \, k= \overline{1,n-1}}$ for equation \eqref{eqvp} uniquely specify the coefficients $p_s \in W_2^{s-1}[0,1]$, $s = \overline{0,n-2}$. Since the relation \eqref{ptau} implies a bijection between $\{ p_s \}_{s = 0}^{n-2}$ and $\{ \tau_{\nu} \}_{\nu = 0}^{n-2}$ in the corresponding functional spaces, we immediately arrive at the following uniqueness proposition for Inverse Problem~\ref{ip:tau}.

\begin{prop} \label{prop:uniq}
The spectral data $\{ \la_{l,k}, \be_{l,k} \}_{l \ge 1, \, k = \overline{1,n-1}}$ uniquely determine the coefficients $\tau_{\nu} \in W_{simp}$.
\end{prop}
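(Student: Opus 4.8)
The plan is to reduce the assertion to the already-established uniqueness of the coefficients $\{ p_s \}_{s=0}^{n-2}$ of the reduced equation \eqref{eqvp} and then to transfer that uniqueness to $\{ \tau_\nu \}_{\nu=0}^{n-2}$ through the change of variables \eqref{ptau}. First I would recall from \cite{Bond22-alg, Bond23-loc} that, under the assumptions (A-1) and (A-2), the spectral data $\{ \la_{l,k}, \be_{l,k} \}_{l \ge 1, \, k= \overline{1,n-1}}$ determine the coefficients $p_s \in W_2^{s-1}[0,1]$, $s = \overline{0,n-2}$, uniquely. Hence it suffices to prove that the map $\{ \tau_\nu \} \mapsto \{ p_s \}$ given by \eqref{ptau} is a bijection between $\prod_{\nu = 0}^{n-2} W_2^{\nu-1}[0,1]$ and $\prod_{s=0}^{n-2} W_2^{s-1}[0,1]$; the composition of its inverse with the spectral-data-to-$\{ p_s \}$ correspondence then recovers $\{ \tau_\nu \}$ uniquely.

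The core of the argument is the triangular structure of \eqref{ptau}. For each $s$, the right-hand side involves only coefficients $\tau_\nu$ with $\nu \ge s$, and the single term in which $\tau_s$ enters with no derivative carries a nonzero constant. Indeed, if $s = 2k$ is even this ``diagonal'' term comes from the first sum with factor $C_k^{s-k} = C_k^k = 1$, while if $s = 2k+1$ is odd it comes from the second sum with factor $2 C_k^{s-k-1} = 2 C_k^k = 2$; in the odd case the would-be contribution of $\tau_s$ to the first sum drops out because the relevant binomial coefficient has lower index exceeding upper index. Since all other terms involve $\tau_\nu$ with $\nu > s$, the system \eqref{ptau} can be solved recursively for $s = n-2, n-3, \dots, 0$: $p_{n-2}$ yields $\tau_{n-2}$, and, having determined $\tau_{n-2}, \dots, \tau_{s+1}$, one obtains $\tau_s$ from $p_s$ by subtracting the now-known higher-index contributions and dividing by the diagonal constant. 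This recursion is precisely the inverse map and establishes injectivity and surjectivity simultaneously.

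It remains to confirm that the inversion respects the Sobolev scale, which is the only point requiring genuine care. The key observation is that in \eqref{ptau} the derivative order applied to $\tau_\nu$ within $p_s$ is in every case exactly $\nu - s$, so that $\tau_\nu^{(\nu-s)} \in W_2^{(\nu-1)-(\nu-s)}[0,1] = W_2^{s-1}[0,1]$ whenever $\tau_\nu \in W_2^{\nu-1}[0,1]$. Consequently each subtracted term on the right-hand side of the recursion lies in $W_2^{s-1}[0,1]$, matching the regularity $p_s \in W_2^{s-1}[0,1]$, and therefore the recovered $\tau_s$ again belongs to $W_2^{s-1}[0,1]$. This degree-by-degree bookkeeping is routine once the derivative-order count is made explicit; the forward inclusion $p_s \in W_2^{s-1}[0,1]$ follows from the same count. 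Combining the bijectivity of \eqref{ptau} in these spaces with the uniqueness of $\{ p_s \}$ from the spectral data yields the proposition.
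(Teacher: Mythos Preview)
Your proposal is correct and follows exactly the route the paper takes: the paper deduces the proposition in one sentence, citing \cite{Bond22-alg, Bond23-loc} for the uniqueness of $\{p_s\}$ and asserting that \eqref{ptau} gives a bijection between $\{\tau_\nu\}$ and $\{p_s\}$ in the corresponding functional spaces. You have simply supplied the details of that bijection (the triangular structure and the derivative-order count), which the paper leaves implicit.
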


Moreover, Theorem~2 in \cite{Bond23-loc} implies the following sufficient conditions for the existence of solution for Inverse Problem~\ref{ip:tau}.

\begin{prop} \label{prop:suff}
Let complex numbers $\{ \la_{l,k}, \be_{l,k} \}_{l \ge 1, \, k = \overline{1,n-1}}$ satisfy (A-1), (A-2), and $\be_{l,k} \ne 0$ for all $l,k$. Suppose that there exists a model problem with  coefficients $\{ \tilde \tau_{\nu} \}_{\nu = 0}^{n-2} \in W_{simp}$ such that:
\begin{enumerate}
\item $\{ l^{n-2} \xi_l \}_{l \ge 1} \in l_2$, where the numbers $\xi_l$ were defined in \eqref{defxi}. 
\item The operator $(I - \tilde R(x))$, which is constructed by using $\{ \la_{l,k}, \be_{l,k} \}_{l \ge 1, \, k= \overline{1,n-1}}$ and the model problem according to Section~\ref{sec:main}, has a bounded inverse operator for each fixed $x \in [0,1]$. 
\end{enumerate}

Then, there exists a unique solution $\{ \tau_{\nu} \}_{\nu = 0}^{n-2} \in W_{simp}$ of Inverse Problem~\ref{ip:tau} for the data $\{ \la_{l,k}, \be_{l,k} \}_{l \ge 1, \, k = \overline{1,n-1}}$.
\end{prop}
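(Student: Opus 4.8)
The plan is to deduce the statement directly from Theorem~2 of \cite{Bond23-loc}, whose hypotheses are engineered to coincide with conditions (1) and (2) above once the right identifications are made. First I would observe that condition (2) supplies exactly the invertibility needed to solve the main equation: since $(I - \tilde R(x))$ is boundedly invertible for every fixed $x \in [0,1]$, the main equation \eqref{main} has a unique solution $\psi(x) \in m$, and, applying the inverse of the transform \eqref{defpsi}, one recovers the functions $\{ \vv_{l,k,\eps}(x) \}_{(l,k,\eps) \in V}$. These serve as surrogates for the Weyl solutions of the (as yet unknown) problem, and the relation \eqref{recPhik} then yields candidate Weyl solutions $\Phi_{k_0}(x, \la)$ built from the given data and the model problem.

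Next I would invoke the reconstruction procedure of \cite{Bond23-loc}: from $\psi(x)$ one assembles a candidate associated matrix $F(x)$, or equivalently the coefficients $\{ \tau_{\nu} \}_{\nu = 0}^{n-2}$, as explicit series in the recovered quantities. The role of condition (1), namely $\{ l^{n-2} \xi_l \}_{l \ge 1} \in l_2$ with $\xi_l$ from \eqref{defxi}, is to guarantee that these series converge in the required functional spaces $W_2^{\nu - 1}[0,1]$, $\nu = \overline{0,n-2}$, including the distributional space $W_2^{-1}[0,1]$ for $\tau_0$. The weight $l^{n-2}$ encodes precisely the decay balance matching the Sobolev scale of the target coefficients, so that the output satisfies $\{ \tau_{\nu} \}_{\nu = 0}^{n-2} \in W_{simp}$.

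The crux, and the step I expect to be the main obstacle, is verifying that the coefficients produced in this way actually have the prescribed numbers $\{ \la_{l,k}, \be_{l,k} \}_{l \ge 1, \, k = \overline{1,n-1}}$ as their genuine spectral data. This is a sufficiency (consistency) statement: one must check that the series reconstruction inverts the spectral map, i.e. that computing the eigenvalues and weight numbers of the reconstructed problems $\mathcal L_k$ returns exactly the given data, and that assumptions (A-1), (A-2), and $\be_{l,k} \ne 0$ are preserved. This verification is precisely the content of Theorem~2 in \cite{Bond23-loc}, so I would cite it rather than reprove it; conditions (1) and (2) are exactly its hypotheses after the identifications above. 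Finally, uniqueness is immediate from Proposition~\ref{prop:uniq}, which asserts that the spectral data determine $\{ \tau_{\nu} \}_{\nu = 0}^{n-2} \in W_{simp}$ uniquely, so the reconstructed coefficients are the only ones in $W_{simp}$ realizing the given data. This completes the argument for Inverse Problem~\ref{ip:tau}.
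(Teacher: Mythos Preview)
Your proposal is correct and matches the paper's approach exactly: the paper does not give an independent proof of this proposition but simply states that it is implied by Theorem~2 in \cite{Bond23-loc}, and your write-up is a fleshed-out version of precisely that citation. The additional commentary you provide on how conditions (1) and (2) feed into the reconstruction (solvability of the main equation, convergence of the reconstruction series, consistency of the recovered spectral data) and the appeal to Proposition~\ref{prop:uniq} for uniqueness is accurate and in line with the intended argument.
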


A disadvantage of Proposition~\ref{prop:suff} is the requirement of the existence for the bounded operator $(I - \tilde R(x))^{-1}$. In general, it is difficult to verify this condition. However, in the self-adjoint case, we can apply Theorem~\ref{thm:inverse} for this purpose.

Denote by $W_{simp}^+$ the class of coefficients $\{\tau_{\nu} \}_{\nu = 0}^{n-2} \in W_{simp}$ such that the functions $i^{n+\nu} \tau_{\nu}$ are real-valued for $\nu = \overline{0,n-2}$.  Then, the associated matrix $F(x)$, which is constructed by formulas \eqref{fkj}, belongs to $\mathfrak F_{n,simp}^+$. Therefore, combining Theorem~\ref{thm:inverse} and Proposition~\ref{prop:suff}, we arrive at the following result.

\begin{thm} \label{thm:nsc}
Let complex numbers $\{ \la_{l,k}, \be_{l,k} \}_{l \ge 1, \, k = \overline{1,n-1}}$ satisfy (A-1), (A-2), \eqref{sasd}, \eqref{addhyp}, and $\be_{l,k} \ne 0$ for all $l, k$. Suppose that there exists a model problem with coefficients $\{\tilde \tau_{\nu}\}_{\nu = 0}^{n-2} \in W_{simp}^+$ such that $\{ l^{n-2} \xi_l \}_{l \ge 1} \in l_2$. Then, there exists a unique solution $\{ \tau_{\nu} \}_{\nu = 0}^{n-2}$ of Inverse Problem~\ref{ip:tau} for the data $\{ \la_{l,k}, \be_{l,k} \}_{l\ge 1, \, k = \overline{1,n-1}}$. Moreover, $\{ \tau_{\nu} \}_{\nu = 0}^{n-2} \in W_{simp}^+$.
\end{thm}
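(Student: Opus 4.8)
The plan is to derive the existence-and-uniqueness assertion by feeding the hypotheses into Theorem~\ref{thm:inverse} and then into Proposition~\ref{prop:suff}, and to obtain the self-adjointness assertion (the ``Moreover'' clause) from a uniqueness argument. First I would record that the model coefficients $\{\tilde\tau_\nu\}_{\nu=0}^{n-2}\in W_{simp}^+$ produce, via \eqref{fkj}, an associated matrix $\tilde F\in\mathfrak F_{n,simp}^+$ whose spectral data $\{\tilde\la_{l,k},\tilde\be_{l,k}\}$ satisfy the asymptotics \eqref{asymptla} and \eqref{asymptbe}. Although Theorem~\ref{thm:nsc} does not list these asymptotics among its hypotheses, they must first be established for the given numbers $\{\la_{l,k},\be_{l,k}\}$: from $\{l^{n-2}\xi_l\}_{l\ge1}\in l_2$ and $n\ge2$ one gets $\{\xi_l\}\in l_2$, and \eqref{defxi} then bounds $|\la_{l,k}-\tilde\la_{l,k}|$ and $|\be_{l,k}-\tilde\be_{l,k}|$ by $l_2$-summable remainders of the correct order, so $\{\la_{l,k},\be_{l,k}\}$ inherit \eqref{asymptla} and \eqref{asymptbe} with new $l_2$-remainders.

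Next I would verify that every hypothesis of Theorem~\ref{thm:inverse} now holds: $\tilde F\in\mathfrak F_{n,simp}^+$ has been arranged, while (A-1), (A-2), \eqref{sasd}, \eqref{addhyp} and $\be_{l,k}\ne0$ are assumed outright and the asymptotics were just obtained. Theorem~\ref{thm:inverse} therefore guarantees that $(I-\tilde R(x))$ is boundedly invertible for every fixed $x\in[0,1]$. Combined with the remaining hypothesis of Proposition~\ref{prop:suff}, namely $\{l^{n-2}\xi_l\}\in l_2$, and with the already verified (A-1), (A-2), $\be_{l,k}\ne0$, this is exactly what Proposition~\ref{prop:suff} requires, so there exists a unique $\{\tau_\nu\}_{\nu=0}^{n-2}\in W_{simp}$ solving Inverse Problem~\ref{ip:tau} for the data $\{\la_{l,k},\be_{l,k}\}$.

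It remains to upgrade $W_{simp}$ to $W_{simp}^+$. Let $F$ be the associated matrix of the recovered coefficients; by construction its spectral data are precisely $\{\la_{l,k},\be_{l,k}\}$. I would introduce the matrix $F^+(x):=\widehat{F^\star}(x)$ with entries $f^+_{k,j}=(-1)^{k+j+1}\overline{f_{n-j+1,n-k+1}}$, so that $F\in\mathfrak F_{n,simp}^+$ is equivalent to the identity $F=F^+$ of \eqref{condf}. The strategy is to show that $F^+$ carries the same spectral data as $F$ and then invoke the uniqueness Proposition~\ref{prop:uniq}. For this I would track the spectral data through the two operations composing $F^+$: the reflection $\star$, whose effect $\la^\star_{l,k}=\la_{l,n-k}$, $\be^\star_{l,k}=\be_{l,n-k}$ is given by Lemma~\ref{lem:simps}; and complex conjugation of the entries, under which the quasi-derivatives and the equation transform with $\la\mapsto\overline\la$ (the initial and boundary conditions in \eqref{initC} and \eqref{bcPhi} being real), so that the corresponding Weyl--Yurko matrix satisfies $\widehat M_{j,k}(\la)=\overline{M_{j,k}(\overline\la)}$ and the spectral data get conjugated. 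Composing these two actions and inserting the self-adjointness relations \eqref{sasd} returns exactly $\{\la_{l,k},\be_{l,k}\}$. Since $F^+$ is again an associated matrix corresponding to coefficients in $W_{simp}$, Proposition~\ref{prop:uniq} then forces $F^+=F$, i.e. \eqref{condf} holds and $\{\tau_\nu\}\in W_{simp}^+$.

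I expect this last paragraph to be the main obstacle. The delicate point is the exact bookkeeping of the sign factors $(-1)^n$ which appear because the matrix $F^\star$ is governed by equation \eqref{eqvz} with the coefficient $(-1)^n$ rather than by \eqref{eqvn}; consequently the conjugation rule applied to $F^\star$ carries an extra $(-1)^n$ (compare $\mathcal C_k^\star(x,\la)=\overline{\mathcal C_k(x,(-1)^n\overline\la)}$ in Section~\ref{sec:sa}) that must dovetail precisely with the $(-1)^n$ present in \eqref{sasd}. One must also check that the reflection-plus-conjugation of an associated matrix of the form \eqref{fkj} is again such a matrix with $W_{simp}$ coefficients, so that the uniqueness theorem genuinely applies to $F^+$; this reduces to the routine observation that $\star$ and conjugation preserve the $L_2$ and Sobolev regularity of the entries.
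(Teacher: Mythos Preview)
Your approach is essentially the paper's own: the paper simply says ``combining Theorem~\ref{thm:inverse} and Proposition~\ref{prop:suff}, we arrive at the following result'' and states Theorem~\ref{thm:nsc} without further argument. Your first two paragraphs reproduce exactly this combination, together with the necessary observation (which the paper also makes later, in a remark) that $\{l^{n-2}\xi_l\}\in l_2$ transfers the asymptotics \eqref{asymptla}--\eqref{asymptbe} from the model data to the given data.

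Where you go beyond the paper is in the ``Moreover'' clause: the paper asserts $\{\tau_\nu\}\in W_{simp}^+$ but offers no justification, whereas you supply one via the symmetry $F\mapsto F^+=\overline{F^\star}$ and Proposition~\ref{prop:uniq}. Your argument is sound in outline, and your caveat about the $(-1)^n$ bookkeeping is apt (the relevant identity is exactly the one displayed at the start of Section~\ref{sec:sa}). The one point to be careful about is the sentence ``Proposition~\ref{prop:uniq} then forces $F^+=F$'': that proposition gives uniqueness of the coefficients $\{\tau_\nu\}$, not of an arbitrary $F\in\mathfrak F_{n,simp}$, so you must first check that $F^+$ is literally the associated matrix built by \eqref{fkj} from some $\{\tau_\nu^+\}\in W_{simp}$; only then does uniqueness yield $\tau_\nu^+=\tau_\nu$ and hence $F^+=F$. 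This check is indeed routine (it amounts to $q^+_{a,b}=\overline{q_{b,a}}$, giving $\sigma_\nu^+=(-1)^\nu\overline{\sigma_\nu}$), but it is the step that makes the uniqueness invocation legitimate.
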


\begin{remark}
For even $n$, the conditions of Theorem~\ref{thm:nsc} are necessary and sufficient. Indeed, by necessity, the condition \eqref{addhyp} for even $n$ holds by necessity by virtue of Lemma~\ref{lem:positive} and a model problem can be chosen as $\tilde \tau_{\nu} := \tau_{\nu}$, $\nu = \overline{0,n-2}$. For odd $n$, the only ``gap'' between necessary and sufficient conditions is the requirement $(-1)^{p+1} \mbox{Re} \, \la_{l,p} > 0$, which plays an important role in the proof of Theorem~\ref{thm:inverse}.
\end{remark}

Note that the assumption $\{ l^{n-2} \xi_l \} \in l_2$ implies the asymptotics \eqref{asymptla} and \eqref{asymptbe} for $\{ \la_{l,k}, \be_{l,k} \}_{l \ge 1, \, k= \overline{1,n-1}}$, because the similar asymptotics hold for $\{ \tilde \la_{l,k}, \tilde \be_{l,k} \}_{l \ge 1, \, k = \overline{1,n-1}}$. However, the condition $\{ l^{n-2} \xi_l \} \in l_2$ is more strong. In order to achieve it, one has to find a model problem with the coefficients $\tilde c_{j,k} = c_{j,k}$ and $\tilde d_{j,k} = d_{j,k}$ in the sharp asymptotics
\begin{align*}
\la_{l,k} & = l^n \left(c_{0,k} + c_{1,k} l^{-1} + c_{2,k} l^{-2} + \dots + c_{n-1,k} l^{-(n-1)} + l^{-(n-1)} \varkappa_{l,k} \right), \\
\be_{l,k} & = -\la_{l,k} \left(1 + d_{1,k} l^{-1} + d_{2,k} l^{-2} + \dots + d_{n-2,k} l^{-(n-2)} + l^{-(n-2)} \varkappa_{l,k}^0\right),
\end{align*}
where $\{\varkappa_{l,k} \}, \{ \varkappa_{l,k}^0 \} \in l_2$. This task is explicitly solved for $n = 2, 3, 4$ in the next section. But for higher orders, it becomes very technically complicated.

\section{Examples} \label{sec:ex}

In this section, we consider Inverse Problem~\ref{ip:tau} for $n = 2, 3, 4$ and $\{ \tau_{\nu} \}_{\nu = 0}^{n-2} \in W_{simp}^+$. We obtain the corollaries of Theorem~\ref{thm:nsc} on the spectral data characterization for these cases. For $n = 2$ and $n = 3$, our results coincide with the results of \cite{HM03} and \cite{Bond23-res}, respectively. For $n = 4$, our result (Theorem~\ref{thm:nsc4}) is novel.

\subsection{Second order}

For $n = 2$, equation \eqref{eqv} turns into the Sturm-Liouville equation
\begin{equation} \label{StL2}
y'' + \tau_0 y = \la y, \quad x \in (0, 1),
\end{equation}
where $\tau_0$ is a real-valued potential of $W_2^{-1}[0,1]$. Then, we have the only problem $\mathcal L_1$ with the Dirichlet boundary conditions 
\begin{equation} \label{dir}
y(0) = y(1) = 0.
\end{equation}
It is well-known (see, e.g., \cite{HM03}) that the corresponding eigenvalues $\la_{l,1} =: \la_l$ are real and simple. Furthermore, $\be_l := \be_{l,1} = \left( \int_0^1 y_l^2(x) \, dx \right)^{-1}$, where $\{ y_l(x) \}_{l \ge 1}$ are the eigenfunctions of the problem $\mathcal L_1$, normalized by the condition $y_l^{[1]}(0) = 1$. The asymptotics \eqref{asymptla} and \eqref{asymptbe} take the form
\begin{equation} \label{asympt2}
\la_l = -(\pi l + \varkappa_l)^2, \quad \be_l = 2 (\pi l)^2 (1 + \varkappa_l^0), \quad l \ge 1, \quad \{ \varkappa_l \}, \, \{\varkappa_l^0 \} \in l_2.
\end{equation}
Therefore, choosing any real-valued model potential $\tilde \tau_0 \in W_2^{-1}[0,1]$, we get $\{ \tilde \tau_0 \} \in W_{simp}^+$ and $\{ \xi_l \} \in l_2$. Hence, Theorem~\ref{thm:nsc} implies the following corollary, which is equivalent to the spectral data characterization in \cite{HM03}.

\begin{cor}
For numbers $\{ \la_l, \be_l \}_{l \ge 1}$ to be the spectral data of the Sturm-Liouville problem \eqref{StL2}--\eqref{dir} with a real-valued potential $\tau_0 \in W_2^{-1}[0,1]$, it is necessary and sufficient to satisfy the asymptotics \eqref{asympt2} and the conditions
$$
\la_l \in \mathbb R, \quad \la_l \ne \la_{l_0} \: (l \ne l_0), \quad \be_l > 0, \quad l \ge 1.
$$
\end{cor}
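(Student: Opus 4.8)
The plan is to derive this corollary directly from Theorem~\ref{thm:nsc} specialized to $n = 2$, for which $p = 1$ and the index $k$ ranges over the single value $k = 1$. Consequently assumption (A-2) is vacuous, the self-adjointness relation \eqref{sasd} collapses to $\la_l = \overline{\la_l}$ and $\be_l = \overline{\be_l}$ (reality of the data), and the extra hypothesis \eqref{addhyp} for even order reads $(-1)^{p+1}\be_{l,p} = \be_l > 0$. Thus the abstract list of hypotheses in Theorem~\ref{thm:nsc} translates precisely into the concrete conditions stated in the corollary, and the whole task reduces to a careful matching of the two formulations together with the construction of a suitable model problem.

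For sufficiency, I would start from numbers $\{\la_l,\be_l\}$ obeying \eqref{asympt2} and the listed conditions, and verify the hypotheses of Theorem~\ref{thm:nsc} one by one: simplicity (A-1) is the assumed distinctness $\la_l \ne \la_{l_0}$; the reality $\la_l \in \mathbb R$ yields the eigenvalue part of \eqref{sasd}; and $\be_l > 0$ supplies simultaneously the weight-number part of \eqref{sasd}, the positivity \eqref{addhyp} with $p = 1$, and the nonvanishing $\be_l \ne 0$. It then remains to produce a model problem in $W_{simp}^+$ with $\{ l^{n-2}\xi_l \}_{l\ge 1} = \{\xi_l\}_{l\ge 1} \in l_2$. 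Here I would take any fixed real-valued $\tilde\tau_0 \in W_2^{-1}[0,1]$ (for instance $\tilde\tau_0 = 0$); its spectral data are real, simple, and obey the same asymptotics \eqref{asympt2} with $l_2$-remainders, so $\{\tilde\tau_0\} \in W_{simp}^+$. Expressing the differences $\la_l - \tilde\la_l$ and $\be_l - \tilde\be_l$ through those remainders and inserting them into the definition \eqref{defxi} (with $n = 2$) shows that the negative powers of $l$ cancel the polynomial growth, giving $\{\xi_l\} \in l_2$. Theorem~\ref{thm:nsc} then delivers a unique $\tau_0 \in W_{simp}^+$ with the prescribed data.

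For necessity, suppose $\{\la_l,\be_l\}$ are the spectral data of \eqref{StL2}--\eqref{dir} with real $\tau_0 \in W_2^{-1}[0,1]$. The associated matrix \eqref{F2} belongs to $\mathfrak F_2^+$; the asymptotics \eqref{asymptla}--\eqref{asymptbe} specialize to \eqref{asympt2} and guarantee simplicity of $\{\la_l\}$ for large $l$, while reality and distinctness of all the eigenvalues are the well-known facts of self-adjoint Sturm-Liouville theory, so that in fact $\{\tau_0\}\in W_{simp}^+$ and $F \in \mathfrak F_{2,simp}^+$. Reality of $\la_l$ and $\be_l$ is then \eqref{sasd}, and the positivity $\be_l > 0$ is exactly Lemma~\ref{lem:positive} with $p = 1$.

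I expect the only non-bookkeeping step to be the construction of the model problem and the verification $\{\xi_l\}\in l_2$, together with arranging the technical disjointness \eqref{difla} (which can be secured by a small shift of $\tilde\tau_0$, or absorbed into the ``minor changes'' remark following \eqref{difla}). For $n = 2$ this step is genuinely routine because the weight $l^{n-2}$ is trivial; it is precisely the higher-order analogue of this $l_2$-estimate, demanding sharp asymptotics and a matching model, that makes the $n = 3, 4$ examples substantially harder.
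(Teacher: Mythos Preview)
Your proposal is correct and follows essentially the same approach as the paper: specialize Theorem~\ref{thm:nsc} to $n=2$, observe that the abstract hypotheses collapse to the stated conditions, and obtain $\{\xi_l\}\in l_2$ by picking any real-valued model potential $\tilde\tau_0\in W_2^{-1}[0,1]$. The paper's argument is just a terser version of yours, citing \cite{HM03} for the necessity of reality and simplicity rather than spelling out the associated-matrix route via $\mathfrak F_{2,simp}^+$.
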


\subsection{Third order}

For $n = 3$, equation \eqref{eqv} takes the form
$$
y''' + (\tau_1 y)' + \tau_1 y' + \tau_0 y = \la y, \quad x \in (0,1),
$$
where the functions $i \tau_0(x)$ and $\tau_1(x)$ are real-valued, $\tau_0 \in W_2^{-1}[0,1]$, $\tau_1 \in L_2[0,1]$. Then, we have two boundary value problems:
\begin{align*}
\mathcal L_1 \colon & \quad y(0) = 0, \quad y(1) = y'(1) =0, \\
\mathcal L_2 \colon & \quad y(0)=y'(0) = 0, \quad y(1) = 0,
\end{align*}
and the corresponding spectral data satisfy the following asymptotics (see \cite{Bond23-res}):
\begin{equation} \label{asymptla3}
        \la_{l,k} = (-1)^{k+1} \left( \frac{2\pi}{\sqrt 3} \Bigl( l + \frac{1}{6} - \frac{\theta}{2\pi^2 n} + \frac{\varkappa_l}{l}  \Bigr)\right)^3, \quad \beta_{l,k} = -3 \la_{l,k} \left( 1 + \frac{\varkappa_l^0}{l}\right), 
\end{equation}
where $l \ge 1$, $k = 1, 2$,
$\theta = \int\limits_0^1 \tau_1(x) \, dx$, $\{ \varkappa_l \}, \{ \varkappa_l^0 \} \in l_2$. Obviously, the coefficient $\theta$ can be found from the eigenvalue asymptotics. By choosing a model problem $\{ \tilde \tau_0, \tilde \tau_1 \} \in W_{simp}^+$ with $\int_0^1 \tilde \tau_1(x) \, dx = \theta$, we achieve $\{ l \xi_l \}_{l \ge 1} \in l_2$. Consequently, Theorem~\ref{thm:nsc} implies the following corollary, which is a special case of Theorem~2.5 in \cite{Bond23-res}.

\begin{cor}
Let complex numbers $\{ \la_{l,k}, \be_{l,k} \}_{l \ge 1, \, k =1, 2}$ satisfy the assumptions (A-1), (A-2), $\la_{l,1}= -\overline{\la_{l,2}}$, $\be_{l,1} = -\overline{\be_{l,2}}$, $\mbox{Re}\, \la_{l,1} > 0$, $\be_{l,1} \ne 0$ for $l \ge 1$ and the asymptotics \eqref{asymptla3} with a real coefficient $\theta$. Then, there exists a unique solution $\{\tau_0, \tau_1 \}$ of Inverse Problem~\ref{ip:tau} with the spectral data $\{ \la_{l,k}, \be_{l,k} \}_{l \ge 1, \, k =1, 2}$.
\end{cor}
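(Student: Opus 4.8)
The plan is to derive this corollary directly from Theorem~\ref{thm:nsc} specialized to $n = 3$ (so $n = 2p+1$ with $p = 1$), so the work splits into verifying that the listed hypotheses coincide with those of that theorem and then constructing a suitable model problem. First I would translate the general conditions \eqref{sasd} and \eqref{addhyp}. For $n = 3$ the self-adjointness conditions \eqref{sasd} read $\la_{l,1} = -\overline{\la_{l,2}}$ and $\be_{l,1} = -\overline{\be_{l,2}}$, which are among the assumptions; in particular $\be_{l,2} = -\overline{\be_{l,1}} \ne 0$ follows from $\be_{l,1} \ne 0$, so $\be_{l,k} \ne 0$ holds for both $k = 1, 2$. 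The additional requirement \eqref{addhyp} for odd $n$ becomes $(-1)^{p+1}\mbox{Re}\,\la_{l,p} = \mbox{Re}\,\la_{l,1} > 0$, which is assumed. Thus (A-1), (A-2), \eqref{sasd}, \eqref{addhyp}, and the nonvanishing of the weight numbers are all in place.

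The heart of the argument is to exhibit a model problem with coefficients $\{\tilde\tau_0, \tilde\tau_1\} \in W_{simp}^+$ for which $\{l^{n-2}\xi_l\}_{l \ge 1} = \{l\,\xi_l\}_{l \ge 1} \in l_2$. The quantity $\theta = \int_0^1 \tau_1(x)\,dx$ is the only problem-dependent constant appearing in the leading part of the eigenvalue asymptotics \eqref{asymptla3}, so I would choose the model so that its analogue of $\theta$ equals the given $\theta$. The simplest candidate is the constant-coefficient model $\tilde\tau_1 \equiv \theta$, $\tilde\tau_0 \equiv 0$: since $\theta$ is real and $i^{3}\tilde\tau_0 = 0$ is real-valued, this model lies in the self-adjoint class, and its spectral data obey \eqref{asymptla3} with the same $\theta$ and with its own remainders $\{\tilde\varkappa_l\}, \{\tilde\varkappa_l^0\} \in l_2$.

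Granting such a model, I would verify $\{l\,\xi_l\} \in l_2$ by direct subtraction. Once the common value of $\theta$ is fixed, both $\{\la_{l,k}\}$ and $\{\tilde\la_{l,k}\}$ share the identical polynomial part $(-1)^{k+1}(2\pi/\sqrt3)^3\bigl(l + \tfrac16 - \tfrac{\theta}{6\pi^2}\bigr)^3$, so the mean value theorem gives $|\la_{l,k} - \tilde\la_{l,k}| \le C\,l\,|\varkappa_l - \tilde\varkappa_l|$ and, using $\be_{l,k} = -3\la_{l,k}(1 + \varkappa_l^0/l)$, also $|\be_{l,k} - \tilde\be_{l,k}| \le C\bigl(l\,|\varkappa_l - \tilde\varkappa_l| + l^2\,|\varkappa_l^0 - \tilde\varkappa_l^0|\bigr)$. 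Substituting these into the definition \eqref{defxi} with $n = 3$ yields
$$
l\,\xi_l \le C\sum_{k=1}^{2}\bigl(|\varkappa_l - \tilde\varkappa_l| + |\varkappa_l^0 - \tilde\varkappa_l^0|\bigr),
$$
whose right-hand side lies in $l_2$ because all four remainder sequences do. Theorem~\ref{thm:nsc} then applies and produces the unique solution $\{\tau_0, \tau_1\} \in W_{simp}^+$ of Inverse Problem~\ref{ip:tau}, which is the assertion.

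The step I expect to be the main obstacle is guaranteeing that the chosen model genuinely belongs to $W_{simp}^+$, i.e. that its eigenvalues are simple (A-1) and that the spectra of $\mathcal L_1$ and $\mathcal L_2$ are disjoint (A-2). The constant-coefficient equation $y''' + 2\theta y' = \la y$ can be analyzed explicitly, but for exceptional values of $\theta$ a multiple eigenvalue or a shared point between the two spectra cannot be excluded a priori. I would handle this by a perturbation argument: replace $\tilde\tau_1$ by $\theta + \epsilon g$ with a real-valued $g \in L_2[0,1]$ of zero mean (so that $\int_0^1\tilde\tau_1 = \theta$ is preserved), or switch on a small coefficient $\tilde\tau_0$ with $i^{3}\tilde\tau_0$ real, and invoke genericity of the simplicity and separation conditions under such perturbations. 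Since the perturbation keeps $\tilde\theta = \theta$, the estimate above is unaffected and $\{l\,\xi_l\} \in l_2$ is retained, which completes the verification of the hypotheses of Theorem~\ref{thm:nsc}.
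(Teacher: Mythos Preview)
Your proposal is correct and follows essentially the same route as the paper: specialize Theorem~\ref{thm:nsc} to $n=3$, verify that the stated hypotheses match \eqref{sasd}, \eqref{addhyp}, and $\be_{l,k}\ne 0$, and then construct a model problem $\{\tilde\tau_0,\tilde\tau_1\}\in W_{simp}^+$ with $\int_0^1\tilde\tau_1=\theta$ so that $\{l\,\xi_l\}\in l_2$. The only cosmetic difference is that, for the $W_{simp}$ issue, the paper (in the analogous $n=4$ argument) perturbs finitely many model eigenvalues directly rather than perturbing the coefficients, but this does not affect the substance.
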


Note that in \cite{Bond23-res} the inverse problem has been investigated in a more general form, when the assumption (A-2) can be violated.

\subsection{Fourth order}

Consider equation \eqref{eqv} for $n = 4$:
\begin{equation} \label{eqv4}
y^{(4)} + (\tau_2(x) y')' + (\tau_1(x) y)' + \tau_1(x) y' + \tau_0(x) y = \la y(x), \quad x \in (0, 1),
\end{equation}
where $\{ \tau_0, \tau_1, \tau_2 \} \in W_{simp}^+$. This means
$\tau_0 \in W_2^{-1}[0,1]$, $\tau_1 \in L_2[0,1]$, $\tau_2 \in W_2^1[0,1]$, the functions $\tau_0$, $i \tau_1$, and $\tau_2$ are real-valued. The spectral data $\{ \la_{l,k}, \be_{l,k} \}_{l \ge 1, \, k = 1, 2, 3}$ are associated with the boundary value problems $\mathcal L_k$, $k = 1, 2, 3$, for equation \eqref{eqv4} with the following boundary conditions:
\begin{align*} 
    \mathcal L_1 \colon & \quad y(0) = 0, \quad y(1) = y'(1) = y''(1) = 0, \\
    \mathcal L_2 \colon & \quad y(0) = y'(0) = 0, \quad y(1) = y'(1) = 0, \\ 
    \mathcal L_3 \colon & \quad y(0) = y'(0) = y''(0) = 0, \quad y(1) = 0.    
\end{align*}

Theorem~\ref{thm:nsc} and the results of \cite{Bond23-asympt4} together imply the following theorem on the spectral data characterization for the fourth-order equation \eqref{eqv4}.

\begin{thm} \label{thm:nsc4}
For complex numbers $\{ \la_{l,k}, \be_{l,k} \}_{l \ge 1, \, k = 1, 2, 3}$ to be the spectral data of $\{ \tau_0, \tau_1, \tau_2 \} \in W_{simp}^+$, it is necessary and sufficient to fulfill the conditions (A-1), (A-2), \eqref{sasd}, $\be_{l,2} < 0$ and $\be_{l,2\pm 1} \ne 0$ for all $l \ge 1$, and the asymptotic relations
\begin{align} \label{asymptla13}
\la_{l,2\pm 1} = - \biggl( & \Bigl( \sqrt 2 \pi l + \frac{\pi}{2 \sqrt 2}\Bigr)^4 - \theta \Bigl( \sqrt 2 \pi l + \frac{\pi}{2 \sqrt 2}\Bigr)^2  - \frac{t_0 + t_1 \mp 4\sigma}{\sqrt 2} \Bigl( \sqrt 2 \pi l + \frac{\pi}{2 \sqrt 2}\Bigr) + l \varkappa_{l,2\pm 1} \biggr), \\ \label{asymptla2}
\la_{l,2} = \Bigl( \pi l & + \frac{\pi}{2} \Bigr)^4 - \theta \Bigl( \pi l + \frac{\pi}{2} \Bigr)^2 + (t_0 + t_1) \Bigl( \pi l + \frac{\pi}{2} \Bigr) + l \varkappa_{l,2}, \\ \label{asymptbe4}
\be_{l,2\pm 1} = -4 & \la_{l,2\pm 1} \left( 1 + \frac{t_0 + \theta}{ 8(\pi l)^2}  + \frac{\varkappa_{l,2 \pm 1}^0}{l^2}\right), \quad
\be_{l,2} = -4 \la_{l,2} \left( 1 + \frac{t_0 + 2 \theta}{4 (\pi l)^2} + \frac{\varkappa_{l,2}^0}{l^2}\right),
\end{align}
where $\{ \varkappa_{l,k} \}, \, \{ \varkappa_{l,k}^0 \} \in l_2$ and
$$
\theta = \int_0^1 \tau_2(x) \, dx, \quad t_0 = \tau_2(0), \quad t_1 = \tau_2(1), \quad \sigma = \int_0^1 \tau_1(x) \, dx.
$$
\end{thm}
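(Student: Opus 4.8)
The plan is to derive Theorem~\ref{thm:nsc4} as the concrete $n=4$ instance of Theorem~\ref{thm:nsc}, with the sharp asymptotics \eqref{asymptla13}--\eqref{asymptbe4} supplied by \cite{Bond23-asympt4}, treating necessity and sufficiency separately. For necessity, suppose $\{\la_{l,k},\be_{l,k}\}$ are the spectral data of some $\{\tau_0,\tau_1,\tau_2\}\in W_{simp}^+$. Then (A-1) and (A-2) hold by the definition of $W_{simp}$; the self-adjointness relations \eqref{sasd} hold because the associated matrix $F$ built by \eqref{fkj} lies in $\mathfrak F_{4,simp}^+$, as recorded just before Theorem~\ref{thm:nsc}; and $\be_{l,2}<0$ is exactly condition \eqref{addhyp} for $n=4$, $p=2$, which follows from Lemma~\ref{lem:positive}. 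The non-vanishing $\be_{l,2\pm1}\ne0$ is part of \eqref{structN}--\eqref{betaM}, and the asymptotics \eqref{asymptla13}--\eqref{asymptbe4} are precisely the higher-order eigenvalue and weight-number asymptotics derived in \cite{Bond23-asympt4} for this class. Thus every listed condition is necessary.

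For sufficiency, the only nontrivial hypothesis of Theorem~\ref{thm:nsc} to be checked is the existence of a model problem $\{\tilde\tau_0,\tilde\tau_1,\tilde\tau_2\}\in W_{simp}^+$ with $\{l^2\xi_l\}_{l\ge1}\in l_2$ (here $n-2=2$). First I would read off from the given data the four real parameters $\theta$, $t_0$, $t_1$, $\sigma$: the expansion \eqref{asymptla2} yields $-\theta$ as the coefficient of $(\pi l+\tfrac{\pi}{2})^2$ and $t_0+t_1$ as that of $(\pi l+\tfrac{\pi}{2})$; the difference of the two expansions in \eqref{asymptla13} (the $+$ and $-$ cases) isolates $\sigma$; and the weight-number expansions \eqref{asymptbe4} deliver $t_0$, whence $t_1$, once $\theta$ is known. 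I would then construct a model triple realizing these four numbers --- for instance take $\tilde\tau_0:=0$, a purely imaginary $\tilde\tau_1\in L_2[0,1]$ with $\int_0^1\tilde\tau_1=\sigma$ so that $i\tilde\tau_1$ is real-valued, and a real-valued $\tilde\tau_2\in W_2^1[0,1]$ (e.g.\ a quadratic polynomial) with $\tilde\tau_2(0)=t_0$, $\tilde\tau_2(1)=t_1$, $\int_0^1\tilde\tau_2=\theta$. Membership $\{\tilde\tau_\nu\}\in W_{simp}^+$ requires (A-1) and (A-2), which hold for all large $l$ by the asymptotics \eqref{asymptla13}--\eqref{asymptla2} and can be arranged for the remaining finitely many indices by a small perturbation of the free parameters of the model that leaves $\theta,t_0,t_1,\sigma$ unchanged.

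Since the model shares the four parameters with the given data, in every expansion \eqref{asymptla13}--\eqref{asymptbe4} all explicit terms cancel in the differences $\la_{l,k}-\tilde\la_{l,k}$ and $\be_{l,k}-\tilde\be_{l,k}$, leaving only the $l_2$ remainders; a short estimate gives $\la_{l,k}-\tilde\la_{l,k}=O(l)\cdot l_2$ and $\be_{l,k}-\tilde\be_{l,k}=O(l^2)\cdot l_2$. Substituting into the definition \eqref{defxi} of $\xi_l$ with $n=4$ (so the weights are $l^{-3}$ and $l^{-4}$) yields $l^{-2}\cdot l_2$ in each summand, hence $\{l^2\xi_l\}\in l_2$ as required. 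Theorem~\ref{thm:nsc} then produces a unique $\{\tau_\nu\}_{\nu=0}^{2}\in W_{simp}^+$ with the prescribed spectral data, which completes the sufficiency direction.

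I expect the main obstacle to be the explicit construction and justification of the model problem: one must produce coefficients in the correct Sobolev classes that simultaneously hit the prescribed $\theta,t_0,t_1,\sigma$, respect the parity requirement that $i^{4+\nu}\tilde\tau_\nu$ be real-valued, and whose own spectral data obey the sharp asymptotics \eqref{asymptla13}--\eqref{asymptbe4}, so that matching the four parameters genuinely forces $\{l^2\xi_l\}\in l_2$. This relies on the forward asymptotic analysis of \cite{Bond23-asympt4} applied to the model, together with the delicate verification that (A-1) and (A-2) hold for the model at every index $l$ and not merely asymptotically.
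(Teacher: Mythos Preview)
Your proposal is correct and follows essentially the same route as the paper: necessity via \cite{Bond23-asympt4}, Lemma~\ref{lem:positive}, \eqref{sasd}, and \eqref{structN}; sufficiency by extracting $\theta,t_0,t_1,\sigma$ from the asymptotics, building an explicit model triple (the paper takes $\tilde\tau_0\equiv0$, $\tilde\tau_1\equiv\sigma$, and $\tilde\tau_2$ a quadratic polynomial) so that $\{l^2\xi_l\}\in l_2$, then invoking Theorem~\ref{thm:nsc}. The only cosmetic difference is that the paper writes out explicit limit formulas for the four constants and handles the possible failure of (A-1)--(A-2) for the model by perturbing finitely many of its eigenvalues rather than its free parameters.
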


\begin{proof}
By necessity, the asymptotic relations \eqref{asymptla13}--\eqref{asymptbe4} have been obtained in \cite{Bond23-asympt4} and the conditions \eqref{sasd}, $\be_{l,2} < 0$, and $\be_{l,2\pm1} \ne 0$ follow from Lemma~\ref{lem:simps}, Lemma~\ref{lem:positive}, and the structure of the weight matrices \eqref{structN}, respectively. 

By sufficiency, the asymptotics \eqref{asymptla13}--\eqref{asymptbe4} allow us to construct a model problem $\{ \tilde \tau_0, \tilde \tau_1, \tilde \tau_2 \}$ such that $\{ l^2 \xi_l \} \in l_2$. Indeed, one can successively find the constants 
\begin{align*}
\theta & := \lim_{l \to \infty} \left( \Bigl( \pi l + \frac{\pi}{2} \Bigr)^2 - \la_{l,2} \Bigl( \pi l + \frac{\pi}{2} \Bigr)^{-2}\right), \\
t_0 & := \lim_{l \to \infty} 4 (\pi l)^2 (\be_{l,2} + 4 \la_{l,2}) - 2\theta, \\
t_1 & := \lim_{l \to \infty} \left( \la_{l,2} \Bigl( \pi l + \frac{\pi}{2}\Bigr)^{-1} - \Bigl( \pi l + \frac{\pi}{2}\Bigr)^3 + \theta \Bigl( \pi l + \frac{\pi}{2}\Bigr) \right) -t_0, \\
\sigma & := \lim_{l \to \infty} \frac{\la_{l,1} - \la_{l,3}}{8 \left( \pi l + \frac{\pi}{4} \right)},
\end{align*}
and construct the functions $\{ \tilde \tau_0, \tilde \tau_1, \tilde \tau_2 \}$ such that $\tilde \theta = \theta$, $\tilde t_0 = t_0$, $\tilde t_1 = t_1$, and $\tilde \sigma = \sigma$. For example, put
$$
\tilde \tau_0(x) \equiv 0, \quad \tilde \tau_1(x) \equiv \sigma, \quad \tilde \tau_2(x) = (3 t_0 +  3 t_1 - 6 \theta) x^2 + (-4 t_0 - 2 t_1 + 6 \theta) x + t_0.
$$
Then, the spectral data $\{ \tilde \la_{l,k}, \tilde \be_{l,k} \}_{l \ge 1, \, k = 1, 2,3}$ satisfy the asymptotics with the same main parts as \eqref{asymptla13}--\eqref{asymptbe4}. Hence, the sequences $\{ l^{-1} |\la_{l,k} - \tilde \la_{l,k}| \}$ and $\{ l^{-2} |\be_{l,k} - \tilde \be_{l,k}| \}$ belong to $l_2$. According to \eqref{defxi}, this immediately implies $\{ l^2 \xi_l \} \in l_2$. If $\{ \tilde \tau_0, \tilde \tau_1, \tilde \tau_2\} \not\in W_{simp}$, then one can perturb a finite number of the eigenvalues $\{ \tilde \la_{l,k} \}$ to achieve (A-1) and (A-2), and such perturbation does not influence on the asymptotics. Thus, for any data $\{ \la_{l,k}, \be_{l,k} \}_{l \ge 1, \, k = 1, 2, 3}$ satisfying the conditions of Theorem~\ref{thm:nsc4}, the hypothesis of Theorem~\ref{thm:nsc} is valid, so there exist $\{ \tau_0, \tau_1, \tau_2 \} \in W_{simp}^+$ with the spectral data $\{ \la_{l,k}, \be_{l,k} \}_{l \ge 1, \, k = 1, 2, 3}$. 
\end{proof}

\medskip

{\bf Funding.} This work was supported by Grant 21-71-10001 of the Russian Science Foundation, https://rscf.ru/en/project/21-71-10001/.

\medskip

\noindent Natalia Pavlovna Bondarenko \\

Department of Mechanics and Mathematics, Saratov State University, \\
Astrakhanskaya 83, Saratov 410012, Russia, \\

\noindent e-mail: {\it bondarenkonp@info.sgu.ru}


\begin{thebibliography}{99}


\bibitem{Mar77}
Marchenko, V.A. Sturm-Liouville Operators and Their Applications, Birkh\"auser, Basel (1986).

\bibitem{Lev84}
Levitan, B.M. Inverse Sturm-Liouville Problems, VNU Sci. Press, Utrecht (1987).

\bibitem{PT87}
P\"{o}schel, J.; Trubowitz, E. Inverse Spectral Theory, Academic Press, New York (1987).

\bibitem{FY01}
Freiling, G.; Yurko, V. Inverse Sturm-Liouville Problems and Their Applications, Nova Science Publishers, Huntington, NY (2001).

\bibitem{Krav20}
Kravchenko, V.V. Direct and Inverse Sturm-Liouville Problems, Birkh\"auser, Cham (2020).

\bibitem{GL51}
Gel'fand, I.M.; Levitan, B.M. On the determination of a differential equation from its spectral function, Izv. Akad. Nauk SSSR, Ser. Mat. 15 (1951), 309-360 [in Russian].



\bibitem{Leib66}
Leibenson, Z.L. The inverse problem of spectral analysis for higher-order ordinary differential operators, Trans. Moscow Math. Soc. 15 (1966), 78-163.

\bibitem{Leib71}
Leibenson, Z.L. Spectral expansions of transformations of systems of boundary value problems, Trudy Moskov. Mat. Obshch. 25 (1971), 15--58 [in Russian].

\bibitem{Leib72}
Leibenzon, Z.L. Algebraic-differential transformations of linear differential operators of arbitrary order and their spectral properties applicable to the inverse problem, Math. USSR-Sb. 18 (1972), no.~3, 425--471.

\bibitem{Yur92}
Yurko, V.A. Recovery of nonselfadjoint differential operators on the half-line from the Weyl matrix, Math. USSR-Sb. 72 (1992), no.~2, 413--438.

\bibitem{Yur00}
Yurko, V.A.	Inverse problems of spectral analysis for differential operators and their applications, J. Math. Sci. 98 (2000), no.~3, 319--426.

\bibitem{Yur02}
Yurko, V. A. Method of Spectral Mappings in the Inverse Problem Theory, Inverse and Ill-Posed Problems Series, Utrecht, VNU Science (2002).


\bibitem{Beals85}
Beals, R. The inverse problem for ordinary differential operators on the line, American J. Math. 107 (1985), no.~2, 281--366.

\bibitem{Beals88}
Beals, R.; Deift, P.; Tomei, C. Direct and Inverse Scattering on the Line, Mathematical Surveys and Monographs, Vol. 28, Providence, AMS (1988).


\bibitem{Bar74}
Barcilon, V. On the uniqueness of inverse eigenvalue problems, Geophys. J. Intern. 38 (1974), no.~2, 287--298.

\bibitem{Khach76}
Khachatryan, I.G. Reconstruction of a differential equation from the spectrum, Funct. Anal. Appl. 10 (1976), no.~1, 83--84.

\bibitem{McK81}
McKean, H. Boussinesq's equation on the circle, Comm. Pure Appl. Math. 34 (1981), no. 5, 599--691.

\bibitem{McL86}
McLaughlin, J.R. Analytical methods for recovering coefficients in differential equations from spectral data, SIAM Review 28 (1986), no.~1, 53--72.

\bibitem{PK97}
Papanicolaou, V.G.; Kravvaritisz, D. An inverse spectral problem for the Euler-Bernoulli equation for the vibrating beam, Inverse Problems 13 (1997), 1083--1092.

\bibitem{CPS98}
Caudill, L.F.; Perry, P.A.; Schueller, A.W. Isospectral sets for fourth-order ordinary differential operators, SIAM J. Math. Anal. 29 (1998), no.~4, 935--966.

\bibitem{Glad05}
Gladwell, G.M.L. Inverse Problems in Vibration, Second Edition, Solid Mechanics and Its Applications, Vol. 119, Springer, Dordrecht (2005).

\bibitem{BK15}
Badanin, A.; Korotyaev, E. Inverse problems and sharp eigenvalue asymptotics for Euler-Bernoulli operators, Inverse Problems 31 (2015), 055004.

\bibitem{BK21} 
Badanin, A.; Korotyaev, E.L. Third-order operators with three-point conditions associated with Boussinesq's equation, Appl. Anal. 100 (2021), no.~3, 527--560.

\bibitem{PB20}
Perera, U.; B\"ockmann, C. Solutions of Sturm-Liouville problems, Mathematics 8 (2020), no.~11, 1--14, 2074.


\bibitem{Bond21}
Bondarenko, N.P. Inverse spectral problems for arbitrary-order differential operators with distribution coefficients, Mathematics 9 (2021), no. 22, Article ID 2989.

\bibitem{Bond22-alg}
Bondarenko, N.P. Reconstruction of higher-order differential operators by their spectral data, Mathematics 10 (2022), no. 20, Article ID 3882.

\bibitem{Bond23-mmas}
Bondarenko, N.P. Linear differential operators with distribution coefficients of various singularity orders, Math. Meth. Appl. Sci. 46 (2023), no. 6, 6639-6659.

\bibitem{Bond23-reg}
Bondarenko, N.P. Regularization and inverse spectral problems for differential operators with distribution coefficients, Mathematics 11 (2023), no.~16, Article ID 3455.

\bibitem{Bond23-loc}
Bondarenko, N.P. Local solvability and stability of an inverse spectral problem for higher-order differential operators, Mathematics 11 (2023), no. 18, Article ID 3818.

\bibitem{Bond23-res}
Bondarenko, N.P. Inverse spectral problem for the third-order differential equation, Results Math. 78 (2023), Article number: 179.


\bibitem{Greg87}
Gregu\v{s}, M. Third Order Linear Differential Equations, Springer, Dordrecht (1987).

\bibitem{BP96}
Bernis, F.; Peletier, L.A. Two problems from draining flows involving third-order ordinary differential equations, SIAM J. Math. Anal. 27 (1996), no. 2, 515--527.

\bibitem{MZ13}
M\"oller, M.; Zinsou, B. Sixth order differential operators with eigenvalue dependent boundary conditions, Appl. Anal. Disc. Math. 7 (2013), no.~2, 378--389.


\bibitem{MS16}
Mirzoev, K.A.; Shkalikov, A.A. Differential operators of even order with distribution coefficients, Math. Notes 99 (2016), no.~5, 779--784.

\bibitem{EM99}
Everitt, W.N.; Marcus, L. Boundary Value Problems and Symplectic Algebra for Ordinary Differential and Quasi-Differential Operators, AMS, Mathematical Surveys and Monographs, vol. 61, 1999.

\bibitem{MS19}
Mirzoev, K.A.; Shkalikov, A.A. Ordinary differential operators of odd order with distribution coefficients, preprint (2019), arXiv:1912.03660 [math.CA].

\bibitem{Vlad04}
Vladimirov, A.A. On the convergence of sequences of ordinary differential equations, Math. Notes 75 (2004), no.~6, 877--880.

\bibitem{Vlad17}
Vladimirov, A.A. On one approach to definition of singular differential operators, arXiv:1701.08017 [math.SP].

\bibitem{VNS21}
Valeev, N.F.; Nazirova, E.A.; Sultanaev, Y.T. On a method for studying the asymptotics of solutions of odd-order differential equations with oscillating coefficients, Math Notes 109 (2021), no.~6, 980--985.

\bibitem{KMS23}
Konechnaja, N.N.; Mirzoev, K.A.; Shkalikov, A.A. Asymptotics of solutions of two-term differential equations, Math. Notes 113 (2023), no.~2, 228--242.

\bibitem{SS03}
Savchuk, A.M.; Shkalikov, A.A. Sturm-Liouville operators with
distribution potentials, Transl. Moscow Math. Soc. 64
(2003), 143--192.


\bibitem{Bond21-tamkang}
Bondarenko, N.P. Solving an inverse problem for the Sturm-Liouville operator with singular potential by Yurko's method, Tamkang J. Math. 52 (2021), no.~1, 125-154.

\bibitem{HM03}
Hryniv, R.O.; Mykytyuk, Y.V. Inverse spectral problems for Sturm-Liouville operators with singular potentials, Inverse Problems 19 (2003), no.~3, 665--684.

\bibitem{Bond23-asympt4}
Bondarenko, N.P. Spectral data asymptotics for fourth-order boundary value problems, arXiv:2310.13964 [math.SP].

\bibitem{Bond22-asympt}
Bondarenko, N.P. Spectral data asymptotics for the higher-order differential operators with distribution coefficients, J. Math. Sci. 266 (2022), no. 5, 794--815.
\end{thebibliography}
\end{document}